\numberwithin{equation}{section}
\def\p{\partial}
\def\o{\overline}
\def\b{\bar}
\def\mb{\mathbb}
\def\mc{\mathcal}
\def\n{\nabla}
\theoremstyle{plain}
\newtheorem{thm}{Theorem}[section]
\newtheorem{lemma}[thm]{Lemma}
\newtheorem{prop}[thm]{Proposition}
\newtheorem{cor}[thm]{Corollary}
\theoremstyle{definition}
\newtheorem{rem}[thm]{Remark}
\theoremstyle{definition}
\newtheorem{defn}[thm]{Definition}
\newcommand{\comment}[1]{}
\newenvironment{aligns}{\equation\aligned}{\endaligned\endequation}
\begin{document}

\title{PLURISUBHARMONICITY AND GEODESIC CONVEXITY
 OF ENERGY FUNCTION ON TEICHM\"ULLER SPACE
}

\makeatletter

\makeatother
\author{InKang Kim}
\author{Xueyuan Wan}
\author{Genkai Zhang}

\address{Inkang Kim: School of Mathematics, KIAS, Heogiro 85, Dongdaemun-gu Seoul, 130-722, Republic of Korea}
\email{inkang@kias.re.kr}

\address{Xueyuan Wan: Mathematical Sciences, Chalmers University of Technology and Mathematical Sciences, G\"oteborg University, SE-41296 G\"oteborg, Sweden}
\email{xwan@chalmers.se}

\address{Genkai Zhang: Mathematical Sciences, Chalmers University of Technology and Mathematical Sciences, G\"oteborg University, SE-41296 G\"oteborg, Sweden}
\email{genkai@chalmers.se}

\begin{abstract}
  Let $\pi:\mc{X}\to \mc{T}$ be  Teichm\"uller curve over  Teichm\"uller space $\mc{T}$,  such that the fiber $\mc{X}_z=\pi^{-1}(z)$ is exactly the Riemann surface given by the complex structure $z\in \mc{T}$. For a fixed Riemannian manifold $M$ and a continuous map $u_0: M\to \mc{X}_{z_0}$, 
let $E(z)$ denote the energy function
of the harmonic map $u(z):M\to \mc{X}_z$ homotopic to $u_0$,
$z\in \mathcal T$.
We obtain the first and the second variations of the energy function $E(z)$, and show that $\log E(z)$ is strictly plurisubharmonic  on   Teichm\"uller space, from which we give a new proof on the Steinness of Teichm\"uller space. We also obtain a precise formula on the second variation of $E^{1/2}$ if $\dim M=1$. In particular, we get the
 formula of Axelsson-Schumacher on the second variation of the geodesic length function. 
   We give also a simple and corrected proof for the theorem of Yamada, the convexity of energy function $E(t)$ along Weil-Petersson geodesics. As an application  we show that $E(t)^c$ is also strictly convex for $c>5/6$ and convex for $c=5/6$
along  Weil-Petersson geodesics. We also reprove a Kerckhoff's theorem which is a positive answer to the Nielsen realization problem. 
 \end{abstract}
\maketitle
\tableofcontents

\section*{Introduction}
Teichm\"uller space is one of the most studied objects in mathematics. It carries several natural metrics like Teichm\"uller metric, Weil-Petersson metric, Lipschitz metric etc. The Weil-Petersson metric is K\"ahler but not complete. Cheng and Yau \cite{CY} showed that there is a unique complete K\"ahler-Einstein metric on Teichm\"uller space with  constant negative scalar curvature. In this paper we shall use the Weil-Petersson metric to study  convexity
of certain energy  functionals  along
geodesics, and we study  
also the convexity with respect to the complex coordinates, 
namely the  plurisubharmonicty.

 There are many interesting and geometrically
 defined functions on Teichm\"uller space and the most
studied one might be the geodesic length function.
The geodesic length function $l(\gamma)=l(\gamma, g)$
of a closed curve $\gamma$ indeed is a well-defined
function of the hyperbolic metric $g$
corresponding to a complex structure $z\in \mathcal T$.
Kerckhoff showed in \cite{Kerockhoff}
 that for a finite  number of closed geodesics, which fill up a
 Riemann surface, the sum of the geodesic length functions provides a
 proper exhaustion of the corresponding Teichm\"uller space, and
 that the sum of length functions along any earthquake path is strictly convex. 
Wolpert \cite{Wol1, Wol2, Wol3} proved that $l(\gamma)$
 is actually convex along Weil-Petersson geodesics and plurisubharmonic, and the logarithm of a sum of geodesic length functions is also plurisubharmonic. In \cite{Wolf}, Wolf  presented  a precise formula for the second derivative of 
$l(\gamma)$  along a Weil-Petersson geodesic. By using the methods of K\"ahler geometry, Axelsson and Schumacher \cite{AS0, AS} obtained the formulas for the first and the second variation of $l(\gamma)$, and proved that its
 logarithm $\log l(\gamma)$ is strictly plurisubharmonic.

A natural generalization of the length function
is the energy function of a harmonic map. Let $\Sigma$ be a closed  surface, $M$ a Riemannian manifold of Hermitian non-positive curvature, $u_0:\Sigma\to M$ a continuous map.  Toledo \cite{Toledo} considered the energy function on Teichm\"uller space of $\Sigma$ that assigns to a complex structure 
on $\Sigma$ the energy of the harmonic map homotopic to $u_0$, and showed that this function is plurisubharmonic  on Teichm\"uller space of $\Sigma$. 
 
 Let $\mc{T}$ be Teichm\"uller space of a  surface of genus
$g\geq 2$. Let $\pi:\mc{X}\to \mc{T}$ be Teichm\"uller curve over Teichm\"uller space $\mc{T}$, namely it is the holomorphic family
of Riemann surfaces over $\mc{T}$,  the fiber $\mc{X}_z:=\pi^{-1}(z)$ being exactly the Riemann surface given by the complex structure $z\in\mc{T}$, see e.g. \cite[Section 5]{Ahlfors}.
 Let $(M^n, g)$ be a Riemannian manifold and $u_0: (M^n, g)\to (\mc{X}_z, \Phi_z)$ a continuous map, where $\Phi_z$ is the hyperbolic metric on the Riemann surface $\mc{X}_z$. For each $z\in \mc{T}$, by  \cite{ES, Hartman, Albers}, there exists a smooth harmonic map $u: (M^n, g)\to (\mc{X}_z, \Phi_z)$ homotopic to $u_0$, and it is unique unless the image of the map is  a point or a closed geodesic. By the argument in \cite[Section 1.1]{Yamada}, the following energy  
 \begin{align}\label{0.1}
 E(z)=E(u(z))=	\frac{1}{2}\int_M |du(z)|^2d\mu_g
\end{align}
is a smooth function on Teichm\"uller space (see Subsection \ref{subsec1.3}).  
In \cite{Yamada, Yamada1}, Yamada  proved the strict convexity of the energy function along the Weil-Petersson geodesics. For the case where the domain is $(\Sigma, g)$ for some hyperbolic metric
$g$, and the harmonic map $u : (\Sigma, g) \to (\mc{X}_z, \Phi_z)$ is homotopic to the identity map, the convexity has been proven by  Tromba \cite{Tromba}.
It is thus a natural question  whether the  energy function (\ref{0.1}) 
in general is plurisubharmonic on Teichm\"uller space.

Our first main theorem is 
\begin{thm}\label{main theorem}
Let $\pi:\mc{X}\to \mc{T}$ be Teichm\"uller curve over Teichm\"uller space $\mc{T}$. Let $(M^n, g)$ be a Riemannian manifold and consider the energy $E(z)$ of the harmonic map from $(M^n,g)$ to $\mc{X}_z=\pi^{-1}(z)$,  $z\in \mc{T}$. Then the logarithm of energy $\log E(z)$ is a strictly plurisubharmonic function on Teichm\"uller space. In particular, the energy function is also strictly plurisubharmonic. 	
\end{thm}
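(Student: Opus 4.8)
The plan is to compute the first and second variations of $E$ directly and then to deduce the statement for $\log E$ by a Cauchy--Schwarz inequality. Fix $z_0\in\mc{T}$, choose local holomorphic coordinates $(z^1,\dots,z^m)$ on $\mc{T}$ at $z_0$ and, pulling back the fibration, holomorphic coordinates on $\mc{X}$ adapted to $\pi$ together with a fibrewise holomorphic coordinate $w$ in which the hyperbolic metric is $\Phi_z=e^{\v}|dw|^2$, normalised to have Gauss curvature $-1$. Since $u(z)$ depends smoothly on $z$ by the quoted existence and regularity results, the harmonic maps assemble into a smooth $U:M\times\mc{T}\to\mc{X}$ with $\pi\circ U=\mathrm{pr}_{\mc{T}}$ and $U(\cdot,z)=u(z)$. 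Writing $E(z)=F(u(z),z)$, where $F(\phi,z)$ is the energy of a map $\phi:(M,g)\to(\mc{X}_z,\Phi_z)$, I would differentiate under the integral sign, using throughout two facts: $u(z)$ is a critical point of $F(\cdot,z)$, so its tension field $\tau(u(z))$ vanishes; and differentiating $\tau(u(z))\equiv0$ in $z$ shows that $\p_iU$ and $\p_{\o j}U$ solve the linearised (Jacobi) equation, with principal part the Jacobi operator of $u(z)$ and inhomogeneity built from the $z$-derivatives of $\Phi_z$.

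\emph{First variation.} By the chain rule the term carrying $\p_iU$ pairs with $\tau(u(z))=0$ and drops out, leaving the variation of the target metric:
\begin{equation*}
\p_iE=\big(\p_{z^i}F\big)(u(z),z)=\tfrac12\int_M\big(\p_{z^i}\v\big)(u(z))\,|du(z)|^2\,d\mu_g,
\end{equation*}
the derivative being taken with the map frozen. Here $\p_{z^i}\v$ is governed, via the Kodaira--Spencer map, by the harmonic Beltrami differential $\mu_i$ representing $\p/\p z^i$ on $\mc{X}_z$; for $\dim M=1$ and $u(z)$ a closed geodesic this should reproduce the Axelsson--Schumacher first-variation formula for the length.

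\emph{Second variation.} Differentiating once more and using the Jacobi equation to remove the second derivatives of $U$, one should obtain, for a holomorphic vector $v=v^i\p_i$,
\begin{equation*}
\p_i\p_{\o j}E\,v^i\o v^j=\big(\p_{z^i}\p_{\o z^j}F\big)(u(z),z)\,v^i\o v^j-I_{u(z)}\big(\p_vU,\o{\p_vU}\big),
\end{equation*}
where $I_{u(z)}$ is the index form at $u(z)$ and the first term is the second variation of $F$ along the holomorphic family with the map frozen. I would then make both sides explicit. Since the target is a curve of curvature $-1$, the index form is a sum of non-negative integrals, $I_{u(z)}(V,\o V)=\int_M|\n V|^2 d\mu_g+\int_M\big(|du|^2|V|^2-|\langle V,du\rangle|^2\big)d\mu_g$; and the frozen term, after expanding $\p_{z^i}\p_{\o z^j}e^{\v}$ along the horizontal directions and integrating by parts once on the fibre $\mc{X}_z$ (using the Gauss curvature equation for $\v$), should separate as $\int_M|(\p_v\v)(u(z))|^2\,\tfrac12|du(z)|^2 d\mu_g$ plus $\int_M(\p_v\p_{\o v}\v)(u(z))\,\tfrac12|du(z)|^2 d\mu_g$, where for the hyperbolic family $\p_v\p_{\o v}\v$ equals the inverse of a positive elliptic operator on $\mc{X}_z$ applied to a non-negative expression in $|\mu_v|^2$ and is hence $>0$ on all of $\mc{X}_z$ whenever $v\neq0$. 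Executing this bookkeeping --- tracking the horizontal lifts, carrying out the two integrations by parts (on $M$ against $\tau(u(z))=0$ and the Jacobi equation, on $\mc{X}_z$ against its Laplacian) and isolating the positive remainder with the correct constant coming from the normalisation $-1$ --- is, I expect, the technical heart of the argument and the main obstacle.

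\emph{Passage to $\log E$.} From $\p_i\p_{\o j}\log E=E^{-1}\p_i\p_{\o j}E-E^{-2}\p_iE\,\p_{\o j}E$ and the first-variation formula, $\p_iE\,v^i=\int_M(\p_v\v)(u(z))\,\rho$ with $\rho:=\tfrac12|du(z)|^2 d\mu_g$ and $E=\int_M\rho$, so Cauchy--Schwarz gives $|\p_iE\,v^i|^2\le E\int_M|(\p_v\v)(u(z))|^2\rho$. Since the right-hand integral is exactly the first piece of the frozen second variation, subtracting it should leave $E$ times $-I_{u(z)}(\p_vU,\o{\p_vU})+\int_M(\p_v\p_{\o v}\v)(u(z))\,\rho$; using the explicit form of $I_{u(z)}$, the Jacobi equation and one further integration by parts (again with the normalisation $-1$ providing the matching constant), I would show this quantity is $\ge0$, and $>0$ when $v\neq0$, because $\mu_v\not\equiv0$ forces $(\p_v\p_{\o v}\v)(u(z))>0$ on $M$ while $u(z)$ is non-constant (as $E(z)>0$). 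Hence $\p_i\p_{\o j}\log E\,v^i\o v^j>0$ for $v\neq0$, i.e.\ $\log E$ is strictly plurisubharmonic; and since $\p\o\p E=E\,\p\o\p\log E+E^{-1}\,\p E\w\o\p E$ with $E>0$, the energy $E$ itself is strictly plurisubharmonic as well.
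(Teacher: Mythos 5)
Your architecture (first variation, second variation as ``frozen term minus index form'', then Cauchy--Schwarz to pass to $\log E$) is reasonable, and your first-variation formula agrees with the paper's intermediate expression $\p_z E=\int_M g^{ij}u^v_i\o{u^v_j}\,\p_z\phi_{v\b v}\,d\mu_g$. But the decisive step is missing. After you absorb $E^{-1}|\p E|^2$ into the piece $\int_M|\p_z\v|^2\rho$ of the frozen term by Cauchy--Schwarz, you still must show
$$\int_M(\p_z\p_{\b z}\v)\,\rho\;-\;I_{u(z)}\bigl(\p_zU,\o{\p_zU}\bigr)\;>\;0,$$
and you assert this follows from ``the Jacobi equation and one further integration by parts.'' That inequality is precisely the hard content of the theorem, and nothing in your sketch produces it: the index form of the variation field is not a priori dominated by $\int(\p_z\p_{\b z}\v)\rho$. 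Note also that $\p_z\p_{\b z}\v$ and the field $\p_zU$ are separately coordinate-dependent (only their combination is intrinsic), and the quantity that satisfies Schumacher's equation $(\Box+1)c(\phi)=|A|^2$ and is strictly positive is the geodesic curvature $c(\phi)_{z\b z}=\p_z\p_{\b z}\v-e^{-\v}|\p_z\p_{\b v}\v|^2$, not $\p_z\p_{\b z}\v$ itself; your identification of the ``frozen'' Laplacian term with the inverse elliptic operator applied to $|\mu|^2$ conflates the two.

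The paper closes exactly this gap by a different mechanism. It rewrites $\p_zE=\langle A,du\rangle$, where $A$ is the pullback of the harmonic Kodaira--Spencer tensor contracted with $\o{du}$, proves $\nabla A=0$, derives the coupled elliptic system $\mc L W=\mc G\o V-\nabla^*A$, $\mc L\o V=\o{\mc G}W$ for the vertically projected (hence intrinsic) variation fields, solves $\nabla W=-\nabla\left(\mc L-\mc G\mc L^{-1}\o{\mc G}\right)^{-1}\nabla^*A$, and then decomposes $Id-\nabla\left(\mc L-\mc G\mc L^{-1}\o{\mc G}\right)^{-1}\nabla^*$ into two non-negative operators plus the harmonic projection $\mb H$. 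The term $E^{-1}|\p E|^2$ is absorbed not by Cauchy--Schwarz against the frozen term but by $\|\mb H(A)\|^2\ge|\langle A,du\rangle|^2/\|du\|^2$, using that $du$ is itself a harmonic form; what survives is $\frac{1}{\|du\|^2}\int_M c(\phi)_{z\b z}|du|^2d\mu_g>0$. Unless you can independently establish the displayed inequality above, your proposal does not constitute a proof.
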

Combining with \cite[Lemma 3]{Sch1} we have the following
\begin{cor}\label{cor0.1}
	The logarithm of a sum of energy functions
	$$\log \sum_{i=1}^{N}E_i(z)$$
	is also strictly plurisubharmonic. 
\end{cor}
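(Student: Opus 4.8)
The plan is to deduce this directly from Theorem \ref{main theorem} together with the pointwise algebraic fact recorded in \cite[Lemma 3]{Sch1}: if $f_1,\dots,f_N$ are smooth positive functions on a complex manifold such that each $\log f_i$ is plurisubharmonic, then $\log\sum_i f_i$ is plurisubharmonic. Since Theorem \ref{main theorem} gives that each $\log E_i$ is in fact \emph{strictly} plurisubharmonic, I would verify that the conclusion can be upgraded to strict plurisubharmonicity of $\log E$, where $E:=\sum_{i=1}^N E_i$.

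First I would record the Levi-form identity: for a positive smooth function $E$ and a $(1,0)$-tangent vector $\xi$ on $\mc{T}$,
\begin{align*}
\partial\bar\partial\log E\,(\xi,\bar\xi)=\frac{\partial\bar\partial E\,(\xi,\bar\xi)}{E}-\frac{|\partial E\,(\xi)|^2}{E^2},
\end{align*}
where $\partial E\,(\xi)=\sum_\alpha\xi^\alpha\partial_{z^\alpha}E$. Applied to each $E_i$ this yields the decomposition
\begin{align*}
\partial\bar\partial E_i\,(\xi,\bar\xi)=\frac{|\partial E_i\,(\xi)|^2}{E_i}+E_i\,\partial\bar\partial\log E_i\,(\xi,\bar\xi),
\end{align*}
in which, by Theorem \ref{main theorem}, $\partial\bar\partial\log E_i\,(\xi,\bar\xi)>0$ whenever $\xi\neq 0$. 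Summing over $i$ and multiplying by $E=\sum_j E_j$, and then applying the Cauchy--Schwarz inequality $\big|\sum_i\partial E_i(\xi)\big|^2\le\big(\sum_i E_i\big)\sum_i\frac{|\partial E_i(\xi)|^2}{E_i}$, I obtain
\begin{align*}
E\,\partial\bar\partial E\,(\xi,\bar\xi)
&=\Big(\sum_j E_j\Big)\sum_i\frac{|\partial E_i\,(\xi)|^2}{E_i}+\Big(\sum_j E_j\Big)\sum_i E_i\,\partial\bar\partial\log E_i\,(\xi,\bar\xi)\\
&\ge |\partial E\,(\xi)|^2+\Big(\sum_j E_j\Big)\sum_i E_i\,\partial\bar\partial\log E_i\,(\xi,\bar\xi).
\end{align*}
Since the last term is a sum of the strictly positive quantities $\partial\bar\partial\log E_i(\xi,\bar\xi)$ weighted by the positive factors $E_i\sum_jE_j$, it is $>0$ for every $\xi\neq0$, and therefore
\begin{align*}
E^2\,\partial\bar\partial\log E\,(\xi,\bar\xi)=E\,\partial\bar\partial E\,(\xi,\bar\xi)-|\partial E\,(\xi)|^2>0,\qquad \xi\neq0,
\end{align*}
which is precisely the strict plurisubharmonicity of $\log\sum_{i=1}^N E_i$.

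Essentially all the content of the corollary is carried by Theorem \ref{main theorem}, and the remaining argument is elementary; the only point that needs a short remark is the passage from semipositivity to strict positivity, which is handled by isolating the $\sum_i E_i\,\partial\bar\partial\log E_i$ term above. I therefore do not anticipate any genuine obstacle, and the proof can be written in a few lines once Theorem \ref{main theorem} and \cite[Lemma 3]{Sch1} are invoked.
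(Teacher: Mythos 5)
Your proof is correct and follows essentially the same route as the paper: the paper simply invokes \cite[Lemma 3]{Sch1}, i.e.\ the inequality $(a+b)\sqrt{-1}\p\b{\p}\log(a+b)\geq a\sqrt{-1}\p\b{\p}\log a+b\sqrt{-1}\p\b{\p}\log b$, together with Theorem \ref{main theorem}, and your Cauchy--Schwarz computation is precisely the standard proof of that cited lemma (extended to $N$ summands), with the strictness made explicit. No gap.
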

In the case of geodesic curves the speed 
$|du|$ is constant, so the energy function is the square of 
geodesic length function (\ref{LE}),  which implies that the logarithm of a geodesic length function is also strictly plurisubharmonic. 
\begin{cor}[\cite{Wol1, Wol2, Wol3}]\label{geodesic length}
Let $\gamma(z)$ be a smooth family of closed geodesic curves over  Teichm\"uller space. Then both the length function $\ell(\gamma(z))$ and the logarithm of length function $\log \ell(\gamma(z))$ are strictly plurisubharmonic.	 In particular, the geodesic length function is strictly convex along Weil-Petersson geodesics.
	
\end{cor}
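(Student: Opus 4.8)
The plan is to realize the geodesic length function as the square root of an energy function for a one-dimensional domain and then to quote Theorem \ref{main theorem}. Take $M=S^{1}$, the circle of unit length with its standard metric, fix the (primitive, essential) free homotopy class represented by the loops $\gamma(z)$, and let $u_{0}:S^{1}\to\mc{X}_{z_{0}}$ be a parametrization of $\gamma(z_{0})$ in that class. Because $\dim M=1$, the harmonic map equation for $u(z):S^{1}\to\mc{X}_{z}$ homotopic to $u_{0}$ reduces to the geodesic equation in the arclength parameter of $S^{1}$, so $u(z)$ is a constant-speed parametrization of the closed geodesic $\gamma(z)$; in particular $|du(z)|$ is constant and equal to $\ell(\gamma(z))$, whence
\begin{equation}\label{LE}
E(z)=\frac12\int_{S^{1}}|du(z)|^{2}\,ds=\frac12\,\ell(\gamma(z))^{2}.
\end{equation}
The harmonic map here is not unique (its image is a closed geodesic), but every reparametrization carries the same energy, so $E(z)$ is a well-defined smooth function on $\mc{T}$ and Theorem \ref{main theorem} applies to it.

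First I would invoke Theorem \ref{main theorem} in its logarithmic form: $\log E(z)=2\log\ell(\gamma(z))-\log 2$ is strictly plurisubharmonic, hence so is $\log\ell(\gamma(z))$. To descend from this to $\ell(\gamma(z))$ itself (strict plurisubharmonicity of $E$ alone will not suffice, since the square root of a plurisubharmonic function need not be plurisubharmonic) I would set $v:=\log\ell(\gamma(z))$ and use the elementary identity
\[
i\,\partial\bar\partial\, e^{v}=e^{v}\bigl(i\,\partial\bar\partial v+i\,\partial v\wedge\bar\partial v\bigr)\geq e^{v}\, i\,\partial\bar\partial v>0,
\]
which holds because $e^{v}>0$, the $(1,1)$-form $i\,\partial v\wedge\bar\partial v$ is nonnegative, and $i\,\partial\bar\partial v>0$ by the previous step. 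Thus $\ell(\gamma(z))=e^{v}$ is strictly plurisubharmonic as well, which together with the previous line proves the plurisubharmonicity assertions of the corollary.

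The remaining clause, strict convexity of $\ell(\gamma(z))$ along Weil-Petersson geodesics, is not a formal consequence of plurisubharmonicity: a Weil-Petersson geodesic is a totally real curve and the Weil-Petersson metric is not flat, so positivity of the complex Hessian says nothing a priori about the real second derivative along such a curve. This is the classical theorem of Wolpert; in the present framework it is obtained as the $\dim M=1$ instance of the convexity of the energy function, via the refined second variation formula for $E^{1/2}=2^{-1/2}\ell(\gamma(z))$ (the case in which the Axelsson-Schumacher formula is recovered), which gives $\partial_{t}^{2}\ell(\gamma(z(t)))>0$ along every Weil-Petersson geodesic $z(t)$. Hence the only points needing care are the well-definedness of $E(z)$ despite the non-uniqueness of the harmonic map from $S^{1}$, handled above, and this last real second variation estimate, which lies outside Theorem \ref{main theorem}; everything else is immediate from \eqref{LE} together with that theorem.
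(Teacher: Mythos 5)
Your treatment of the two plurisubharmonicity assertions is essentially the paper's own argument: reduce via $\ell(\gamma(z))=E(z)^{1/2}\ell_0^{1/2}$ (your normalization $E=\tfrac12\ell^2$ differs harmlessly from the paper's $E=\ell^2/\ell_0$), apply Theorem \ref{main theorem} to get $\log\ell$ strictly plurisubharmonic, and exponentiate using $i\p\b{\p}e^{v}=e^{v}\bigl(i\p\b{\p}v+i\p v\wedge\b{\p}v\bigr)$. Your remark on the well-definedness of $E(z)$ despite non-uniqueness of the harmonic parametrization is a worthwhile point the paper leaves implicit. That part is correct.

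The gap is in the final clause. You rightly observe that strict plurisubharmonicity does not formally yield $\ddot{\ell}>0$ along a Weil--Petersson geodesic, but the mechanism you then propose does not close the gap: the refined second variation formula for $E^{1/2}$ in the $\dim M=1$ case (Proposition \ref{prop2}) computes the \emph{complex} Hessian $\p^2E^{1/2}/\p z^{\alpha}\p\b{z}^{\beta}$, not the real second derivative of $\ell$ along a totally real WP geodesic, so it only re-proves that $\ell$ is strictly plurisubharmonic --- exactly the insufficiency you flagged one sentence earlier. Nor do the real convexity results of Section \ref{sec3} rescue this: Theorem \ref{thm0.5} and Corollary \ref{corY1} give strict convexity of $E^{c}$ along WP geodesics only for $c\geq 5/6$, whereas $\ell$ is proportional to $E^{1/2}$ and $1/2<5/6$. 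The step the paper actually uses is Wolpert's comparison between the complex Hessian and the WP Riemannian Hessian, $\p\b{\p}\ell\leq\ddot{\ell}\leq 3\,\p\b{\p}\ell$ (see \cite[Section 3]{Wol2}), which converts strict plurisubharmonicity of $\ell$ directly into strict convexity along WP geodesics. You need to invoke that inequality, or supply an equivalent real second variation estimate for $\ell$ itself, to finish the proof.
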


In \cite{Wol1}, the geodesic length function of a family of curves that fill up the surface
is proved to be proper and plurisubharmonic, then Wolpert \cite[Section 6]{Wol1} gave a new proof on  Steinness of Teichm\"uller space \cite{Bers}. In \cite[Theorem 6.1.1]{Tromba0}, Tromba also reproved this result using Dirichlet's energy, which is a function on Teichm\"uller space of the initial manifold.  For the properness of energy function,  Wolf \cite{Wolf1} proved that the energy function is proper if the domain manifold is a hyperbolic surface $(\Sigma, g)$ with the harmonic map homotopic to the identity. For a general Riemannian manifold $M$, Yamada \cite[Proposition 3.2.1]{Yamada} showed the properness of energy function when $(u_0)_*: \pi_1(M)\to \pi_1(\mc{X}_{z_0})$ is surjective. 
Combining with  Theorem \ref{main theorem}, this shows that $\mc{T}$ is Stein. 
\begin{cor}\label{stein}
If $(u_0)_*: \pi_1(M)\to \pi_1(\mc{X}_{z_0})$ is surjective, then the energy function $E(z)$ is proper and strictly plurisubharmonic. In particular, Teichm\"uller space $\mc{T}$ is a complex Stein manifold. 
\end{cor}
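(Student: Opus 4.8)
The plan is to assemble the statement from two ingredients: the strict plurisubharmonicity supplied by Theorem \ref{main theorem}, and a properness statement for the energy; the Steinness of $\mc{T}$ will then follow from Grauert's solution of the Levi problem. The first ingredient is immediate, since Theorem \ref{main theorem} already gives that $\log E(z)$, and in particular $E(z)$ itself, is strictly plurisubharmonic on $\mc{T}$, while $E$ is smooth on $\mc{T}$ by the discussion in Subsection \ref{subsec1.3}. So the only real work is to establish properness of $E$ and then invoke the function-theoretic criterion for Steinness.

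For properness the hypothesis that $(u_0)_*:\pi_1(M)\to\pi_1(\mc{X}_{z_0})$ is surjective is essential, and I would invoke Yamada's Proposition 3.2.1 in \cite{Yamada}, which asserts exactly that $E:\mc{T}\to\mb{R}$ is proper under this hypothesis. The mechanism is a collar estimate: if $z_j\to\infty$ in $\mc{T}$, then by Mumford compactness $\mc{X}_{z_j}$ develops a short simple closed geodesic $\gamma_j$ whose embedded collar has width tending to $\infty$; surjectivity of $(u_0)_*$ forces the harmonic maps $u(z_j)$, being homotopic to $u_0$, to send a fixed family of bounded-length paths in $M$ across these widening collars, and the length-energy (Cauchy--Schwarz) inequality then makes $E(z_j)\to\infty$. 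One may either cite Yamada directly or reproduce this argument in the notation of the present paper.

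It remains to conclude Steinness. Since $E\ge 0$ on $\mc{T}$, the function $E$ is bounded below, so properness forces every sublevel set $\{z\in\mc{T}:E(z)\le c\}$ to be compact; thus $E$ is a smooth strictly plurisubharmonic exhaustion function on the complex manifold $\mc{T}$, and by Grauert's theorem (the solution of the Levi problem) $\mc{T}$ is Stein, which reproves the classical result of Bers \cite{Bers}. The only genuine obstacle in the whole argument is the properness input: the strict plurisubharmonicity is handed to us by Theorem \ref{main theorem}, and the implication ``proper, bounded below, strictly plurisubharmonic $\Rightarrow$ Stein'' is classical. One should nonetheless check carefully that the surjectivity hypothesis is truly used --- without it the harmonic map may collapse to a point or to a closed geodesic and $E$ need not be proper --- and that the regularity of $E$ established earlier in the paper is precisely the regularity that Grauert's criterion requires.
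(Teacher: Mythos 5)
Your proposal is correct and follows essentially the same route as the paper: properness is quoted from Yamada's proposition under the surjectivity hypothesis, strict plurisubharmonicity comes from Theorem \ref{main theorem}, and Steinness follows since a strictly plurisubharmonic proper exhaustion exists (the paper takes this as its definition of Stein, while you invoke Grauert's theorem, which is the same content). The only detail the paper adds is the explicit choice $M=\mc{X}_{z_0}$, $u_0=\mathrm{Id}$ to exhibit a pair satisfying the surjectivity hypothesis, so that the Steinness conclusion is unconditional.
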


We explain briefly our method to prove Theorem \ref{main theorem}.

Let $u: (M^n, g)\to (\mc{X}_z,\Phi)$ be a smooth map, then $du$ is the section of bundle $T^*M\otimes u^*T_{\mb{C}}\mc{X}_z$, for which there is an induced metric $g^*\otimes \Phi$ from $(M^n,g)$ and $(\mc{X}_z,\Phi)$. Here $T_{\mb{C}}\mc{X}_z=T\mc{X}_z\oplus \o{T\mc{X}_z}$ denotes the complex tangent bundle, and $T\mc{X}_z$ denotes the holomorphic tangent bundle of $\mc{X}_z$.
 Let $\{x^i\}$ denote a local coordinate system near a point $p$ in $M$,  and $\{v\}$ denote the holomorphic coordinates of Riemann surface $\mc{X}_z$. Let $z=\{z^{\alpha}\}$ denote the holomorphic coordinates of Teichm\"uller space $\mc{T}$,  the following
tensor will play a crucial role in our computation,
\begin{align}\label{Aalpha}
A_{\alpha}=A_{\alpha\b{v}\b{v}}\o{u^v_i}\phi^{v\b{v}}dx^i\otimes \frac{\p}{\p v}\in A^1(M, u^*T\mc{X}_z);
\end{align}
see Subsection \ref{subsec1.1} 
for the precise definition.
\begin{thm}\label{thm0.3}
The first variation of the energy function $E(z)$ (\ref{0.1}) is given by 
\begin{align}
\frac{\p}{\p z^{\alpha}}E(z)=\langle A_{\alpha}, du\rangle.
\end{align}
\end{thm}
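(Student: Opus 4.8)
The plan is to compute the derivative in a local chart on $\mc{X}$, invoking the harmonicity of $u(z)$ --- via the harmonic map equation and its complex conjugate --- first to discard the contributions coming from the variation of the map itself, and then to reshape the one surviving term into $\langle A_\alpha,du\rangle$. Fix $z$ and choose coordinates $(z^\alpha,v)$ on $\mc{X}$ adapted to $\pi$, with fibrewise hyperbolic metric $\Phi=\phi_{v\b v}(dv\otimes d\b v+d\b v\otimes dv)$, and write $u^v=u^v(x,z)$ for the components of the smooth (Subsection~\ref{subsec1.3}) family $z\mapsto u(z)$; then $E(z)=\int_M\phi_{v\b v}\,g^{ij}u^v_i\,\o{u^v_j}\,d\mu_g$ (the integrand is real-valued), and $u(z)$ solves $\Delta_g u^v+(\p_v\log\phi_{v\b v})\,g^{ij}u^v_iu^v_j=0$ together with its conjugate. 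Differentiating under the integral sign in $z^\alpha$ --- the convergence being immediate when $M$ is closed and otherwise controlled by the estimates of Subsection~\ref{subsec1.3} --- yields
\[
\frac{\p}{\p z^\alpha}E(z)=\int_M(\p_{z^\alpha}\phi_{v\b v})\,g^{ij}u^v_i\o{u^v_j}\,d\mu_g+T_1+T_2,
\]
where $T_1$ gathers all terms containing $\p_{z^\alpha}u^v$ and $T_2$ all terms containing $\p_{z^\alpha}\o{u^v}$. In each $T_k$ one integrates by parts to move the $x$-derivative off $\p_{z^\alpha}u^v$ (resp.\ $\p_{z^\alpha}\o{u^v}$), substitutes the harmonic map equation (resp.\ its conjugate), expands $\p_i\phi_{v\b v}=(\p_v\phi_{v\b v})u^v_i+(\p_{\b v}\phi_{v\b v})\o{u^v_i}$, and finds that the remaining terms cancel in pairs; hence $T_1=T_2=0$ and $\frac{\p}{\p z^\alpha}E(z)=\int_M(\p_{z^\alpha}\phi_{v\b v})\,g^{ij}u^v_i\o{u^v_j}\,d\mu_g$.

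To identify this with $\langle A_\alpha,du\rangle$, let $v_\alpha=\p/\p z^\alpha+v_\alpha^v\,\p/\p v$ be the lift of $\p/\p z^\alpha$ entering the definition of $A_\alpha$ in \eqref{Aalpha}: it is horizontal for the K\"ahler form on $\mc{X}$, so that $\phi_{v\b v}v_\alpha^v=-\phi_{\alpha\b v}$, and $A_{\alpha\b v\b v}=\phi_{v\b v}\,\p_{\b v}v_\alpha^v$. Closedness of that K\"ahler form gives $\p_v\phi_{\alpha\b v}=\p_{z^\alpha}\phi_{v\b v}$, hence $\p_{z^\alpha}\phi_{v\b v}=-\p_v(\phi_{v\b v}v_\alpha^v)$. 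Substituting this into the formula just obtained and integrating by parts once more in $x$ --- using this time the conjugate harmonic map equation --- the $\p_{\b v}$-derivative terms cancel and one is left with $\int_M A_{\alpha\b v\b v}\,g^{ij}\o{u^v_i}\,\o{u^v_j}\,d\mu_g$, which is precisely $\langle A_\alpha,du\rangle$ for the induced metric $g^*\otimes\Phi$ on $T^*M\otimes u^*T_{\mb{C}}\mc{X}_z$. (If the image of $u(z)$ is not contained in a single chart one carries this out chart by chart and sums with a partition of unity, which is legitimate since the final identity is intrinsic.)

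I expect the main obstacle to be the bookkeeping in the first step: the differentiation produces a number of terms, and after the integration by parts one must verify that exactly the right combination cancels --- this is where harmonicity is essential, and a misplaced sign or chain-rule term is easy to make; the second step, though shorter, is delicate in the same way and in addition relies on the precise construction of $A_\alpha$ and on the K\"ahler property of the total-space metric from Subsection~\ref{subsec1.1}. Conceptually the formula is this simple because, $u(z)$ being harmonic, only the variation of the target metric along the horizontal lift $v_\alpha$ contributes to $\p_{z^\alpha}E$, and by construction that variation $\mc{L}_{v_\alpha}\Phi$ has no $(1,1)$- or $(2,0)$-component along the fibre, its $(0,2)$-component being exactly $2A_{\alpha\b v\b v}\,d\b v\otimes d\b v$; this gives an alternative, flow-based derivation, provided one is careful about the real and complex directions along which one flows.
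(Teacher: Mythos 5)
Your proposal is correct and follows essentially the same route as the paper's proof: differentiate under the integral, use harmonicity twice (via integration by parts) to discard the terms involving $\p_{z^\alpha}u^v$ and then the $\n_i\phi_{\alpha\b{v}}$-term, and identify the survivor with $\int_M A_{\alpha\b{v}\b{v}}\o{u^v_i}\,\o{u^v_j}g^{ij}d\mu_g=\langle A_\alpha,du\rangle$. The only cosmetic difference is that the paper packages the cancellations into covariant derivatives and Lemma \ref{lemma1}(ii), whereas you work with bare partial derivatives and the commutation identity $\p_v\phi_{\alpha\b{v}}=\p_{z^\alpha}\phi_{v\b{v}}$.
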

Let $\Delta=\n\n^*+\n^*\n$ be the Hodge-Laplace operator on $A^{\ell}(M, u^*T\mc{X}_z)$ (see Subsection \ref{subsec1.2}), and set 
\begin{align*}
\mc{L}=\Delta+\frac{1}{2}|du|^2,\quad
	\mc{G}=g^{ij}\phi_{v\b{v}}u^v_i u^v_j \frac{\p}{\p v}\otimes d\b{v}\in \text{Hom}(u^*\o{T\mc{X}_z},u^*T\mc{X}_z),	
\end{align*}
and $c(\phi)_{\alpha\b{\beta}}:=\phi_{\alpha\b{\beta}}-\phi_{\alpha\b{v}}\phi_{v\b{\beta}}\phi^{v\b{v}}$ (see Lemma \ref{lemma0}). Then
\begin{thm}\label{thm0.4}
	The second variation of the energy (\ref{0.1}) is given by
	\begin{align}\label{0.2}
	\frac{\p^2}{\p z^{\alpha}\p\b{z}^{\beta}}E(z)=\frac{1}{2}\int_M c(\phi)_{\alpha\b{\beta}}|du|^2d\mu_g+\langle(Id-\n\left(\mc{L}-\mc{G}\mc{L}^{-1}\o{\mc{G}}\right)^{-1}\n^*)A_{\alpha},A_{\beta}\rangle.	
	\end{align}
\end{thm}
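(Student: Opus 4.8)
The plan is to differentiate the first-variation formula of Theorem \ref{thm0.3} once more in $\bar z^\beta$, treating the harmonic map $u=u(z)$ as varying with $z$. Write $\frac{\p}{\p z^\alpha}E(z)=\langle A_\alpha, du\rangle$; since $A_\alpha$ is built algebraically from the metric data $\phi$, the variation $\frac{\p}{\p\bar z^\beta}A_\alpha$ splits into a purely metric piece — which, after integration by parts and use of the harmonic map equation, will produce the curvature term $\frac12\int_M c(\phi)_{\alpha\bar\beta}|du|^2\,d\mu_g$ — and a piece involving the horizontal lift of $\p/\p\bar z^\beta$ acting on $u$. The remaining contributions come from $\langle A_\alpha, \frac{\p}{\p\bar z^\beta}du\rangle$; here I need to identify $\frac{\p}{\p\bar z^\beta}du$ (equivalently the variation field of the harmonic map along the antiholomorphic direction) in terms of a section $V_\beta\in A^0(M,u^*T\mc{X}_z)$, and show that $du$ varies as $\n V_\beta$ plus a term matching $A_\beta$-type data. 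So the first key step is the derivation of the linearized harmonic map equation: differentiating $\n^*du=0$ (the harmonic equation as a section equation) along $\bar z^\beta$ yields an elliptic equation for the variation field, and the Weitzenböck/Bochner identity on $A^1(M,u^*T\mc{X}_z)$ converts $\n^*\n$ plus curvature of $u^*T\mc{X}_z$ — which on a Riemann surface target is controlled by $|du|^2$ — into precisely the operator $\mc{L}=\Delta+\frac12|du|^2$.

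The second key step is disentangling the coupling between the holomorphic and antiholomorphic parts of the variation. Because the target $\mc{X}_z$ is a Riemann surface, $u^*T_{\mb C}\mc{X}_z = u^*T\mc{X}_z\oplus u^*\o{T\mc{X}_z}$, and $du$ has a $(1,0)$-part valued in $u^*T\mc{X}_z$ and a $(0,1)$-part valued in $u^*\o{T\mc{X}_z}$; the operator $\mc{G}$ is exactly the bundle map that, via $|du|^2$, intertwines these two summands. So the linearized system is really a $2\times 2$ system coupling the variation field $V$ and its conjugate $\bar V$, of the schematic form $\mc{L}V = \n^*A_\alpha + \o{\mc{G}}\bar V$ (and the conjugate equation). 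Solving the conjugate equation for $\bar V$ and substituting back produces the \emph{Schur complement} operator $\mc{L}-\mc{G}\mc{L}^{-1}\o{\mc{G}}$, which is the reason that exact combination appears in \eqref{0.2}. I would set this up carefully: show $\mc{L}$ is invertible (it is $\Delta$ plus a nonnegative — in fact positive once $du\not\equiv0$ — zeroth order term, hence strictly positive), argue $\mc{L}-\mc{G}\mc{L}^{-1}\o{\mc{G}}$ is likewise positive and invertible by a norm estimate $\|\mc{G}\|\le \frac12|du|^2\le \mc{L}$ pointwise, and then solve for the variation field explicitly as $V_\beta = (\mc{L}-\mc{G}\mc{L}^{-1}\o{\mc{G}})^{-1}(\text{source})$ with source expressed through $\n^*A_\beta$.

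The third step is purely bookkeeping: substitute the solved variation field back into $\langle A_\alpha, \n V_\beta\rangle = \langle \n^*A_\alpha, V_\beta\rangle$ and collect terms. The $\langle A_\alpha, du\rangle$-type terms that do not pass through the Green operator assemble into the $\langle A_\alpha, A_\beta\rangle$ summand, and the Green-operator terms assemble into $-\langle \n(\mc{L}-\mc{G}\mc{L}^{-1}\o{\mc{G}})^{-1}\n^* A_\alpha, A_\beta\rangle$, giving the stated formula \eqref{0.2}. Throughout I will use the harmonic map equation $\n^* du=0$ repeatedly to kill unwanted terms, and the Kähler/Weil-Petersson identities for $c(\phi)_{\alpha\bar\beta}$ (the geodesic curvature of the fibration, from Lemma \ref{lemma0}) to recognize the first integral.

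I expect the main obstacle to be step two: correctly tracking the $(1,0)$/$(0,1)$ decomposition of $du$ and the induced coupling, so that the algebra produces exactly the Schur complement $\mc{L}-\mc{G}\mc{L}^{-1}\o{\mc{G}}$ rather than some other combination. In particular one must be careful that when $\dim M>1$ the "extra" directions in $T^*M$ contribute to $\mc{G}$ through the full contraction $g^{ij}\phi_{v\bar v}u^v_iu^v_j$, and that the variation field genuinely lies in $u^*T\mc{X}_z$ (a rank-one bundle on the target but pulled back over $M$) so that $\mc{L}$ and $\mc{G}$ act consistently. A secondary technical point is justifying the invertibility and the integration by parts on the closed (or, if $M$ is noncompact, suitably controlled) manifold $M$, which is where the hypothesis that $E(z)$ is finite and smooth — guaranteed by the setup via \cite{Yamada} — is used.
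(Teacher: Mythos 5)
Your proposal follows essentially the same route as the paper: differentiate the first-variation formula $\p_\alpha E=\langle A_\alpha,du\rangle$, linearize the harmonic map equation to obtain the coupled system $\mc{L}(W)=\mc{G}(\o{V})-\n^*A_\alpha$, $\mc{L}(\o{V})=\o{\mc{G}}(W)$ (the paper's Lemma \ref{lemma5}), eliminate $\o{V}$ to produce the Schur complement $\mc{L}-\mc{G}\mc{L}^{-1}\o{\mc{G}}$, and substitute $\n W$ back into the differentiated formula. The only caveat is that $\mc{L}-\mc{G}\mc{L}^{-1}\o{\mc{G}}$ is merely non-negative rather than strictly positive --- its kernel sits inside $\text{Ker}\,\Delta$ (Lemma \ref{lemma6}) --- but since $\n$ annihilates that kernel and $\text{Im}\,\n^*$ is orthogonal to it, the inversion against $\n^*$ is still well defined and your argument goes through exactly as in the paper.
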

It is a well-known fact that
 in the RHS of (\ref{0.2}) is positive; see
\cite[Theorem 1]{Sch}. We shall  show that the second term  in the RHS of (\ref{0.2}) is  non-negative, proving
thus 
 the plurisubharmonicity. 
The easiest case is when  $\dim M=1$, i.e, $u$ is a geodesic curve.
Then  $\n^2=0$ and we get
    \begin{prop}\label{prop2}
If $\dim M=1$, then 	
\begin{align*}
	\frac{\p^2 E^{1/2}}{\p z^{\alpha}\p\b{z}^{\beta}} 
 &=	\frac{1}{2}\frac{1}{E^{1/2}}\left(\int_M (\Box+1)^{-1}(A_{\alpha},A_{\beta}) d\mu_g+\langle\frac{1}{2}|du|^2(|du|^2+\Delta)^{-1}A_{\alpha}, A_{\beta}\rangle\right),
\end{align*}
where $\Box=-\phi^{v\b{v}}\p_v\p_{\b{v}}$ and $(A_{\alpha},A_{\beta})=A_{\alpha\b{v}}^v\o{A_{\beta\b{v}}^v}(\frac{1}{2}|du|^2)$ is a smooth function on $(z,v)=(z,u(z,x))$.
If we take the arc-length parametrization at $z=z_0$, i.e. $\frac{1}{2}|du|^2(z_0)=1$,  then the first and the second variations  of geodesic length function are given by
\begin{align*}
	\frac{\p\ell(z)}{\p z^{\alpha}}|_{z=z_0}=\frac{1}{2}\langle A_{\alpha},du\rangle
\end{align*}
and
\footnote{We note that in  \cite[Theorem 6.2,(38)]{AS}, 
there is an extra term
$\frac{1}{4\ell(\gamma_s)}\int_{\gamma_s}A_i\cdot\int_{\gamma_s}A_{\b{j}}$
appearing, this is due to a minor miscomputation; see Remark \ref{rem} below .}
 \begin{align*}
 \frac{\p^2\ell(z)}{\p z^{\alpha}\p\b{z}^{\beta}}|_{z=z_0}=\frac{1}{2}\left(\int_M (\Box+1)^{-1}(A_{\alpha},A_{\beta})d\mu_g+\langle(2+\Delta)^{-1}A_{\alpha}, A_{\beta}\rangle\right).
 \end{align*} 
\end{prop}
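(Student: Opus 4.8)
The plan is to specialize the general second variation formula \eqref{0.2} of Theorem \ref{thm0.4} to the case $\dim M = 1$, where the domain is a geodesic curve, and then to extract from it the stated identities for $E^{1/2}$ and for the geodesic length function. First I would exploit the fact that on a one-dimensional domain the bundle $T^*M$ is a line bundle, so that any $2$-form with values in $u^*T\mc{X}_z$ vanishes identically: thus the covariant exterior derivative $\n$ on $A^1(M,u^*T\mc{X}_z)$ satisfies $\n^2 = 0$, equivalently $\n\n^\ast$ and $\n^\ast\n$ commute as operators, and the $A^1$ component of the Hodge Laplacian reduces. More importantly, for a section supported in degree $1$, the operator $\mc{L} - \mc{G}\mc{L}^{-1}\o{\mc{G}}$ simplifies because $\n^\ast A_\alpha$ lives in $A^0$; I would carefully compute $\mc{L}$ acting on $A^0(M,u^*T\mc{X}_z)$ and on $A^1(M,u^*T\mc{X}_z)$ separately, observing that on functions the rough Laplacian becomes $\Box = -\phi^{v\b v}\p_v\p_{\b v}$ pulled back along $u$ (this is where the hyperbolicity of $\mc{X}_z$, i.e. the curvature term, is absorbed), and that $\mc{G}$ and $\o{\mc{G}}$ multiply by $\tfrac12|du|^2$ in the appropriate identification. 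This turns the operator-valued expression $(Id - \n(\mc{L}-\mc{G}\mc{L}^{-1}\o{\mc{G}})^{-1}\n^\ast)$ into something diagonal enough that $\langle (Id - \n(\cdots)^{-1}\n^\ast)A_\alpha, A_\beta\rangle$ splits into the two displayed terms: the $(\Box+1)^{-1}$ term coming from the $\n^\ast\n$-part acting on the scalar function $(A_\alpha,A_\beta) = A_{\alpha\b v}^v\o{A_{\beta\b v}^v}(\tfrac12|du|^2)$, and the $\tfrac12|du|^2(|du|^2+\Delta)^{-1}$ term coming from the orthogonal complement.

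Next I would handle the harmonic-oscillator-type normalization. Since $\tfrac12|du|^2$ is the squared speed and is \emph{constant} along a geodesic (this is the classical first integral $|du|^2 = \text{const}$ for geodesics, or equivalently harmonicity plus $\dim M = 1$), the curvature term $c(\phi)_{\alpha\b\beta}$ integrated against $|du|^2$ combines cleanly. Passing from $E$ to $E^{1/2}$ is then a one-line chain-rule computation: since $E = \tfrac12\int_M|du|^2\,d\mu_g = \tfrac12|du|^2\cdot\mathrm{length}$, and $\p_\alpha E = \langle A_\alpha, du\rangle$ by Theorem \ref{thm0.3}, we have $\p_\alpha\p_{\b\beta}E^{1/2} = \tfrac{1}{2E^{1/2}}\p_\alpha\p_{\b\beta}E - \tfrac{1}{4E^{3/2}}\p_\alpha E\,\p_{\b\beta}E$, and I would need to check that the $c(\phi)_{\alpha\b\beta}$ term together with the cross term $\p_\alpha E\,\p_{\b\beta}E$ reorganizes precisely into what is written — here is exactly where the footnote's correction to \cite{AS} enters: the claim is that the extra $\frac{1}{4\ell}\int A_i\cdot\int A_{\b j}$ term does \emph{not} appear, because the would-be cross term is exactly cancelled rather than leaving a remainder. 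I would verify this by writing $c(\phi)_{\alpha\b\beta}$ in coordinates and using that $\int_M A_{\alpha\b v}^v\,d\mu_g$ relates to $\langle A_\alpha, du\rangle$ via the explicit form \eqref{Aalpha} of $A_\alpha$.

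Finally, for the geodesic length function $\ell(z)$, I would use the relation (referenced in the excerpt near \eqref{LE}) that for a family of closed geodesics $E(z) = \tfrac12 |du|^2\cdot \ell(z)^2$ up to the parametrization, and impose the arc-length normalization $\tfrac12|du|^2(z_0) = 1$ at the base point, so that $E(z_0) = \ell(z_0)$ and $E^{1/2} = \ell$ to second order at $z_0$; specializing $|du|^2 = 2$ in the $E^{1/2}$ formula then yields $(|du|^2 + \Delta)^{-1} = (2+\Delta)^{-1}$ and $\tfrac12|du|^2 = 1$, giving the stated first variation $\p_\alpha\ell = \tfrac12\langle A_\alpha, du\rangle$ and second variation $\p_\alpha\p_{\b\beta}\ell = \tfrac12(\int_M(\Box+1)^{-1}(A_\alpha,A_\beta)\,d\mu_g + \langle(2+\Delta)^{-1}A_\alpha, A_\beta\rangle)$. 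The main obstacle I anticipate is the bookkeeping in the second paragraph: reducing the abstract operator $(\mc{L} - \mc{G}\mc{L}^{-1}\o{\mc{G}})^{-1}$ sandwiched between $\n$ and $\n^\ast$ to the two scalar operators $(\Box+1)^{-1}$ and $\tfrac12|du|^2(|du|^2+\Delta)^{-1}$ requires correctly tracking the degree decomposition $A^\bullet(M,u^*T\mc{X}_z) = A^0\oplus A^1$, the identification of $\mc{G}$ with multiplication, and the precise constant in the curvature contribution that produces the "$+1$" shift in $\Box + 1$ — getting any of these constants wrong is precisely the kind of slip that generated the spurious term in \cite{AS}, so this step must be done with care.
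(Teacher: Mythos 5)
Your overall route (specialize Theorem \ref{thm0.4} to $\dim M=1$, exploit $\n^2=0$, then chain rule for $E^{1/2}$ and $\ell=E^{1/2}\ell_0^{1/2}$) is the paper's route, but two of your structural claims are wrong in a way that would prevent the computation from closing.

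First, you attribute the $(\Box+1)^{-1}$ term to ``the $\n^*\n$-part'' of $\langle(Id-\n(\mc{L}-\mc{G}\mc{L}^{-1}\o{\mc{G}})^{-1}\n^*)A_\alpha,A_\beta\rangle$. It does not come from there at all: $\Box=-\phi^{v\b v}\p_v\p_{\b v}$ is the Laplacian on the \emph{fiber} $\mc{X}_z$, not an operator on $M$, and the term $\int_M(\Box+1)^{-1}(A_\alpha,A_\beta)\,d\mu_g$ is precisely the curvature term $\tfrac12\int_M c(\phi)_{\alpha\b\beta}|du|^2\,d\mu_g$ of Theorem \ref{thm0.4}, rewritten via Lemma \ref{lemma1}(v), $(\Box+1)c(\phi)_{\alpha\b\beta}=A^v_{\alpha\b v}A^{\b v}_{\b\beta v}$, together with the constancy of $|du|^2$ along the geodesic. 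The entire operator term $\langle(Id-\n(\cdots)^{-1}\n^*)A,A\rangle$ produces only the \emph{second} displayed summand plus a harmonic-projection piece (see below); no amount of degree bookkeeping on $M$ will generate the fiberwise resolvent $(\Box+1)^{-1}$.

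Second, you omit the harmonic projection $\mb{H}$, and this is exactly the ingredient that makes the passage to $E^{1/2}$ work. The actual reduction is: the harmonic equation and constancy of $|du|^2$ give $[\mc{L},\mc{G}]=0$ and $\mc{G}\o{\mc{G}}=(\tfrac12|du|^2)^2$, hence $\mc{G}\mc{L}^{-1}\o{\mc{G}}=\mc{L}^{-1}(\tfrac12|du|^2)^2$; then $\n\mc{L}=\mc{L}\n$ and $\n A=0$ yield, by spectral decomposition,
\begin{align*}
\n\left(\mc{L}-\mc{G}\mc{L}^{-1}\o{\mc{G}}\right)^{-1}\n^*A
= A-\tfrac12|du|^2(|du|^2+\Delta)^{-1}A-\tfrac12\mb{H}(A),
\end{align*}
so that $\frac{\p^2E}{\p z\p\b z}$ carries an extra summand $\tfrac12\|\mb{H}(A)\|^2$. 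Since in dimension one the harmonic forms in $A^1(M,u^*T\mc{X}_z)$ are spanned by $du$, one has $\tfrac12\|\mb{H}(A)\|^2=\tfrac{1}{2E}|\langle A,du\rangle|^2=\tfrac{1}{2E}|\p_zE|^2$, which cancels exactly the chain-rule term $-\tfrac{1}{2E}|\p_zE|^2$ in $\p\b\p E^{1/2}=\tfrac12E^{-1/2}(\p\b\p E-\tfrac{1}{2E}|\p_zE|^2)$. Your proposed check of the cancellation ``by writing $c(\phi)_{\alpha\b\beta}$ in coordinates'' targets the wrong term entirely; the $c(\phi)$ term plays no role in the cancellation, and without identifying $\mb{H}(A)$ you would be left with an uncancelled $-\tfrac{1}{4}E^{-3/2}|\p_zE|^2$ --- which is precisely the spurious term in \cite{AS} that the footnote corrects. (A minor further slip: the correct relation is $\ell=E^{1/2}\ell_0^{1/2}$ with $\ell_0=\int_Md\mu_g$ fixed, not $E=\tfrac12|du|^2\ell^2$; the $\ell_0^{1/2}$ factors cancel at $z_0$ where $E(z_0)=\ell_0$.)
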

For higher dimensional $M$ $\n^2\not\equiv 0$ 
generally (see e.g. \cite[Page 15]{Xin}), and we
shall treat the
second term in more details.
For notational convinience
we may assume without loss of generality
that the base manifold $\mc{T}$ is one dimensional,
with the indices $\alpha,\beta$ being  replaced by $z$, $A:=A_{\alpha}$. 
 A major ingredient of our proof is the following decomposition
\begin{multline}
Id-\n\left(\mc{L}-\mc{G}\mc{L}^{-1}\o{\mc{G}}\right)^{-1}\n^*=(\Delta^{-1}\Delta-\n\Delta^{-1}\n^*)\\+(\n\Delta^{-1}\n^*-\n\left(\mc{L}-\mc{G}\mc{L}^{-1}\o{\mc{G}}\right)^{-1}\n^*)+\mb{H}, 
 \end{multline}
where 
$\mb{H}$ is the orthogonal projection 
onto harmonic forms.  By Lemmas \ref{lemma9} and \ref{lemma7},
 both the operators $(\Delta^{-1}\Delta-\n\Delta^{-1}\n^*)$ and $(\n\Delta^{-1}\n^*-\n\left(\mc{L}-\mc{G}\mc{L}^{-1}\o{\mc{G}}\right)^{-1}\n^*)$ are non-negative when acting on  $A^{1}(M,u^*T\mc{X}_z)$. Thus 
\begin{aligns}\label{0.3}
\frac{\p^2}{\p z\p\b{z}}E(z)&\geq \frac{1}{2}\int_M c(\phi)_{z\b{z}}|du|^2d\mu_g+\|\mb{H}(A)\|^2.
\end{aligns}
Note that $du$ is  harmonic (see Proposition \ref{prop1}) and using Theorem \ref{thm0.3}, we obtain a lower bound for $\|\mb{H}(A)\|^2$, namely
\begin{aligns}\label{0.4}
\|\mb{H}(A)\|^2&\geq \frac{1}{E}|\langle A, du\rangle|^2=	\frac{1}{E}\frac{\p E}{\p z}\frac{\p E}{\p\b{z}},
\end{aligns}
Combining (\ref{0.3}) with (\ref{0.4}) yields
\begin{aligns}
\frac{\p^2}{\p z\p\b{z}}\log E(z) 
&\geq \frac{1}{\|du\|^2}\int_M c(\phi)_{z\b{z}}|du|^2d\mu_g>0,
\end{aligns}
which proves Theorem \ref{main theorem}. 

Next we  give a simple proof for a theorem of Yamada on
 the convexity of the energy function along Weil-Petersson geodesic\footnote{His proof has a gap. On \cite[Page 62]{Yamada}, the Schwarz inequality is  used
as $ab\leq \frac{1}{4}(a^2+b^2)$ by mistake. 
It seems
to us that with the correct use of the Schwarz inequality the method
there can  not lead to a proof of the convexity.
We thank Yamada for correspondences on this matter.}.
\begin{thm}[{\cite[Theorem 3.1.1]{Yamada}}]\label{thm0.5}
 The energy function $E: \mc{T}\to \mb{R}$,  (\ref{0.1}), is strictly convex along any Weil-Petersson geodesic in $\mc{T}$.	
\end{thm}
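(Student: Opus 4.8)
The plan is to deduce convexity along Weil--Petersson geodesics from the computations already in hand, specialized to the situation where the base $\mc{T}$ is replaced by a real one-parameter family $z=z(t)$ which is a Weil--Petersson geodesic. Concretely, I would consider a holomorphic disc (or more precisely a smooth real curve) $t\mapsto z(t)$ tracing out the geodesic, pull the Teichm\"uller curve back to it, and write $\ddot E(t)$ using the chain rule in terms of the first and second variations $\frac{\p}{\p z^\alpha}E$ and $\frac{\p^2}{\p z^\alpha\p\b z^\beta}E$ from Theorems \ref{thm0.3} and \ref{thm0.4}, together with the Christoffel-symbol contribution coming from the Weil--Petersson connection acting on $\dot z$. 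The key point is that the mixed second variation \eqref{0.2} is already known to be of the form (nonnegative curvature term) $+$ (nonnegative operator term), so the only genuinely new analytic input needed is a lower bound controlling the holomorphic Hessian $\frac{\p^2}{\p z^\alpha\p z^\beta}E$ (the $(2,0)$-part) and the connection terms, so that they cannot overwhelm the positive contributions.

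The key steps, in order, would be: first, assemble the full real Hessian of $E$ as a function on $\mc{T}$, i.e. compute $\n^2 E(\dot z,\dot z)$ where $\n$ is the Weil--Petersson Levi--Civita connection; this is $2\,\Re\bigl(\frac{\p^2 E}{\p z^\alpha\p z^\beta}\dot z^\alpha\dot z^\beta\bigr)+2\frac{\p^2 E}{\p z^\alpha\p\b z^\beta}\dot z^\alpha\b{\dot z}^\beta$ minus the term involving $\Gamma^\gamma_{\alpha\beta}\frac{\p E}{\p z^\gamma}$. Second, compute the $(2,0)$-derivative $\frac{\p^2 E}{\p z^\alpha\p z^\beta}$ by differentiating the first-variation formula $\frac{\p E}{\p z^\alpha}=\langle A_\alpha, du\rangle$ once more holomorphically; since $du$ is $\p$-closed in the appropriate sense and $A_\alpha$ is built from the Beltrami-type tensor, this should produce a manifestly controlled expression (no Laplacian resolvent with indefinite sign). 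Third, combine everything, using that along a Weil--Petersson geodesic the acceleration term $\Gamma^\gamma_{\alpha\beta}\dot z^\alpha\dot z^\beta$ interacts with the curvature term $c(\phi)$ in a way that, after applying the Schwarz/Cauchy inequality \emph{correctly} (the whole point of the footnote: $ab\le\frac12(a^2+b^2)$, not $\le\frac14(a^2+b^2)$), leaves a strictly positive remainder. Finally, invoke the nonnegativity of $(\mathrm{Id}-\n(\mc L-\mc G\mc L^{-1}\o{\mc G})^{-1}\n^*)$ from Lemmas \ref{lemma9} and \ref{lemma7} and of $c(\phi)_{\alpha\b\beta}$ to conclude $\n^2 E(\dot z,\dot z)>0$.

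I expect the main obstacle to be the bookkeeping around the Weil--Petersson Christoffel terms and the $(2,0)$-Hessian: one must show that the holomorphic Hessian $\frac{\p^2 E}{\p z^\alpha\p z^\beta}$ together with the connection correction $-\Gamma^\gamma_{\alpha\beta}\frac{\p E}{\p z^\gamma}$ does not introduce a negative contribution larger in absolute value than the positive pieces extracted from $\frac{\p^2 E}{\p z^\alpha\p\b z^\beta}$. Morally this is where Yamada's argument went wrong, and the fix is that the positive term $\frac12\int_M c(\phi)_{\alpha\b\beta}|du|^2\dot z^\alpha\b{\dot z}^\beta$ is comfortably large once the Schwarz inequality is applied with the sharp constant; but making this quantitative requires either an explicit expression for the Weil--Petersson curvature tensor contracted against $\dot z$ (via the harmonic Beltrami differentials, as in Wolpert's formula) or a clean algebraic identity relating $A_\alpha$, $c(\phi)$, and the geodesic equation. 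I would carry this out by first treating the model case $\dim M=1$ using Proposition \ref{prop2} as a sanity check (there it reduces to Wolpert/Wolf's geodesic-length convexity), and then promoting the argument to general $M$ by replacing scalar quantities with their $\n^*,\n$-operator analogues and using that the resolvent operators appearing are self-adjoint and nonnegative.
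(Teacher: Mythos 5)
There is a genuine gap here, and it sits exactly where you flag your ``main obstacle'': you never actually control the $(2,0)$-part of the Hessian together with the Weil--Petersson Christoffel correction, and nothing in the paper's machinery does it for you. Plurisubharmonicity of $E$ (Theorems \ref{thm0.3}--\ref{thm0.4}) only constrains the $(1,1)$-component $\frac{\p^2E}{\p z^{\alpha}\p\b{z}^{\beta}}$; the real Hessian along a geodesic also contains $2\Re\bigl(\frac{\p^2E}{\p z^{\alpha}\p z^{\beta}}\dot z^{\alpha}\dot z^{\beta}-\Gamma^{\gamma}_{\alpha\beta}\frac{\p E}{\p z^{\gamma}}\dot z^{\alpha}\dot z^{\beta}\bigr)$, which has no sign a priori. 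Your hope that differentiating $\frac{\p E}{\p z^{\alpha}}=\langle A_{\alpha},du\rangle$ holomorphically yields a ``manifestly controlled expression (no Laplacian resolvent)'' is not borne out: the $\p_{\beta}$-derivative of $du$ is the variation field $W$, which is determined only through $(\mc L-\mc G\mc L^{-1}\o{\mc G})^{-1}\n^*A$, so the resolvent reappears with indefinite sign. For the geodesic length function the passage from $\p\b\p\ell>0$ to $\ddot\ell>0$ already requires Wolpert's special comparison $\p\b{\p}\ell\leq\ddot\ell\leq 3\p\b{\p}\ell$ (quoted in the proof of Corollary \ref{geodesic}); no analogue for the general energy function is known or supplied, so your scheme does not close. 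You have also misdiagnosed the Schwarz-inequality issue: Yamada's error (and its fix) lives in the real variational estimate of $\delta^2E(u_0)(W_0,W_0)$, not in any complex-Hessian bookkeeping.

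The paper's proof of Theorem \ref{thm0.5} avoids the complex Hessian entirely. It lifts the Weil--Petersson geodesic to a path of hyperbolic metrics $\Phi_t$ and uses Wolf's second-order expansion (\ref{WPG}), whose $t^2$-coefficient involves $\alpha=-(\Delta-2)^{-1}\frac{2|q|^2}{\phi_0^2}\geq\frac13\frac{|q|^2}{\phi_0^2}$. Then $\frac{d^2}{dt^2}E(t)|_{t=0}=\frac12\int_M\mathrm{Tr}_g(u_0^*\ddot\Phi_0)\,d\mu_g-\delta^2E(u_0)(W_0,W_0)$, and the negative term is absorbed by applying Cauchy--Schwarz with the correct constant together with the inequality $\|\n W_0\|^2\leq\delta^2E(u_0)(W_0,W_0)$, leaving the strictly positive remainder $\int_M\alpha\,\mathrm{Tr}_g(u_0^*\Phi_0)\,d\mu_g>0$. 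If you want to salvage your route, you would need to prove a genuinely new comparison between $\ddot E$ and $\p\b\p E$ for harmonic-map energies; as written, the argument does not establish the theorem.
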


Our major method is simply to use
the splitting of the
the tensor  $\nabla W$, $W=u'(0)$, for a family $u(t): M\to (\Sigma, \Phi_t)$
of harmonic maps along a geodesic under
the decomposition of $T_{u(0, x)}\Sigma =T^{(1, 0)}
+T^{(0, 1)}$, along with the following expansion for hyperbolic metrics $\Phi_t$ along the Weil-Petersson geodesic in $\mc{T}$,
\begin{multline}\label{WPG-0}
	\Phi_t=\phi_0dvd\b{v}+t(q dv^2+\o{q}d\b{v}^2)\\
	+t^2/2\left(\frac{2|q|^2}{\phi^2_0}-2(\Delta-2)^{-1}\frac{2|q|^2}{\phi^2_0}\right)\phi_0dvd\b{v}+O(t^4),
\end{multline}
(see \cite[(3.4)]{Wolf}).  Here $qdv^2$ is a holomorphic quadratic form, $\phi_0dvd\b{v}$ is a hyperbolic metric. It is also noticed in \cite{Toledo} 
that the splitting above is critical in proving the
plurisubharmonicity for the energy of harmonic maps $u(z): \mathcal X_z\to M$.

As a corollary, we prove that  
\begin{cor}\label{corY1}
The function $E(t)^{c}, c>5/6$ (resp. $c=5/6$) is strictly convex (resp. convex) along a Weil-Petersson geodesic. 	
\end{cor}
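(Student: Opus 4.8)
The plan is to reduce the statement to a pointwise inequality on the one-variable function $E(t)$ along a fixed Weil-Petersson geodesic, combining Theorem \ref{thm0.5} with a quantitative lower bound on $E''(t)$ that controls $(E')^2/E$. Recall the elementary identity
\[
\frac{d^2}{dt^2}E(t)^c = c\,E(t)^{c-2}\Bigl(E(t)E''(t) + (c-1)E'(t)^2\Bigr),
\]
so for $c\in(0,1)$ one has $E(t)^c$ (strictly) convex as soon as
\[
E(t)E''(t) \;\geq\; (1-c)\,E'(t)^2
\]
holds (strictly). Thus everything hinges on establishing an inequality of the shape $E E'' \geq \tfrac16 (E')^2$, with strictness, and the threshold $c = 5/6$ appears precisely because $1-c = 1/6$.

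First I would revisit the computation behind Theorem \ref{thm0.5}: in proving strict convexity of $E(t)$ one writes, at the base point of the geodesic, the second derivative $E''(0)$ as a sum of manifestly nonnegative terms coming from (i) the splitting of $\nabla W$ into its $(1,0)$ and $(0,1)$ parts, and (ii) the second-order term in the expansion \eqref{WPG-0} of $\Phi_t$, which contributes a term of the form $\int_M \bigl(\tfrac{2|q|^2}{\phi_0^2} - 2(\Delta-2)^{-1}\tfrac{2|q|^2}{\phi_0^2}\bigr)|du|^2\,d\mu_g$; the operator $2(\Delta-2)^{-1}$ on functions has norm at most $1$ on the relevant subspace, so this term is controlled below by a positive multiple of $\int_M |q|^2\phi_0^{-2}|du|^2$. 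Meanwhile $E'(0) = \langle A, du\rangle$ (Theorem \ref{thm0.3}, adapted to the real parameter $t$, so that $E'(0)$ is essentially $2\,\mathrm{Re}\langle A, du\rangle$), and by Cauchy-Schwarz $E'(0)^2 \leq 4\|A\|^2\|du\|^2 = 8\|A\|^2 E(0)$. The key step is then to show that the nonnegative terms assembled in $E''(0)$ dominate $\tfrac16\cdot\tfrac{E'(0)^2}{E(0)} \leq \tfrac{8}{6}\|A\|^2 = \tfrac43\|A\|^2$; equivalently that $E(0)E''(0) - \tfrac16 E'(0)^2$ is a sum of squares. I expect this to come out of bounding the $(0,1)$-component of $\nabla W$ and the harmonic part of $A$ against $\|A\|^2$, using that $\langle A, du\rangle$ only sees the component of $A$ along the harmonic form $du$ (Proposition \ref{prop1}), exactly as in \eqref{0.4}.

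The delicate point — and the reason the constant is $5/6$ and not something better — is extracting the sharp numerical constant from the interplay of the two sources of positivity in $E''$: the term from $\nabla W$ and the term from the curvature of $\Phi_t$ along the geodesic. The main obstacle will be to verify that the worst case (the configuration minimizing $EE'' - (1-c)(E')^2$ over all admissible data $q$, $W$, $du$) is attained when $du$ is, up to scale, proportional to the relevant extremal form, and to check that at that configuration the ratio is exactly $1/6$. This is a finite-dimensional-looking optimization once the infinite-dimensional parts (the $\Delta$-dependent operators) are estimated by their operator norms, but it requires care to ensure no slack is lost. Once the inequality $EE'' \geq \tfrac16 (E')^2$ is proven with equality only in degenerate cases, strict convexity of $E(t)^c$ for $c > 5/6$ and convexity for $c = 5/6$ follow immediately from the displayed second-derivative identity, since for $c>5/6$ we gain a strictly positive term $c\,E^{c-2}(E E'' - (1-c)(E')^2) > 0$ using that $E''>0$ strictly (Theorem \ref{thm0.5}) whenever $E' = 0$, and the combined estimate otherwise.
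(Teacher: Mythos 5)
Your reduction is the same as the paper's: the identity $\frac{d^2}{dt^2}E^c=cE^{c-2}\bigl((c-1)(E')^2+EE''\bigr)$ shows that everything rests on the inequality $(E')^2\le 6\,E\,E''$, and the threshold $5/6$ is exactly $1-\tfrac16$. But you do not actually establish this inequality, and the route you sketch for it would not produce the constant $6$. Two concrete problems. First, along a real Weil--Petersson geodesic the relevant first-variation formula is $E'(0)=\mathrm{Re}\int_M g^{ij}u^v_iu^v_j\,q\,d\mu_g$, a pairing against the holomorphic quadratic differential $q$, not $\langle A,du\rangle$ (which lives in the complex-parameter setting of Theorem \ref{thm0.3}). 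The useful Cauchy--Schwarz is the weighted one,
\begin{align*}
\Bigl|\int_M g^{ij}u^v_iu^v_j q\,d\mu_g\Bigr|^2\le \int_M g^{ij}u^v_i\o{u^v_j}\phi_0\,d\mu_g\cdot\int_M g^{ij}u^v_i\o{u^v_j}\frac{|q|^2}{\phi_0}\,d\mu_g
=2E\int_M\frac{|q|^2}{\phi_0^2}\,\mathrm{Tr}_g(u^*\Phi_0)\,d\mu_g,
\end{align*}
chosen precisely so that the second factor is the quantity that $E''$ controls. Your bound $E'(0)^2\le 8\|A\|^2E(0)$ disconnects the two sides of the inequality and leaves you with the unproven ``optimization'' you acknowledge at the end.

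Second, your source of positivity for $E''$ is misidentified. In the proof of Theorem \ref{thm0.5} the term $\tfrac{2|q|^2}{\phi_0^2}$ in $\ddot\Phi_0$ is \emph{entirely consumed} in cancelling the upper bound on $\delta^2E(u_0)(W_0,W_0)$ coming from the Schwarz inequality; what survives is $E''\ge\int_M\alpha\,\mathrm{Tr}_g(u_0^*\Phi_0)\,d\mu_g$ with $\alpha=-(\Delta-2)^{-1}\tfrac{2|q|^2}{\phi_0^2}$. The input you need is the \emph{pointwise} lower bound $\alpha\ge\tfrac13\tfrac{|q|^2}{\phi_0^2}$ (Wolf's Lemma 5.1, a maximum-principle estimate for $\Delta-2$), which gives $\int_M\tfrac{|q|^2}{\phi_0^2}\mathrm{Tr}_g(u_0^*\Phi_0)\le 3E''$ and hence $(E')^2\le 2E\cdot 3E''=6EE''$. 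An operator-norm bound ``$\|2(\Delta-2)^{-1}\|\le1$'' gives at best $\alpha\ge0$ (and in the wrong topology for a pointwise statement), which loses the factor $\tfrac13$ and with it the threshold $5/6$. So the skeleton of your argument is right, but the key inequality is a genuine gap: it is closed by the weighted Cauchy--Schwarz above together with Wolf's spectral estimate, not by a norm bound or an abstract extremal-configuration analysis.
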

Another corollary of Theorem \ref{thm0.5} is a positive answer to the Nielsen realization problem, which was proved by Kerckhoff \cite{Kerockhoff}.
\begin{cor}[{\cite[Theorem 5]{Kerockhoff}}]\label{corY2}
Any finite subgroup of the mapping class group of a closed surface $\Sigma$ of genus greater than 1 can be realized as an isometry subgroup
of some hyperbolic metric on $\Sigma$.
\end{cor}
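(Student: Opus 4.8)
The plan is to deduce a fixed point of $G$ in $\mc{T}$ from a suitable $G$-invariant, proper, strictly convex function, and then to read off the hyperbolic metric from that fixed point. First I would fix a Riemannian manifold $(M^n,g)$ and a continuous map $u_0:M\to \mc{X}_{z_0}$ with $(u_0)_*:\pi_1(M)\to \pi_1(\mc{X}_{z_0})$ surjective --- for instance $M=\Sigma$ with $u_0$ the identity --- and let $E(z)$ be the corresponding energy function of \eqref{0.1}. The mapping class group $\mathrm{Mod}(\Sigma)$ acts on $\mc{T}$, and this action lifts compatibly to the Teichm\"uller curve $\pi:\mc{X}\to\mc{T}$, so that for $\gamma\in G$ the composition $E_\gamma(z):=E(\gamma\cdot z)$ is again the energy of the harmonic map into $\mc{X}_{\gamma\cdot z}$ (homotopic to $\gamma\cdot u_0$). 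Then
\[
F(z):=\sum_{\gamma\in G}E_\gamma(z)=\sum_{\gamma\in G}E(\gamma\cdot z)
\]
is a smooth $G$-invariant function on $\mc{T}$, since $G$ permutes its own elements.

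Next I would establish the two analytic properties of $F$. By Wolpert \cite{Wol1, Wol2, Wol3}, $\mathrm{Mod}(\Sigma)$ acts on $\mc{T}$ by Weil--Petersson isometries, hence each $\gamma$ sends Weil--Petersson geodesics to Weil--Petersson geodesics; combined with Theorem \ref{thm0.5} this shows that every $E_\gamma=E\circ\gamma$ is strictly convex along Weil--Petersson geodesics, and therefore so is the finite sum $F$. For properness, $E$ is proper by \cite[Proposition 3.2.1]{Yamada} (using that $(u_0)_*$ is surjective), as in Corollary \ref{stein}; since $F\geq E$ pointwise on $\mc{T}$, every sublevel set of $F$ is contained in a compact sublevel set of $E$, so $F$ is proper as well.

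Now I would invoke that $\mc{T}$ is geodesically convex for the Weil--Petersson metric (Wolpert): any two points of $\mc{T}$ are joined by a Weil--Petersson geodesic lying in $\mc{T}$. Properness and lower boundedness of $F$ give a minimizer $z^\ast\in\mc{T}$, and strict convexity along Weil--Petersson geodesics forces this minimizer to be unique (two distinct minima would make the midpoint of the connecting geodesic have strictly smaller value). Since $F$ is $G$-invariant and $z^\ast$ is its unique minimizer, $\gamma\cdot z^\ast=z^\ast$ for every $\gamma\in G$. Finally, a mapping class fixing the marked point $z^\ast$ is realized by a biholomorphic automorphism of $\mc{X}_{z^\ast}$, which by uniqueness of the hyperbolic metric in its conformal class is an isometry of $(\mc{X}_{z^\ast},\Phi_{z^\ast})$; for genus $\geq 2$ no nontrivial mapping class is realized by an isometry isotopic to the identity, so $\gamma\mapsto(\text{this isometry})$ is an injective homomorphism $G\hookrightarrow\mathrm{Isom}(\mc{X}_{z^\ast},\Phi_{z^\ast})$, which is the assertion.

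The main obstacle is not the convexity itself --- that is exactly Theorem \ref{thm0.5} --- but ensuring the minimizer actually exists and stays inside $\mc{T}$ rather than escaping to the incomplete Weil--Petersson boundary; this is precisely why the properness input of Yamada and Wolpert's geodesic convexity of $\mc{T}$ are both essential. A secondary point requiring care, though routine, is checking that the $\mathrm{Mod}(\Sigma)$-action lifts to $\mc{X}\to\mc{T}$ so that $E(\gamma\cdot z)$ genuinely equals the energy into $\mc{X}_{\gamma\cdot z}$, and that the resulting homomorphism $G\to\mathrm{Isom}(\mc{X}_{z^\ast},\Phi_{z^\ast})$ is injective.
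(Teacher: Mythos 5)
Your argument is correct, and it reaches the fixed point by a route that differs from the paper's in two concrete ways. The paper takes a $G$-invariant \emph{filling collection of closed curves} $G\gamma$ and minimizes $\sum_{\alpha\in G\gamma}\ell(\alpha(z))^2/\ell(\alpha(0))$, i.e.\ it stays inside the one-dimensional case of Theorem \ref{thm0.5} (domains are circles, so each summand is a normalized squared length via (\ref{Length-Energy})), gets $G$-invariance by choosing the curve system to be $G$-invariant, and gets properness from Kerckhoff's lemma that the length function of a filling system is proper. You instead take a single energy function with $(u_0)_*$ surjective (e.g.\ $u_0=\mathrm{id}_\Sigma$), obtain $G$-invariance by averaging $F=\sum_{\gamma\in G}E(\gamma\cdot z)$ over the group, justify convexity of each translate via the fact that $\mathrm{Mod}(\Sigma)$ acts by Weil--Petersson isometries, and get properness from Yamada's criterion as in Corollary \ref{stein}. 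Both proofs then rest on the same two external inputs --- Wolpert's geodesic convexity of $(\mc{T},\omega^{WP})$ to guarantee the minimizer exists in $\mc{T}$ and is unique, and the identification of the stabilizer of a point of $\mc{T}$ with the automorphism (= isometry) group of the corresponding hyperbolic surface. Your version avoids having to produce a $G$-invariant filling system and is slightly more uniform (it works verbatim for any $u_0$ with surjective $(u_0)_*$), at the cost of invoking Yamada's properness theorem where the paper only needs the elementary properness of filling length functions; you are also more explicit than the paper about the final injectivity step $G\hookrightarrow\mathrm{Isom}(\mc{X}_{z^\ast},\Phi_{z^\ast})$, which the paper leaves implicit.
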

This article is organized as follows. In Section \ref{sec1}, we fix notation and recall
some basic facts on Teichm\"uller curve, Hodge-Laplace operator and Harmonic maps.
In Section \ref{sec2}, we compute the first and second variations of the energy function (\ref{0.1}) and prove Theorem \ref{thm0.3}, \ref{thm0.4} and Proposition \ref{prop2}. In Subsection \ref{subsec2.3}, we will show the strict plurisubharmonicity of logarithmic energy and prove Theorem \ref{main theorem}, Corollary \ref{cor0.1}, \ref{geodesic length}, \ref{stein}. In the last section, we give a simple proof on  convexity of the energy function along Weil-Petersson geodesic, i.e. Theorem \ref{thm0.5}, and then prove Corollary \ref{corY1}, \ref{corY2}.

\vspace{5mm}
{\bf Acknowledgment:} This work was begun when the second and the third authors were visiting the first author at Korea Institute for Advanced Study (KIAS) during June 2018-July 2018. We thank  KIAS for its support and 
for providing excellent working
environment.

\section{Preliminaries}\label{sec1}
\subsection{Teichm\"uller curve}\label{subsec1.1}
The results in this subsection are well-known.
Let $\mc{T}$ be Teichm\"uller space of a fixed surface of genus
$g\geq 2$. Let $\pi:\mc{X}\to \mc{T}$ be Teichm\"uller curve over Teichm\"uller space $\mc{T}$, namely the holomorphic family
of Riemann surfaces over $\mc{T}$,  the fiber $\mc{X}_z:=\pi^{-1}(z)$ being exactly the Riemann surface given by the complex structure $z\in\mc{T}$; see e.g. \cite[Section 5]{Ahlfors}. Denote by
$$(z;v)=(z^1,\cdots, z^m; v)$$
the local holomorphic coordinates of $\mc{X}$ with $\pi(z,v)=z$, where
$z=(z^1,\cdots z^m)$ denotes the local coordinates of $\mc{T}$ and $v$
denotes the local coordinates of Riemann surface $\mc{X}_z$,
$m=3g-3=\dim_{\mathbb C} \mc{T}$.  Let $K_{\mc{X}/\mc{T}}$ denote the  relative canonical line bundle over $\mc{X}$, when restricts to each fiber $K_{\mc{X}/\mc{T}}|_{\mc{X}_z}=K_{\mc{X}_z}$. The fibers $\mc{X}_z$ are equipped with hyperbolic metric 
\begin{align*}
\omega_{\mc{X}_z}=\sqrt{-1}\phi_{v\b{v}}dv\wedge d\b{v}
\end{align*}
depending smoothly on the parameter $z$ and having negative constant curvature $-1$, namely, 
\begin{align}\label{1.1}
	\p_v\p_{\b{v}}\log\phi_{v\b{v}}=\phi_{v\b{v}},
\end{align}
where $\phi_{v\b{v}}:=\p_v\p_{\b{v}}\phi$.
From (\ref{1.1}), up to a scaling
function on $\mc{T}$  a metric (weight) $\phi$ on $K_{\mc{X}/\mc{T}}$ can
be chosen such that 
\begin{align}\label{1.2}
	e^{\phi}=\phi_{v\b{v}}.
\end{align}
For convenience,  we denote
$$\phi_{\alpha}:=\frac{\p \phi}{\p z^{\alpha}},\quad \phi_{\b{\beta}}:=\frac{\p \phi}{\p \b{z}^{\beta}},
\quad \phi_{v}:=\frac{\p \phi}{\p v},\quad \phi_{\b{v}}:=\frac{\p \phi}{\p \b{v}},$$
where $1\leq \alpha,\beta\leq m$.
Denote 
$\omega=\sqrt{-1}\p\b{\p}\phi$. With respect to the $(1,1)$-form $\omega$,
we have a canonical horizontal-vertical decomposition of $T\mc{X}$, $T\mc{X}=\mc{H}\oplus \mc{V}$, where
\begin{align*}
\mc{H}=\text{Span}\left\{\frac{\delta}{\delta z^{\alpha}}=\frac{\p}{\p z^{\alpha}}+a^v_\alpha \frac{\p}{\p v}, 1\leq \alpha\leq m\right\},\quad \mc{V}=\text{Span}\left\{\frac{\p}{\p v}\right\},
\end{align*}
where \begin{align}\label{1.3}
a^v_{\alpha}=-\phi_{\alpha\b{v}}\phi^{v\b{v}},
\end{align}
and $\phi^{v\b{v}}=(\phi_{v\b{v}})^{-1}$.
By duality,  $T^*\mc{X}=\mc{H}^*\oplus \mc{V}^*$, where 
\begin{align*}
\mc{H}^*=\text{Span}\left\{dz^{\alpha}, 1\leq \alpha\leq m\right\},\quad \mc{V}^*=\text{Span}\left\{\delta v=dv-a^v_{\alpha}dz^{\alpha}\right\}.
\end{align*}
Moreover, the differential operators
\begin{align}\label{HV}
\p^V=\frac{\p}{\p v}\otimes \delta v,\quad \p^H=\frac{\delta}{\delta z^{\alpha}}\otimes dz^{\alpha}, \quad \b{\p}^V=\frac{\p}{\p\bar v}\otimes \delta \bar v, \quad \b{\p}^H=\frac{\delta}{\delta \bar z^{\alpha}}\otimes d\bar z^{\alpha}
\end{align}
are well-defined. The following lemma can be proved by direct computations.
\begin{lemma}[{\cite[Lemma 1.1]{Wan1}}]\label{lemma0}
We have the following decomposition of the K\"a{}hler form $\omega$:
\begin{align*}
\omega=\sqrt{-1}\p\b{\p}\phi=c(\phi)+\sqrt{-1}\phi_{v\b{v}}\delta v\wedge \delta\b{v},	
\end{align*}
where $c(\phi)=\sqrt{-1}c(\phi)_{\alpha\b{\beta}}dz^{\alpha}\wedge d\b{z}^{\beta}$, $c(\phi)_{\alpha\b{\beta}}=\phi_{\alpha\b{\beta}}-\phi^{v\b{v}}\phi_{\alpha\b{v}}\phi_{v\b{\beta}}$. 
\end{lemma}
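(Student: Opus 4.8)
The statement to prove is Lemma~\ref{lemma0}: the decomposition $\omega = \sqrt{-1}\,\partial\bar\partial\phi = c(\phi) + \sqrt{-1}\,\phi_{v\bar v}\,\delta v \wedge \delta\bar v$ with $c(\phi)_{\alpha\bar\beta} = \phi_{\alpha\bar\beta} - \phi^{v\bar v}\phi_{\alpha\bar v}\phi_{v\bar\beta}$. The plan is to simply expand $\sqrt{-1}\,\partial\bar\partial\phi$ in the coordinate frame $(z^\alpha, v)$ and then re-express it in the adapted coframe $\{dz^\alpha, \delta v = dv - a^v_\alpha dz^\alpha\}$, checking that the mixed terms reorganize exactly into the two claimed pieces.

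\begin{proof}
Since $\phi$ is a real-valued function on $\mathcal X$ with coordinates $(z^1,\dots,z^m;v)$, we have
\begin{align*}
\sqrt{-1}\,\partial\bar\partial\phi = \sqrt{-1}\Big(\phi_{\alpha\bar\beta}\,dz^\alpha\wedge d\bar z^\beta + \phi_{\alpha\bar v}\,dz^\alpha\wedge d\bar v + \phi_{v\bar\beta}\,dv\wedge d\bar z^\beta + \phi_{v\bar v}\,dv\wedge d\bar v\Big).
\end{align*}
Now substitute $dv = \delta v + a^v_\alpha\,dz^\alpha$ and $d\bar v = \delta\bar v + \overline{a^v_\beta}\,d\bar z^\beta$ into the last three groups of terms. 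Collecting the coefficient of $\delta v\wedge\delta\bar v$ gives $\phi_{v\bar v}$, so the purely vertical part is $\sqrt{-1}\,\phi_{v\bar v}\,\delta v\wedge\delta\bar v$ as claimed. The genuinely mixed terms $\delta v\wedge d\bar z^\beta$ and $dz^\alpha\wedge\delta\bar v$ carry coefficients $\phi_{v\bar\beta} + \phi_{v\bar v}\,\overline{a^v_\beta}$ and $\phi_{\alpha\bar v} + \phi_{v\bar v}\,a^v_\alpha$ respectively; by the definition \eqref{1.3} of $a^v_\alpha = -\phi_{\alpha\bar v}\phi^{v\bar v}$ (and its conjugate $\overline{a^v_\beta} = -\phi_{v\bar\beta}\phi^{v\bar v}$, using that $\phi^{v\bar v}$ is real), each of these coefficients vanishes identically. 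Finally the coefficient of $dz^\alpha\wedge d\bar z^\beta$ becomes $\phi_{\alpha\bar\beta} + \phi_{\alpha\bar v}\,\overline{a^v_\beta} + \phi_{v\bar\beta}\,a^v_\alpha + \phi_{v\bar v}\,a^v_\alpha\,\overline{a^v_\beta}$; substituting $a^v_\alpha = -\phi_{\alpha\bar v}\phi^{v\bar v}$ and $\overline{a^v_\beta} = -\phi_{v\bar\beta}\phi^{v\bar v}$ and simplifying, three of the four terms combine to leave $\phi_{\alpha\bar\beta} - \phi^{v\bar v}\phi_{\alpha\bar v}\phi_{v\bar\beta} = c(\phi)_{\alpha\bar\beta}$. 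This is exactly the horizontal part $c(\phi)$, completing the decomposition.
\end{proof}

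There is essentially no obstacle here: the only point requiring a moment's care is the bookkeeping of the mixed $dz\wedge\delta\bar v$ and $\delta v\wedge d\bar z$ terms, where one must use both $a^v_\alpha = -\phi_{\alpha\bar v}\phi^{v\bar v}$ and its complex conjugate, together with the reality of $\phi^{v\bar v} = (\phi_{v\bar v})^{-1}$, to see that they cancel; once that is observed, the remaining $dz^\alpha\wedge d\bar z^\beta$ coefficient collapses algebraically to $c(\phi)_{\alpha\bar\beta}$. One may alternatively phrase the whole computation invariantly by noting that $\delta v$ is precisely the $(1,0)$-form dual to the horizontal lift $\delta/\delta z^\alpha$ of $\partial/\partial z^\alpha$ under the metric $\phi_{v\bar v}$, i.e.\ that $\langle \delta v, \delta/\delta z^\alpha\rangle$ and $\langle dz^\alpha, \partial/\partial v\rangle$ vanish by construction, which makes the vanishing of the mixed terms automatic and identifies $c(\phi)$ as the restriction of $\omega$ to the horizontal distribution $\mathcal H$.
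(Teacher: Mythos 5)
Your proof is correct and is precisely the ``direct computation'' the paper alludes to: the paper itself gives no argument for Lemma~\ref{lemma0}, merely citing \cite[Lemma 1.1]{Wan1}, and your expansion in the adapted coframe $\{dz^{\alpha},\delta v\}$, with the mixed terms cancelling via $a^v_{\alpha}=-\phi_{\alpha\b{v}}\phi^{v\b{v}}$ and the reality of $\phi$, is the standard verification. The algebra in the $dz^{\alpha}\wedge d\b{z}^{\beta}$ coefficient checks out ($-2\phi^{v\b{v}}\phi_{\alpha\b{v}}\phi_{v\b{\beta}}+\phi^{v\b{v}}\phi_{\alpha\b{v}}\phi_{v\b{\beta}}=-\phi^{v\b{v}}\phi_{\alpha\b{v}}\phi_{v\b{\beta}}$), so nothing is missing.
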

 We  consider the following tensor
 \begin{align}
   \label{A-pre}
\b{\p}^V\frac{\delta}{\delta z^{\alpha}}=(\p_{\b{v}}a^v_{\alpha}) \frac{\p}{\p v}\otimes \delta\b{v}\in A^0(\mc{X}, \text{End}(\mc{V})).
\end{align}
By Lemma \ref{lemma1} (iii) we see 
that its restriction to each fiber
is a harmonic element
representating  the Kodaira-Spencer class $\rho(\frac{\p}{\p z^{\alpha}})$,
$\rho: T_z\mc{T}\to H^1(\mc{X}_z,T_{\mc{X}_z})$ being
 the Kodaira-Spencer map.
We denote its component and its dual
with respect to the metric 
$\sqrt{-1}\phi_{v\b{v}}\delta v\wedge \delta \b{v}$ as  
\begin{align}\label{1.4}
A^v_{\alpha\b{v}}=\p_{\b{v}}a^v_\alpha=\p_{\b{v}}(-\phi^{v\b{v}}\phi_{\alpha\b{v}}),\quad A_{\alpha\b{v}\b{v}}=A^v_{\alpha\b{v}}\phi_{v\b{v}}.	 
\end{align}
Note that $$(\mc{V},\sqrt{-1}\phi_{v\b{v}}\delta v\wedge \delta \b{v})$$ is a Hermitian vector bundle over $\mc{X}$ as well as $\text{End}(\mc{V})=\mc{V}\otimes\mc{V}^*$.
We denote by 
$\n_v, \n_{\b{v}}$ the covariant derivatives  along the directions 
$\p/\p v, \p/\p\b{v}$, respectively. 
For convenience, we also denote by ${}_{;v}, {}_{;\b{v}}$ the
covariant derivatives $\n_v, \n_{\b{v}}$.

From \cite[Proposition 2.1, Lemma 2.2, Lemma 5.2]{AS} and \cite[Theorem 1, Proposition 3]{Sch}, we have the following lemma; 
we provide for the first four identities.
\begin{lemma}[\cite{AS,Sch}]\label{lemma1}
The following identities
 hold:
\begin{enumerate}
  \item[(i)] $a^v_{\alpha_;vv}=-\phi_{\alpha v}$;
  \item[(ii)] $A_{\alpha\b{v}\b{v}}=-\n_{\b{v}}\phi_{\alpha\b{v}}=-\phi_{\alpha\b{v};\b{v}}$, $a^v_{\alpha;\bar v}=A_{\alpha \bar v\bar v}\phi^{v\bar v}$;
  \item[(iii)] $\p_vA_{\alpha\b{v}\b{v}}=A_{\alpha\b{v}\b{v};v}=0$;
  \item[(iv)] $\frac{\p}{\p \b{z}^{\beta}}A_{\alpha\b{v}\b{v}}=-c(\phi)_{\alpha\b{\beta};\b{v}\b{v}}-2A_{\alpha\b{v}\b{v}}\o{a^v_{\beta;v}}-A_{\alpha\b{v}\b{v};\b{v}}\o{a^v_\beta}$;
  \item[(v)] $(\Box+1)c(\phi)_{\alpha\b{\beta}}=A^{v}_{\alpha\b{v}}A^{\b{v}}_{\b{\beta}v}$ where $\Box=-\phi^{v\b{v}}\p_v\p_{\b{v}}$ and   $c(\phi)_{\alpha\b{\beta}}=\phi_{\alpha\b{\beta}}-\phi^{v\b{v}}\phi_{\alpha\b{v}}\phi_{v\b{\beta}}$;
  \item[(vi)] $c(\phi)\geq 0$, and $c(\phi)>0$ if the family is not infinitesimally trivial. In particular, for Teichm\"uller curve $\pi:\mc{X}\to\mc{T}$, one has 
  $$c(\phi)\geq P_1(d(\mc{X}_z))\pi^*\omega^{WP}>0$$
  along the horizontal directions,
  where $P_1(d(\mc{X}_z))$ is a strictly positive function  depending on the diameter $d(\mc{X}_z)$, and $\omega^{WP}$ is the Weil-Petersson metric on Teichm\"uller space $\mc{T}$.  \\
\end{enumerate}
\end{lemma}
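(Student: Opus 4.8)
The plan is to establish (i)--(iv) by direct computation in the coordinates $(z;v)$, and simply to quote (v)--(vi) from \cite{AS,Sch}. Two structural facts do the work. First, the fibrewise K\"ahler--Einstein equation (\ref{1.1}) together with the normalization (\ref{1.2}) says that $\phi_{v\b v}=e^{\phi}$ holds as functions on the total space $\mc X$, i.e. $\p_v\p_{\b v}\phi=e^{\phi}$; differentiating this in the base variables (partial derivatives commute) yields the reduction formulas
\[
\phi_{\alpha v\b v}=e^{\phi}\phi_{\alpha},\qquad \phi_{\b\beta v\b v}=e^{\phi}\phi_{\b\beta},\qquad \phi_{\alpha\b\beta v\b v}=e^{\phi}\bigl(\phi_{\alpha\b\beta}+\phi_{\alpha}\phi_{\b\beta}\bigr),
\]
which remove every mixed derivative of $\phi$ carrying both a $v$ and a $\b v$. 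Second, on the one-dimensional fibres the Chern connection of $(\mc V,\phi_{v\b v})$ has the single Christoffel symbol $\Gamma^{v}_{vv}=\p_v\log\phi_{v\b v}=\phi_v$, with conjugate $\phi_{\b v}$ on $\o{T\mc{X}_z}$; hence $\n_v$ acts on a fibre tensor of holomorphic $v$-weight $w$ (number of upper $v$'s minus number of lower $v$'s) as $\p_v+w\phi_v$, and $\n_{\b v}=\p_{\b v}+w'\phi_{\b v}$ with $w'$ the antiholomorphic $v$-weight. In particular $\n_v$ is just $\p_v$ on $\mathrm{End}(\mc V)$-type and on purely antiholomorphic fibre tensors, while $\n_{\b v}$ is just $\b\p$ on $\mc V$-components.

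With these in hand (i)--(iii) are short. For (i): $a^v_{\alpha;v}=\p_v a^v_\alpha+\phi_v a^v_\alpha$; expanding $a^v_\alpha=-\phi^{v\b v}\phi_{\alpha\b v}$ the $\phi_v$-terms cancel and the first reduction formula collapses $-\phi^{v\b v}\phi_{\alpha v\b v}$ to $-\phi_\alpha$, so $a^v_{\alpha;v}=-\phi_\alpha$; since $a^v_{\alpha;v}$ has holomorphic $v$-weight $0$, applying $\n_v=\p_v$ once more gives $a^v_{\alpha;vv}=-\phi_{\alpha v}$. For (ii): the second identity is immediate from (\ref{1.4}), as $a^v_{\alpha;\b v}=\p_{\b v}a^v_\alpha=A^v_{\alpha\b v}$ and $A_{\alpha\b v\b v}=A^v_{\alpha\b v}\phi_{v\b v}$; and expanding $A_{\alpha\b v\b v}=\phi_{v\b v}\,\p_{\b v}(-\phi^{v\b v}\phi_{\alpha\b v})=\phi_{\b v}\phi_{\alpha\b v}-\phi_{\alpha\b v\b v}$ and comparing with $-\phi_{\alpha\b v;\b v}=-(\p_{\b v}\phi_{\alpha\b v}-\phi_{\b v}\phi_{\alpha\b v})$ gives the first identity. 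For (iii): $A_{\alpha\b v\b v;v}=\p_v A_{\alpha\b v\b v}$ (no holomorphic $v$-indices), and substituting $A_{\alpha\b v\b v}=\phi_{\b v}\phi_{\alpha\b v}-\phi_{\alpha\b v\b v}$, differentiating in $v$ and applying the reduction formula makes all terms cancel; conceptually this is the statement that $\o{A_{\alpha\b v\b v}}\,dv^{2}$ is a holomorphic quadratic differential, which is exactly why the representative (\ref{A-pre}) is harmonic on each fibre.

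The substance is (iv). Starting from $A_{\alpha\b v\b v}=-\phi_{\alpha\b v;\b v}$ (equivalently from $A_{\alpha\b v\b v}=\phi_{\b v}\phi_{\alpha\b v}-\phi_{\alpha\b v\b v}$), I would apply $\p/\p\b z^{\beta}$ and then rewrite each term: use the reduction formulas to kill the mixed fourth- and fifth-order derivatives, and use $\phi_{\alpha\b v}=-\phi_{v\b v}a^v_\alpha$ and $\phi_{v\b\beta}=-\phi_{v\b v}\o{a^v_\beta}$ to trade Hessian components for the lift coefficients. Expanding the fibre-covariant second derivative $c(\phi)_{\alpha\b\beta;\b v\b v}$ of $c(\phi)_{\alpha\b\beta}=\phi_{\alpha\b\beta}-\phi^{v\b v}\phi_{\alpha\b v}\phi_{v\b\beta}$ in the same variables and subtracting, what should remain is precisely $-2A_{\alpha\b v\b v}\o{a^v_{\beta;v}}-A_{\alpha\b v\b v;\b v}\o{a^v_{\beta}}$; these two correction terms reflect, respectively, that $\tfrac{\delta}{\delta\b z^{\beta}}-\tfrac{\p}{\p\b z^{\beta}}=\o{a^v_\beta}\tfrac{\p}{\p\b v}$ and that $A_{\alpha\b v\b v}$ carries antiholomorphic $v$-weight $-2$. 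I expect this bookkeeping to be the only real obstacle: many fourth- and fifth-order derivatives of $\phi$ are in play simultaneously, and one must carefully isolate the genuinely vertical combination assembling into $c(\phi)_{\alpha\b\beta;\b v\b v}$ from the non-tensorial pieces produced by the horizontal lift. Finally, identities (v) and (vi) I would cite directly from \cite{AS,Sch}.
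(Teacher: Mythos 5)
Your proposal is correct and follows essentially the same route as the paper: direct coordinate computation using the Liouville equation $\phi_{v\b v}=e^{\phi}$ (your ``reduction formulas''), the single Christoffel symbol $\phi_v$ of the fibre metric, and the substitution $\phi_{\alpha\b\beta}=c(\phi)_{\alpha\b\beta}+a^v_\alpha\o{a^v_\beta}\phi_{v\b v}$ for (iv), with (v)--(vi) cited from \cite{Sch}. The only caveat is that for (iv) you assert rather than derive that the leftover terms assemble into $-2A_{\alpha\b v\b v}\o{a^v_{\beta;v}}-A_{\alpha\b v\b v;\b v}\o{a^v_\beta}$, but the bookkeeping you describe is exactly the paper's computation and does close up as claimed.
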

\begin{proof}
	(i) By (\ref{1.3}) and $\n_v\phi^{v\b{v}}=0$ one has
	\begin{align*}
		a^v_{\alpha;vv}=(-\phi_{\alpha\b{v}}\phi^{v\b{v}})_{;vv}=(-\phi_{\alpha v\b{v}}\phi^{v\b{v}})_{;v}=(-\p_{\alpha}\log\phi_{v\b{v}})_{;v}=-\phi_{\alpha v},
	\end{align*}
where the last equality follows from (\ref{1.2}).\\
(ii) By (\ref{1.4}),
\begin{align*}
A_{\alpha\b{v}\b{v}}&=A^{v}_{\alpha\b{v}}\phi_{v\b{v}}=\p_{\b{v}}(-\phi^{\b{v}v}\phi_{\alpha\b{v}})\phi_{v\b{v}}\\
&=-(\p_{\b{v}}\phi_{\alpha\b{v}}-\phi_{\alpha\b{v}}\p_{\b{v}}\log\phi_{v\b{v}})\\
&=-\n_{\b{v}}\phi_{\alpha\b{v}}=-\phi_{\alpha\b{v};\b{v}}. 	
\end{align*}
(iii) By (\ref{1.3}) and a direct computation
\begin{align*}
\partial_v A_{\alpha \bar v\bar v}=A_{\alpha\b{v}\b{v};v}&=\p_v(\p_{\b{v}}(-\phi_{\alpha\b{v}}\phi^{\b{v}v})\phi_{v\b{v}})\\
&=\p_v(-\phi_{\alpha\b{v}\b{v}}+\phi_{\alpha\b{v}}\p_{\b{v}}\log \phi_{v\b{v}})\\
&=-\phi_{v\b{v}\alpha\bar v}+\phi_{\alpha v\b{v}}\p_{\b{v}}\log\phi_{v\b{v}}+\phi_{\alpha \b{v}}\p_v\p_{\b{v}}\log\phi_{v\b{v}}\\
&=-(e^{\phi})_{\alpha\b{v}}+(e^{\phi})_\alpha \p_{\b{v}}\phi+\phi_{\alpha\b{v}}\phi_{v\b{v}}\\
&=-(e^{\phi})_{\alpha\b{v}}+e^{\phi}\phi_\alpha\phi_{\b{v}}+\phi_{\alpha\b{v}}e^{\phi}=0.
\end{align*}
(iv) Similar calculations give
\begin{align*}
\frac{\p}{\p \b{z}^{\beta}}A_{\alpha \b{v}\b{v}}&=\frac{\p}{\p \b{z}^{\beta}}(-\p_{\b{v}}\phi_{\alpha\b{v}}+\phi_{\alpha\b{v}}(\p_{\b{v}}\log\phi_{v\b{v}}))	\\
&=-\phi_{\alpha\b{\beta}\b{v};\b{v}}+\phi_{\alpha\b{v}}\p_{\b{\beta}}\p_{\b{v}}\log\phi_{v\b{v}}\\
&=-(c(\phi)_{\alpha\b{\beta}}+a^v_\alpha\o{a^v_\beta}\phi_{v\b{v}})_{;\b{v}\b{v}}+\phi_{\alpha\b{v}}(\phi_{\b{\beta}v}\phi^{v\b{v}})_{;\b{v}\b{v}}\\
&=-c(\phi)_{\alpha\b{\beta};\b{v}\b{v}}-(a^v_\alpha\o{a^v_\beta}\phi_{v\b{v}})_{;\b{v}\b{v}}-\phi_{\alpha\b{v}}a^{\b{v}}_{\b{\beta};\b{v}\b{v}}\\
&=-c(\phi)_{\alpha\b{\beta};\b{v}\b{v}}-2A_{\alpha\b{v}\b{v}}\o{a^v_{\beta;v}}-A_{\alpha\b{v}\b{v};\b{v}}\o{a^v_\beta}.
\end{align*}
For $(v)$ and $(vi)$, one can refer to \cite[Theorem 1, Proposition 3]{Sch}.
\end{proof}

\subsection{Hodge-Laplacian}\label{subsec1.2}
Let 
\begin{align}
\Phi=\phi_{v\b{v}}(dv\otimes d\b{v}+d\b{v}\otimes dv)	
\end{align}
denote the Riemannian metric on $\mc{X}_z$ associated to the fundamental form $\omega|_{\mc{X}_z}=\sqrt{-1}\phi_{v\b{v}}dv\wedge d\b{v}$.
Let $T\mc{X}_z$ denote the holomorphic tangent bundle of $\mc{X}_z$ and $T_{\mb{C}}\mc{X}_z=T\mc{X}_z\oplus\o{T\mc{X}_z}$ denote the complex tangent bundle.
 For any smooth map $u$  from a Riemannian manifold $(M^n, g)$ to $(\mc{X}_z, \Phi)$ and for any $\ell\geq 0$, there is a natural connection on $\wedge^{\ell}T^*M\otimes u^*T_{\mb{C}}\mc{X}_z$ induced from the Levi-Civita connections of $(M^n, g)$ and $(\mc{X}_z, \Phi)$, and we denote by $\n_i$ (or ${}_{;i}$)  the covariant derivatives along the vector $\frac{\p}{\p x^i}$. For example, for the tensor $\Psi=\Psi^{j_1\cdots j_s v^{m_1}\b{v}^{m_2}}_{k_1\cdots k_l v^{n_1}\b{v}^{n_2}}dx^{k_1}\otimes\cdots\otimes dx^{k_l}\otimes \frac{\p}{\p x^{j_1}}\otimes \cdots\otimes\frac{\p}{\p x^{j_s}}\otimes (dv)^{n_1-m_1}\otimes (d\b{v})^{n_2-m_2}$, where $v^{n_1}$ denotes $\underbrace{v\cdots v}_{n_1}$ and  $(dv)^{-1}:=\p/\p v$, one has
 \begin{multline}\label{cov1}
 \n_i \Psi=(\n_i\Psi^{j_1\cdots j_s v^{m_1}\b{v}^{m_2}}_{k_1\cdots k_l v^{n_1}\b{v}^{n_2}})dx^{k_1}\otimes\cdots\otimes dx^{k_l}\otimes \frac{\p}{\p x^{j_1}}\otimes \cdots\otimes\frac{\p}{\p x^{j_s}}\\\otimes (dv)^{n_1-m_1}\otimes (d\b{v})^{n_2-m_2},	 
 \end{multline}
where 
\begin{multline}\label{cov2}
	\n_i\Psi^{j_1\cdots j_s v^{m_1}\b{v}^{m_2}}_{k_1\cdots k_l v^{n_1}\b{v}^{n_2}} =\frac{\p}{\p x^i}\Psi^{j_1\cdots j_s v^{m_1}\b{v}^{m_2}}_{k_1\cdots k_l v^{n_1}\b{v}^{n_2}}+\sum_{t=1}^s\sum_{p=1}^n\Gamma^{j_t}_{ip}\Psi^{j_1\cdots p\cdots j_s v^{m_1}\b{v}^{m_2}}_{k_1\cdots k_l v^{n_1}\b{v}^{n_2}}\\-\sum_{t=1}^l\sum_{p=1}^n \Gamma^p_{ik_t}\Psi^{j_1\cdots j_s v^{m_1}\b{v}^{m_2}}_{k_1\cdots p\cdots k_l v^{n_1}\b{v}^{n_2}}
	+\left((m_1-n_1)\phi_v u^v_i+(m_2-n_2)\phi_{\b{v}}\o{u^v_i}\right)\Psi^{j_1\cdots j_s v^{m_1}\b{v}^{m_2}}_{k_1\cdots k_l v^{n_1}\b{v}^{n_2}}.
\end{multline}
Here
$$\Gamma^{k}_{ij}=\frac{1}{2}g^{kl}\left(\p_jg_{il}+ \p_i g_{jl}- \p_lg_{ij}\right)$$
denote the Christoffel symbols.
We define also 
\begin{aligns}\label{nabla}
\n: \quad  & 
A^{\ell}(M, u^*T_{\mb{C}}\mc{X}_z) \to A^{\ell+1}(M, u^*T_{\mb{C}}\mc{X}_z)\\
& \quad A  \quad\quad\quad\,\mapsto \quad\quad\n A=dx^i\wedge (\n_{i}A).
\end{aligns}
(It is sometimes denoted by $d^\nabla$ to indicate the anti-symmetrization
as one may also define $\nabla$ from
$A^{\ell}(M, u^*T_{\mb{C}}\mc{X}_z)$ to 
$A^{\ell}(M, u^*T_{\mb{C}}\mc{X}_z\otimes T^\ast M)$; see 
Remark \ref{rem2.5}).

Let $(\cdot,\cdot)$ denote the pointwise inner product on $ A^{\ell}(M, u^*T_{\mb{C}}\mc{X}_z) $ induced from  $(M^n,g)$ and $(\mc{X}_z,\Phi)$; for example, for the space $A^1(M,u^*T_{\mb{C}}\mc{X}_z)$, the pointwise inner product is given by
\begin{multline}\left((f_1)_i dx^i\otimes \frac{\partial}{\partial v}+(f_2)_i dx^i\otimes \frac{\partial}{\partial \bar v}, (g_1)_j dx^j\otimes \frac{\partial}{\partial v}+(g_2)_j dx^j\otimes \frac{\partial}{\partial\bar v}\right)\\
=(f_1)_i\overline{(g_1)_j} g^{ij}\phi_{v\bar v}+(f_2)_i\overline{(g_2)_j} g^{ij}\phi_{v\bar v}.
\end{multline}
Define then the corresponding $L^2$-inner product by
\begin{align}\label{inner}
\langle \cdot,\cdot\rangle=\int_M (\cdot,\cdot)d\mu_g.	
\end{align}
  Let $\n^*$ be the adjoint operator of $\n$ with respect to the $L^2$-inner product (\ref{inner}) and define the Hodge-Laplace operator as follows:
  \begin{align}
  \Delta=\n^*\n+\n\n^*,
  \end{align}
see e.g. \cite[(1.38)]{Xin}.  By \cite[Proposition 1.32]{Xin}, $\Delta$ is a self-adjoint and semi-positive elliptic operator. Let
\begin{align}
H=\text{Ker}\Delta=\text{Ker}\n\cap \text{Ker}\n^*
\end{align}
denote the space of harmonic forms.
\begin{lemma}\label{lemma8}
It holds the following identity:
\begin{align}
\text{Id}=\Delta^{-1}\Delta+\mb{H}	
\end{align}
	when acting on the elements of $A^{\ell}(M,u^*T_{\mb{C}}\mc{X}_z)=C^{\infty}(M,\wedge^{\ell} T^*M\otimes u^*T_{\mb{C}}\mc{X}_z)$. Here $\Delta^{-1}: \text{Im}\Delta\to \text{Im}\Delta$ denotes the inverse operator of $\Delta$, and $\mb{H}$ denotes the harmonic projection from $A^{\ell}(M,u^*T_{\mb{C}}\mc{X}_z)$ to $H$. 
\end{lemma}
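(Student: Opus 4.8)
The plan is to deduce the identity directly from standard Hodge--Fredholm theory for the self-adjoint elliptic operator $\Delta$ acting on smooth sections of the Hermitian vector bundle $E:=\wedge^{\ell}T^*M\otimes u^*T_{\mb{C}}\mc{X}_z$ over the compact manifold $M$, equipped with the metric induced from $(M^n,g)$ and $(\mc{X}_z,\Phi)$. First I would record the Bochner-type identity
\begin{align*}
\langle \Delta A,A\rangle=\|\n A\|^2+\|\n^*A\|^2,\qquad A\in A^{\ell}(M,u^*T_{\mb{C}}\mc{X}_z),
\end{align*}
which is immediate from $\Delta=\n^*\n+\n\n^*$ and the definition of the adjoint. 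This already shows that $\Delta$ is semi-positive (consistent with \cite[Proposition 1.32]{Xin}) and that $H=\Ker\Delta=\Ker\n\cap\Ker\n^*$.

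Next I would invoke the elliptic theory: since $\Delta$ is elliptic and formally self-adjoint and $M$ is compact, $H=\Ker\Delta$ is finite-dimensional, $\text{Im}\,\Delta$ is a closed subspace of $A^{\ell}(M,u^*T_{\mb{C}}\mc{X}_z)$, and one has the $L^2$-orthogonal decomposition
\begin{align*}
A^{\ell}(M,u^*T_{\mb{C}}\mc{X}_z)=H\oplus \text{Im}\,\Delta,
\end{align*}
where $\text{Im}\,\Delta=H^{\perp}$ because self-adjointness gives $\overline{\text{Im}\,\Delta}=(\Ker\Delta^*)^{\perp}=(\Ker\Delta)^{\perp}$ and $\text{Im}\,\Delta$ is already closed. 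Consequently the restriction $\Delta|_{\text{Im}\,\Delta}\colon \text{Im}\,\Delta\to \text{Im}\,\Delta$ is a bijection with bounded inverse; this is exactly the operator $\Delta^{-1}$ (the Green operator) in the statement, and $\mb{H}$ is the $L^2$-orthogonal projection onto $H$. Note $\Delta^{-1}\Delta$ is then a well-defined operator on all of $A^{\ell}$, since $\Delta A\in \text{Im}\,\Delta$ automatically.

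To finish, take an arbitrary $A\in A^{\ell}(M,u^*T_{\mb{C}}\mc{X}_z)$ and write $A=\mb{H}A+B$ with $B\in \text{Im}\,\Delta=H^{\perp}$. Then $\Delta A=\Delta B\in \text{Im}\,\Delta$, and applying $\Delta^{-1}$, which inverts $\Delta$ on $\text{Im}\,\Delta$ and hence returns $B$, gives $\Delta^{-1}\Delta A=\Delta^{-1}\Delta B=B=A-\mb{H}A$; rearranging yields $\text{Id}=\Delta^{-1}\Delta+\mb{H}$ on $A^{\ell}(M,u^*T_{\mb{C}}\mc{X}_z)$, as claimed. I do not expect any genuine obstacle here: the lemma is a textbook application of Hodge theory. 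The only points deserving a word of care are that $M$ is assumed compact (so the elliptic estimates hold and $H$ is finite-dimensional) and that $g^{*}\otimes\Phi$ makes $E$ a smooth Hermitian bundle so that $\n^{*}$, and hence $\Delta$, is globally defined; both hold in our setting. One may also remark in passing that $\Delta^{-1}$ commutes with $\Delta$ and with $\mb{H}$, which is automatic from the block-diagonal form of $\Delta$ with respect to $H\oplus\text{Im}\,\Delta$.
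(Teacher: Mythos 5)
Your proof is correct and follows essentially the same route as the paper: both rest on the $L^2$-orthogonal Hodge decomposition $A^{\ell}(M,u^*T_{\mb{C}}\mc{X}_z)=\Ker\Delta\oplus\text{Im}\,\Delta$ (the paper cites Demailly for this, you rederive it from ellipticity and self-adjointness) and then check the identity on each summand, using that $\Delta^{-1}$ inverts the bijection $\Delta|_{\text{Im}\,\Delta}$ and that $\Delta\mb{H}=0$. The extra justification you supply for the closedness of $\text{Im}\,\Delta$ and the identification $\text{Im}\,\Delta=H^{\perp}$ is welcome but does not change the argument.
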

\begin{proof}
  From \cite[Corollary 2.4]{Dem}, considered as a operator on $A^{\ell}(M,u^*T_{\mb{C}}\mc{X}_z)=C^{\infty}(M,\wedge^{\ell} T^*M\otimes u^*T_{\mb{C}}\mc{X}_z)$,
  there is the following orthogonal decomposition:
\begin{align}\label{1.77}
A^{\ell}(M,u^*T_{\mb{C}}\mc{X}_z)=\text{Im}\Delta\oplus \text{Ker}\Delta.	
\end{align}
For any $\alpha\in A^{\ell}(M,u^*T_{\mb{C}}\mc{X}_z)$, by (\ref{1.77})
and $\Delta\mb{H}=0$ it holds
\begin{align*}
	\alpha &=\Delta\alpha_1+\mb{H}(\alpha)=\Delta^{-1}\Delta\Delta\alpha_1+\mb{H}(\alpha)\\
	&=\Delta^{-1}\Delta\left(\Delta\alpha_1+\mb{H}(\alpha)\right)+\mb{H}(\alpha)\\
	&=\Delta^{-1}\Delta(\alpha)+\mb{H}(\alpha),
\end{align*}
which completes the proof. 	
\end{proof}

\begin{lemma}\label{lemma9}
 For any $s\in A^{1}(M,u^*T_{\mb{C}}\mc{X}_z)$, we have
\begin{align}\label{1.11}
	\langle (\Delta^{-1}\Delta-\n\Delta^{-1}\n^*)s,s\rangle\geq 0.
\end{align}
\end{lemma}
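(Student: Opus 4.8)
The plan is to rewrite the operator using Lemma~\ref{lemma8} and then recognize the resulting quadratic form as a squared $L^2$-norm. On $A^1(M,u^*T_{\mb C}\mc X_z)$ we have $\Delta^{-1}\Delta=\mathrm{Id}-\mb H$, and since $\mb H$ is the orthogonal projection onto $H$ (so $\mb H^*=\mb H=\mb H^2$),
\[
\langle(\Delta^{-1}\Delta-\n\Delta^{-1}\n^*)s,s\rangle=\|s\|^2-\|\mb H(s)\|^2-\langle \n\Delta^{-1}\n^* s,s\rangle .
\]
To make sense of the last term I would first note that $\n^* s\in A^0(M,u^*T_{\mb C}\mc X_z)$ lies in $\mathrm{Im}\,\Delta$: any $h\in\Ker\Delta\subset A^0$ satisfies $0=\langle\Delta h,h\rangle=\|\n h\|^2$, hence $\n h=0$, so $\langle\n^* s,h\rangle=\langle s,\n h\rangle=0$. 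Thus $a:=\Delta^{-1}\n^* s\in A^0$ is well defined, and, using that $\Delta=\n^*\n$ on $A^0$,
\[
\langle\n\Delta^{-1}\n^* s,s\rangle=\langle\n a,s\rangle=\langle a,\n^* s\rangle=\langle a,\Delta a\rangle=\langle\n a,\n a\rangle=\|\n a\|^2 .
\]

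Next I would verify that $s=\n a+\mb H(s)+w$, with $w:=s-\n a-\mb H(s)$, is an $L^2$-orthogonal decomposition. Indeed $\langle \n a,\mb H(s)\rangle=\langle a,\n^*\mb H(s)\rangle=0$ because harmonic $1$-forms lie in $\Ker\n^*$; then $\langle\n a,w\rangle=\langle\n a,s\rangle-\|\n a\|^2-0=0$ by the identity above; and $\langle\mb H(s),w\rangle=\langle\mb H(s),s\rangle-\langle\mb H(s),\n a\rangle-\|\mb H(s)\|^2=0$, using $\langle\mb H(s),s\rangle=\|\mb H(s)\|^2$. Hence $\|s\|^2=\|\n a\|^2+\|\mb H(s)\|^2+\|w\|^2$, and substituting back,
\[
\langle(\Delta^{-1}\Delta-\n\Delta^{-1}\n^*)s,s\rangle=\|s\|^2-\|\mb H(s)\|^2-\|\n a\|^2=\|w\|^2\ge 0,
\]
which is the claim.

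The argument is entirely formal once the Hodge-type decomposition of Lemma~\ref{lemma8} is available, and the only point that needs a little care is the bookkeeping of degrees — the inner $\Delta^{-1}$ is the inverse of the Laplacian on $A^0$, while $\Delta^{-1}\Delta$ acts on $A^1$ — together with the verification that $\n^* s$ belongs to the image of the degree-$0$ Laplacian, so that $a$ exists. I would stress that no use of $\n^2=0$ is made anywhere, in accordance with the remark in the paper that $\n^2\not\equiv 0$ in higher dimensions: the non-negativity here is a pure consequence of orthogonality, and equality holds exactly when $s=\n a+\mb H(s)$.
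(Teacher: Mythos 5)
Your proof is correct. Every step checks out: $\n^*s$ is orthogonal to $\Ker\Delta$ in degree $0$ and hence lies in $\text{Im}\,\Delta$ by the decomposition underlying Lemma~\ref{lemma8}, so $a=\Delta^{-1}\n^*s$ exists; the identity $\langle\n\Delta^{-1}\n^*s,s\rangle=\langle a,\Delta a\rangle=\|\n a\|^2$ uses correctly that $\Delta=\n^*\n$ on $A^0$; and the three orthogonality relations you verify give the Pythagorean identity that exhibits the quadratic form as $\|w\|^2$. Nowhere do you need $\n^2=0$, which is exactly as it should be.

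The paper reaches the same conclusion by a more operator-theoretic route: it first proves that $\n\Delta^{-1}\n^*$ is idempotent, then upgrades this (via a density/limit argument on $\o{\text{Im}(\n\Delta^{-1}\n^*)}$) to the statement that $\n\Delta^{-1}\n^*$ \emph{is} the orthogonal projection onto that closed subspace, notes that $\Delta^{-1}\Delta=P_{H^\perp}$ and that the subspace sits inside $H^\perp$, and concludes that the difference is itself an orthogonal projection. Your argument extracts only what is needed for the inequality --- the single identity $\langle Ps,s\rangle=\|Ps\|^2$ for $P=\n\Delta^{-1}\n^*$ applied to the given $s$ --- and therefore avoids both the idempotency computation and the closure argument. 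What the paper's version buys in exchange is the stronger structural statement that $\Delta^{-1}\Delta-\n\Delta^{-1}\n^*$ is a projection operator (onto $\o{\text{Im}(\n\Delta^{-1}\n^*)}^{\perp}\cap H^{\perp}$), which is slightly more information than the non-negativity of the quadratic form, though only the latter is used in the sequel. Both proofs rest on the same two pillars, Lemma~\ref{lemma8} and the adjointness of $\n$ and $\n^*$, so the difference is one of packaging rather than of substance; your version is the leaner of the two.
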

\begin{proof}
For any $s\in A^{1}(M,u^*T_{\mb{C}}\mc{X}_z)$, $\nabla^* s$ is smooth and  orthogonal to $\text{Ker} \Delta$. Hence it is in the image of $\Delta$.
The same is true for $\nabla s$.
Then we have 
\begin{align*}
(\n\Delta^{-1}\n^*)^2s &=	\n\Delta^{-1}(\n^*\n)\Delta^{-1}\n^*s\\
&=\n\Delta^{-1}(\n^*\n+\n\n^*)\Delta^{-1}\n^*s\\
&=\n\Delta^{-1}\n^*s,
\end{align*}
where the second equality holds since $\n^*(\Delta^{-1})\n^*s=0$. This implies that $\n\Delta^{-1}\n^*$ is identity when acting on $\text{Im}(\n\Delta^{-1}\n^*)$.
For any $s'\in \o{\text{Im}(\n\Delta^{-1}\n^*)}$, there exists a sequence $\{s'_n\}\in \text{Im}(\n\Delta^{-1}\n^*)$ such that $s'=\lim_{n\to\infty} s_n'$, then
\begin{aligns}
\langle \n\Delta^{-1}\n^*s, s'\rangle &=\langle \n\Delta^{-1}\n^*s, \lim_{n\to\infty}s'_n\rangle
=\lim_{n\to\infty}\langle \n\Delta^{-1}\n^*s, s'_n\rangle\\
&=\lim_{n\to\infty}\langle s,  \n\Delta^{-1}\n^*s'_n\rangle
=\lim_{n\to\infty}\langle s,  s'_n\rangle\\
&=	\langle s,  \lim_{n\to\infty}s'_n\rangle=\langle s, s'\rangle\\
&=\langle P_{\o{\text{Im}(\n\Delta^{-1}\n^*)}} s, s'\rangle,
\end{aligns}
where $P_{\o{\text{Im}(\n\Delta^{-1}\n^*)}}$ is the orthogonal projection from $A^{1}(M,u^*T_{\mb{C}}\mc{X}_z)$ to  $\o{\text{Im}(\n\Delta^{-1}\n^*)}$.
It follows that $$\n\Delta^{-1}\n^*=P_{\o{\text{Im}(\n\Delta^{-1}\n^*)}}.$$ Note that $\o{\text{Im}(\n\Delta^{-1}\n^*)}\subset H^{\perp}$ and $\Delta^{-1}\Delta=P_{H^{\perp}}$. Thus
\begin{align*}
	\Delta^{-1}\Delta-\n\Delta^{-1}\n^*=P_{\o{\text{Im}(\n\Delta^{-1}\n^*)}^{\perp}\cap H^{\perp}},
\end{align*}
which is the orthogonal projection from $A^{1}(M,u^*T_{\mb{C}}\mc{X}_z)$ to the space $\o{\text{Im}(\n\Delta^{-1}\n^*)}^{\perp}\cap H^{\perp}$.
Therefore, 
\begin{align*}
	\langle (\Delta^{-1}\Delta-\n\Delta^{-1}\n^*)s,s\rangle=\|P_{\o{\text{Im}(\n\Delta^{-1}\n^*)}^{\perp}\cap H^{\perp}}s\|^2\geq 0.
\end{align*}
\end{proof}

 \subsection{Harmonic maps}\label{subsec1.3}
For any smooth map $u: (M^n, g)\to (\mc{X}_z,\Phi)$
the differential
 $du$ is a section of the
bundle $T^*M\otimes u^*T_{\mb{C}}\mc{X}_z$. 
 Let $\{x^i\}$ denote a local coordinate system near a point $p$ in $M$
and $v$ the local complex coordinate on $\mc{X}_z$. Then
$du\in T^*M\otimes u^*T_{\mb{C}}\mc{X}_z$ is locally expressed as 
 \begin{align*}
 du=\frac{\p u^v}{\p x^i}dx^i\otimes \frac{\p}{\p v}+ \o{\frac{\p u^v}{\p x^i}}dx^i\otimes \frac{\p}{\p \b{v}}	
 \end{align*}
The energy density is given by
\begin{align*}
	|du|^2:=(du,du)=2g^{ij}u^v_i\o{u^v_j}\phi_{v\b{v}},
\end{align*}
where for convenience we  denote $u^v_i:=\frac{\p u^v}{\p x^i}$.
 The energy is defined by 
\begin{align}\label{energy}
E(u):=\frac{1}{2}\|du\|^2:=\frac{1}{2}\int_M |du|^2d\mu_g=\int_M (g^{ij}u^v_i\o{u^v_j}\phi_{v\b{v}})d\mu_g,	
\end{align}
where $d\mu_g=\sqrt{\det g}dx^1\wedge \cdots\wedge dx^n$. 
The harmonic equation for $u$ is 
\begin{aligns}\label{harmonic}
g^{ij}\left(\p_i u^v_j-\Gamma^{k}_{ij}u^v_k+(\p_v\log\phi_{v\b{v}})u^v_i u^v_j\right)\frac{\p}{\p v}=0;	
\end{aligns}
see e.g. \cite[Section 4.1]{Yamada1} or \cite[(1.2.10)]{Xin}. 

We recall that the  harmonicity of $u$ 
can be expressed in terms of harmonicity
of the form $du$, which we shall use. Note first
that dual operator $\nabla^\ast$
acts on  $f=f^v_idx^i\otimes \frac{\p}{\p v}\in A^1(M, u^*T\mc{X}_z)$ as
\begin{align}\label{nabla*}
\n^* f= \n^*(f^v_i\frac{\p}{\p v}\otimes dx^i)=-(g^{ij}\n_j f^v_i)\frac{\p}{\p v}.	
\end{align}
In fact for any $e\in A^0(M, u^*T\mc{X}_z)$
\begin{align*}\langle \n^* f, e\rangle=-\int_M g^{ij}\n_j f^v_i \o{e^v}\phi_{v\b{v}}d\mu_g=\int_M g^{ij}f^v_i \n_j\o{e^v}\phi_{v\b{v}}d\mu_g=\langle f,\n e\rangle.\end{align*}
 Thus the harmonic equation (\ref{harmonic}) is equivalent to 
 \begin{align}\label{1.5}
 \n^*\left(u^v_j  dx^j\otimes \frac{\p}{\p v}\right)=-(g^{ij}\n_i u^v_j)\frac{\p}{\p v}=0.	
 \end{align}
On the other hand by a direct calculation 
\begin{aligns}\label{1.6}
\n	\left(u^v_j  dx^j\otimes \frac{\p}{\p v}\right)&=(\n_i u^v_j)dx^i\wedge dx^j\otimes \frac{\p}{\p v}\\
&=(\p_iu^v_j+\phi_v u^v_i u^v_j-\Gamma^k_{ij}u^v_k)dx^i\wedge dx^j\otimes\frac{\p}{\p v}=0. 
\end{aligns}
Combining (\ref{1.5}) with (\ref{1.6}), we obtain
\begin{prop}[{\cite[Proposition 1.3.3]{Xin}}]\label{prop1}
$u$ is a harmonic map if and only if $du$ is harmonic, i.e. $\Delta du=0$.	
\end{prop}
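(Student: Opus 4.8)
The statement is essentially immediate from the two identities \eqref{1.5} and \eqref{1.6} recorded above, so the plan is mainly to organize them correctly. First I would split $du = du^{1,0} + du^{0,1}$, where $du^{1,0} = u^v_j\,dx^j\otimes\frac{\p}{\p v}\in A^1(M,u^*T\mc{X}_z)$ and $du^{0,1} = \o{du^{1,0}}\in A^1(M,u^*\o{T\mc{X}_z})$. Since the connection on $u^*T_{\mb{C}}\mc{X}_z$ is induced from the Levi-Civita connection of the K\"ahler metric $\Phi$ (equivalently, its Chern connection), it preserves the decomposition $T_{\mb{C}}\mc{X}_z = T\mc{X}_z\oplus\o{T\mc{X}_z}$ and commutes with complex conjugation; hence $\n$, $\n^*$ and therefore $\Delta$ are block-diagonal with respect to this splitting, and it suffices to analyze the $(1,0)$-part, the $(0,1)$-part following by complex conjugation.

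Second, I would use that $\Delta = \n^*\n + \n\n^*$ is non-negative: since $M$ is compact, integration by parts gives $\langle\Delta s, s\rangle = \|\n s\|^2 + \|\n^* s\|^2$ for any $s\in A^1(M,u^*T_{\mb{C}}\mc{X}_z)$, so $\Delta s = 0$ if and only if $\n s = 0$ and $\n^* s = 0$ simultaneously. This is precisely the identity $H = \Ker\Delta = \Ker\n\cap\Ker\n^*$ recalled in Subsection \ref{subsec1.2}. Applied to $s = du^{1,0}$, the assertion $\Delta\,du^{1,0}=0$ is equivalent to the conjunction $\n\,du^{1,0} = 0$ and $\n^*\,du^{1,0} = 0$.

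Third and last, I would observe that $\n\,du^{1,0} = 0$ holds unconditionally — this is exactly the computation \eqref{1.6}, in which the coefficient $\p_i u^v_j + \phi_v u^v_i u^v_j - \Gamma^k_{ij}u^v_k$ is symmetric in $i\leftrightarrow j$ (Christoffel symbols are symmetric, and $\phi_v u^v_i u^v_j$ manifestly so), while the factor $dx^i\wedge dx^j$ is skew-symmetric, so the wedge vanishes. Hence the only surviving condition is $\n^*\,du^{1,0} = 0$, which by the formula \eqref{nabla*} equals $-(g^{ij}\n_i u^v_j)\frac{\p}{\p v}$, i.e.\ exactly the harmonic map equation \eqref{harmonic} written in the form \eqref{1.5}; taking complex conjugates supplies the corresponding statements for $du^{0,1}$. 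Putting the three steps together yields $\Delta\,du = 0 \iff \n\,du = \n^*\,du = 0 \iff \n^*\,du = 0 \iff u$ solves \eqref{harmonic}. There is no genuine obstacle; the only point deserving a word of care is the compactness of $M$, which is what legitimizes the integration by parts identifying $\Ker\Delta$ with $\Ker\n\cap\Ker\n^*$.
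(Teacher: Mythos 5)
Your proposal is correct and follows essentially the same route as the paper: the harmonic equation is rewritten as $\n^*\bigl(u^v_j\,dx^j\otimes\frac{\p}{\p v}\bigr)=0$ via \eqref{nabla*}, the identity $\n\bigl(u^v_j\,dx^j\otimes\frac{\p}{\p v}\bigr)=0$ is observed to hold unconditionally by the symmetry of $\n_i u^v_j$ against the skew factor $dx^i\wedge dx^j$, and the two are combined with $\Ker\Delta=\Ker\n\cap\Ker\n^*$. The only difference is that you make explicit the compactness hypothesis and the block-diagonality of $\Delta$ under $T_{\mb{C}}\mc{X}_z=T\mc{X}_z\oplus\o{T\mc{X}_z}$, which the paper leaves implicit.
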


We shall also need the following two theorems; see e.g.
 \cite[Section 4.1]{Yamada1} and references therein. 
\begin{thm}[\cite{ES, Hartman, Albers}]\label{thm1.1} Let $(M^n,g)$ be a closed Riemannian manifold, and $(\Sigma^2,\Phi)$ a surface of non-positive sectional curvature. Suppose there is a continuous map $u_0:(M^n,g)\to (\Sigma^2,\Phi)$. Then there exists a smooth harmonic map homotopic to $u_0$. When the sectional curvature of $\Phi$ is strictly negative and the image of the map is not a point or a closed geodesic, then the harmonic map is unique.
\end{thm}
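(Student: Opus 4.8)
The plan is to prove existence by the harmonic map heat flow of Eells--Sampson and uniqueness by Hartman's convexity argument along geodesic homotopies. Since the statement is applied in this paper only when $\Sigma=\mc{X}_z$ is a compact hyperbolic surface, I would assume throughout that $\Sigma$ is compact, so that the a priori estimates below hold in their simplest form.

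For existence, I would first approximate the continuous $u_0$ by a smooth map in the same homotopy class, and then run the flow
\begin{align*}
\frac{\p u}{\p t}=\tau(u),\qquad u(\cdot,0)=u_0,
\end{align*}
where $\tau(u)$ is the tension field, i.e. the left-hand side of (\ref{harmonic}). Parabolic theory gives a unique smooth solution on a maximal interval $[0,T)$, and one has $\frac{d}{dt}E(u(\cdot,t))=-\|\tau(u(\cdot,t))\|^2\le 0$, so the energy is non-increasing. The crucial estimate is the Bochner-type inequality for the energy density $e(u)=\tfrac12|du|^2$: since $\mathrm{sec}_\Sigma\le 0$ the curvature contribution has a favorable sign, and together with a lower Ricci bound $\mathrm{Ric}_M\ge -C$ of the compact manifold $M$ one obtains
\begin{align*}
\Big(\frac{\p}{\p t}-\Delta_M\Big)e(u)\le C\,e(u).
\end{align*}
I would then combine this with the energy bound, via a parabolic maximum principle (or Moser iteration on parabolic cylinders), to obtain a uniform pointwise bound on $e(u)$, and bootstrap with Schauder estimates to uniform $C^\infty$ bounds on $u(\cdot,t)$ independent of $t$; this rules out finite-time blow-up, so $T=\infty$. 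Finally, since the energy is non-increasing and bounded below, $\int_0^\infty\|\tau(u(\cdot,t))\|^2\,dt<\infty$, so I can choose $t_k\to\infty$ with $\|\tau(u(\cdot,t_k))\|\to 0$, extract a $C^\infty$-convergent subsequence using the uniform bounds, and conclude that the limit $u_\infty$ is harmonic; it is homotopic to $u_0$ because the flow itself is a homotopy.

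For uniqueness, assume $\mathrm{sec}_\Sigma<0$ and let $u_0,u_1\colon M\to\Sigma$ be homotopic harmonic maps. Lifting to the universal cover and using non-positivity of the curvature, I would construct the (unique, smooth) geodesic homotopy $u_s$, $s\in[0,1]$, joining them, so that $V:=\frac{\p u_s}{\p s}$ satisfies $\n_{\p_s}V=0$. A second-variation computation then gives $\frac{d^2}{ds^2}E(u_s)\ge 0$, the integrand being a sum of $|\n V|^2$ and a non-negative curvature term; thus $E(u_s)$ is convex in $s$. Since $u_0$ and $u_1$ are harmonic they are critical points of $E$, so $E(u_s)$ is affine, hence constant, forcing $\n V\equiv 0$ and the vanishing of the curvature term everywhere. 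Strict negativity of $\mathrm{sec}_\Sigma$ then forces $du_s$ and $V$ to be pointwise proportional, and a short argument of Hartman shows that either $V\equiv 0$, so that $u_0=u_1$, or the common image is a point or a closed geodesic.

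The hard part is the existence half: establishing the uniform a priori estimates and long-time existence of the heat flow, i.e. reproving the Eells--Sampson estimates, which is exactly where non-positivity of the target curvature enters in an essential way. The uniqueness half is comparatively soft, relying only on the geodesic convexity of the energy functional for maps into a non-positively curved target together with the strict sign of the curvature.
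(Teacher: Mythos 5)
The paper does not prove this statement: it is imported verbatim as a classical theorem of Eells--Sampson, Hartman and Al'ber, with a pointer to \cite[Section 4.1]{Yamada1} for references. So there is no internal proof to compare against, and the right question is only whether your outline of the classical argument is sound. It is: heat-flow existence via the Bochner inequality plus energy monotonicity, and uniqueness via convexity of the energy along the geodesic homotopy, is exactly the standard route, and your restriction to compact $\Sigma$ is both legitimate for this paper (the fibers $\mc{X}_z$ are closed hyperbolic surfaces) and in fact needed, since for a general incomplete non-positively curved surface the statement as written can fail.

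Two small points of precision. First, in the existence half the ``parabolic maximum principle'' applied to $(\p_t-\Delta)e\le Ce$ only yields $\sup_M e(\cdot,t)\le e^{Ct}\sup_M e(\cdot,0)$, which grows in $t$ and does not rule out blow-up of the $C^1$ norm at infinity; the time-uniform pointwise bound genuinely requires the second option you name, the parabolic mean-value (Moser/Duhamel) inequality on unit cylinders combined with $E(u(\cdot,t))\le E(u_0)$, so you should commit to that route rather than present the two as interchangeable. Second, in the uniqueness half the step from ``$\n V\equiv 0$, curvature term $\equiv 0$, $\mathrm{sec}_\Sigma<0$'' to ``the image is a point or a closed geodesic'' is the one place where real work remains (one must show that a nowhere-vanishing parallel section of $u^*T\Sigma$ to which $du$ is everywhere proportional forces the image onto a single geodesic, closed by compactness of $M$); you correctly attribute this to Hartman, but it should be flagged as a step to be carried out rather than as automatic. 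With those caveats the proposal is a faithful sketch of the cited classical proof.
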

\begin{thm}[ \cite{EL, Koiso}]\label{thm1.2}
Let $(M^n,g)$ be a closed Riemannian manifold, and $(\Sigma^2,\Phi)$ a closed surface with a hyperbolic metric $\Phi$. For a smooth deformation $\Phi_t$ of the hyperbolic metric $\Phi:=\Phi_0$ in the space of smooth metrics on $\Sigma$, the resulting harmonic maps $u_t:(M^n,g)\to (\Sigma^2,\Phi_t)$ are smoothly depending in $t$.
\end{thm}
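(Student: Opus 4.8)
The plan is to prove the smooth dependence by realising the family $u_t$ as the zero set of a smooth map between Banach spaces and invoking the implicit function theorem. First I would fix $k\geq 2$ and $\alpha\in(0,1)$ and use the exponential map of $(\Sigma,\Phi_0)$ to identify a neighbourhood of $u_0$ in $C^{k,\alpha}(M,\Sigma)$ with a neighbourhood $U$ of the origin in the Banach space $C^{k,\alpha}(M,u_0^*T\Sigma)$; composing with parallel transport along the relevant geodesics, the tension field then defines a map
\[
\mc{F}\colon(-\epsilon,\epsilon)\times U\longrightarrow C^{k-2,\alpha}(M,u_0^*T\Sigma),\qquad \mc{F}(t,u)=\tau_{g,\Phi_t}(u),
\]
where $\tau_{g,\Phi_t}$ is the tension field of (\ref{harmonic}) computed with the target metric $\Phi_t$ in place of $\Phi$. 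Since the coefficients of $\tau_{g,\Phi_t}$ depend on $\Phi_t$ --- through $\p_v\log(\Phi_t)_{v\b v}$ --- and on its first derivatives, and $\Phi_t$ depends smoothly on $t$, the map $\mc{F}$ is of class $C^\infty$, and by hypothesis $\mc{F}(0,0)=0$ corresponds to the harmonic map $u_0$.

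Next I would compute the partial Fr\'echet derivative $D_u\mc{F}(0,0)$, which up to sign is the Jacobi operator $J_{u_0}$ of the energy $E=E_{\Phi_0}$ at the harmonic map $u_0$: a formally self-adjoint, second-order elliptic operator satisfying
\[
\langle J_{u_0}V,V\rangle=\int_M\Big(|\n V|^2-\sum_i\langle R^{\Sigma}\big(V,du_0(e_i)\big)du_0(e_i),V\rangle\Big)\,d\mu_g,
\]
for a local $g$-orthonormal frame $\{e_i\}$ on $M$. Since $\Phi_0$ has constant curvature $-1$ the curvature term has a favourable sign, so $\langle J_{u_0}V,V\rangle\geq\int_M|\n V|^2\,d\mu_g\geq 0$, and the kernel of $J_{u_0}$ consists of parallel sections of $u_0^*T\Sigma$ that are pointwise tangent to the image of $du_0$; this kernel is trivial precisely when $u_0$ is non-constant and its image is not a closed geodesic --- the non-degenerate case of Theorem \ref{thm1.1}. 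In that case $J_{u_0}$ is injective, hence --- being elliptic and self-adjoint with trivial kernel and cokernel --- an isomorphism from $C^{k,\alpha}$ onto $C^{k-2,\alpha}$.

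The implicit function theorem then yields $\epsilon'>0$ and a $C^\infty$ curve $t\mapsto\tilde u_t$, $|t|<\epsilon'$, with $\tilde u_0=u_0$ and $\mc{F}(t,\tilde u_t)=0$, so that each $\tilde u_t$ is a harmonic map $(M,g)\to(\Sigma,\Phi_t)$ homotopic to $u_0$; since $\Phi_t$ has strictly negative curvature for $|t|$ small, Hartman's uniqueness (Theorem \ref{thm1.1}) forces $\tilde u_t=u_t$, giving smooth dependence in every $C^{k,\alpha}$, and elliptic bootstrapping of the harmonic map equation then upgrades this to smooth joint dependence of $u_t(x)$ on $(t,x)$. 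The degenerate cases I would treat directly: if the image of $u_0$ is a point then all $u_t$ are the same constant map, and if it is a closed geodesic one applies the same argument after dividing out the one-parameter isometry group of reparametrisations, or just notes that the resulting one-parameter family of harmonic maps varies smoothly. The hard part will be the invertibility step --- pinning down exactly when the second variation of the energy is positive definite and handling the constant and closed-geodesic cases --- together with setting up the manifold-of-maps and H\"older-space framework carefully enough that $\mc{F}$ is genuinely $C^\infty$ and its linearisation is the Jacobi operator.
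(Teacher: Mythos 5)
The paper does not prove this statement itself but cites \cite{EL, Koiso}, and your argument --- linearizing the tension field, identifying the Jacobi operator, using constant curvature $-1$ to show its kernel is trivial exactly outside the point/closed-geodesic cases, and then applying the implicit function theorem plus Hartman's uniqueness --- is precisely the standard proof given in those references. Your proposal is correct and takes essentially the same route.
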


\section{Variations of energy on Teichm\"uller space}\label{sec2}

In this section we will compute the first and the second variations of 
the energy $E(u(z))$ for harmonic maps $u:M^n\to \mc{X}_{z}$.
 Fixed a smooth map $u_0:M^n\to \mc{X}_{z_0}$, $z_0\in\mc{T}$. From Theorem \ref{thm1.1}, \ref{thm1.2} and \cite[Section 1.1]{Yamada}, the following 
 function
\begin{align}
E(z):=E(u(z))	
\end{align}
is well-defined and smooth 
on Teichm\"uller space $\mc{T}$, where $u(z)$ is a harmonic map from $(M^n, g)\to (\mc{X}_z,\Phi)$ and homotopic to $u_0$. 
In order to find the variations $\frac{\p }{\p z^{\alpha}}E(z)$ and $\frac{\p^2}{\p z^{\alpha}\p\b{z}^{\beta}}E(z)$ it is enough to compute 
$$(\p E(z))(\xi)=\frac{\p E(z)}{\p  z^{\alpha}}\xi^{\alpha},\quad \p\b{\p}E(z)(\xi,\o{\xi})=\frac{\p^2E(z)}{\p z^{\alpha}\p\b{z}^{\beta}}\xi^{\alpha}\b{\xi}^{\beta}$$
along a single direction $\xi=\xi^{\alpha}\frac{\p}{\p z^{\alpha}}\in T\mc{T}$. 
So with some abuse of notation
we assume that the base manifold $\mc{T}$ is one dimensional
with  $z$ as local holomorphic coordinate, and
the indices $\alpha$ and  $\b \beta$ above will be replaced by
$z$ and $\b{z}$.

\subsection{The first variation}\label{subsec2.1}

Recall  the notation (\ref{Aalpha}) and define
\begin{align}\label{defA}
  A:=A_z=
  A_{z\b{v}\b{v}}\o{u^v_i}\phi^{v\b{v}}dx^i\otimes \frac{\p}{\p v}
  =  A_{z\b{v}
  }^v\o{u^v_i}
  dx^i\otimes \frac{\p}{\p v}
  \in A^1(M, u^*T\mc{X}_z). 
\end{align}
It will play an important role in the variation
formulas below. Note that $A$ is the pull-back of 
the Kodaira-Spencer tensor
(\ref{A-pre}).

\begin{thm}\label{thm2.1}
The first variation of the energy function $E(z)$ is given by 	
\begin{align}
\frac{\p}{\p z}E(z)=\langle A, du\rangle.
\end{align}
\end{thm}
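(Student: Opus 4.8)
The plan is to differentiate the energy integral $E(z) = \int_M g^{ij} u^v_i \overline{u^v_j}\phi_{v\bar v}\, d\mu_g$ directly in $z$, using that the metric $g$ on $M$ is fixed (independent of $z$), while the map $u = u(z,x)$ and the fiber metric $\phi_{v\bar v}$ both depend on $z$. Since $\frac{\partial}{\partial z}$ of a holomorphically varying family of harmonic maps makes sense once we work on the total space $\mathcal X$ and pull back, the first step is to write $\frac{\partial}{\partial z} E(z)$ as a sum of three terms: one where $\partial_z$ hits $u^v_i$, one where it hits $\overline{u^v_j}$, and one where it hits the fiber metric coefficient $\phi_{v\bar v}$ (the volume form $d\mu_g$ is untouched). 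The term hitting $\overline{u^v_j}$ vanishes: the derivative $\partial_z \overline{u^v} $ is (anti)holomorphic-type data and when paired against $u^v_i$ via the harmonic equation (\ref{harmonic}), or equivalently against the closedness/co-closedness $\nabla^*(u^v_j dx^j \otimes \partial_v)=0$ from (\ref{1.5}), it integrates to zero. More precisely, $\partial_{\bar z}$ of $du$ against $du$ will be handled by an integration by parts moving $\nabla^*$ onto the harmonic $du$.

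The key computational input is the way the horizontal lift interacts with $\partial_z$: along the total space $\mathcal X$, the honest $z$-derivative of the fiber coordinate expression of $u$ differs from the "naive" one by the connection term $a^v_z = -\phi_{z\bar v}\phi^{v\bar v}$ of (\ref{1.3}), and its $\bar v$-derivative is exactly $A^v_{z\bar v} = \partial_{\bar v} a^v_z$ of (\ref{1.4}). So the second step is to show that, after using the harmonic equation to discard the genuinely $\partial_z u$ pieces (which contribute a term of the form $\langle \nabla(\text{something}), du\rangle = \langle \text{something}, \nabla^* du\rangle = 0$), the surviving contribution is precisely $\int_M A_{z\bar v\bar v}\, \overline{u^v_i}\, \phi^{v\bar v}\, g^{ij} u^v_j\, \phi_{v\bar v}\, d\mu_g$, which by the definition (\ref{defA}) of $A$ and the pointwise inner product on $A^1(M,u^*T\mathcal X_z)$ is exactly $\langle A, du\rangle$. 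Here one uses Lemma \ref{lemma1}(ii), $a^v_{z;\bar v} = A_{z\bar v\bar v}\phi^{v\bar v}$, to identify the metric-derivative term $\partial_z \phi_{v\bar v}$ with $A$.

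The main obstacle is bookkeeping the covariant versus ordinary derivatives cleanly: one must check that the Christoffel-symbol and $\phi_v u^v_i$ correction terms appearing in the covariant derivative (\ref{cov2}) assemble into the harmonic-map operator so that the unwanted terms really are of the form $\langle \xi, \nabla^* du\rangle$ and hence vanish by Proposition \ref{prop1}, rather than leaving a residual boundary-type term (there is no boundary since $M$ is closed, but the algebraic cancellation still has to be verified). A secondary subtlety is making precise what "$\partial_z$ of the map $u$" means — this is where one invokes Theorem \ref{thm1.2} for smooth dependence and works with the horizontal-vertical splitting $T\mathcal X = \mathcal H \oplus \mathcal V$ so that $\frac{\delta}{\delta z} = \frac{\partial}{\partial z} + a^v_z\frac{\partial}{\partial v}$ is the natural differentiation; the tensor $A$ is then literally the pullback under $u$ of the Kodaira–Spencer representative $\bar\partial^V \frac{\delta}{\delta z}$ of (\ref{A-pre}), paired with $du$. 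Once the vanishing of the harmonic pieces is established, the formula $\frac{\partial}{\partial z}E(z) = \langle A, du\rangle$ follows by collecting terms.
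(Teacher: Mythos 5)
Your proposal is correct and follows essentially the same route as the paper: differentiate the energy integrand, absorb the chain-rule terms coming from $\phi_{v\b{v}}(z,u(z,x))$ into covariant derivatives, integrate by parts twice against the harmonic equation $g^{ij}\n_i u^v_j=0$, and use Lemma \ref{lemma1}(ii) to identify the surviving term with $A$. The one slip is in the surviving integrand you display: it should be $\int_M A_{z\b{v}\b{v}}\,\o{u^v_i}\,\o{u^v_j}\,g^{ij}\,d\mu_g$ with \emph{both} factors conjugated (this is what the Hermitian pairing $\langle A,du\rangle$ actually produces), not $\o{u^v_i}\,u^v_j$.
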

\begin{proof}
  We perform
  the differentiation $  \frac{\p}{\p z}$
  on the definition of the energy (\ref{energy}),
\begin{align}\label{1.12}
  \frac{\p}{\p z}
  E(z)=\int_M \left(g^{ij}(\p_z u^v_i)\o{u^v_j}\phi_{v\b{v}}+g^{ij}u^v_i\o{\p_{\b{z}}u^v_j}\phi_{v\b{v}}+g^{ij}u^v_i \o{u^v_j}\p_z\phi_{v\b{v}}\right)d\mu_g.
\end{align}
The family of harmonic maps $u(z)$ will be treated as a map  
\begin{align}\label{family harmonic maps}
U: \mc{T}\times M\to \mc{X},\quad U(z,x)=(z, v=u(z,x)).	
\end{align}
The pull-back of $\phi$ is  $\phi=\phi(z, u(z,x))$, so  that
\begin{align}\label{1.13}
	\p_z\phi_{v\b{v}}=\phi_{v\b{v}z}+\phi_{v\b{v}v}u^v_z+\phi_{v\b{v}\b{v}}\o{u^v_{\b{z}}}.
\end{align}
Substituting (\ref{1.13}) into (\ref{1.12}), and
 using $\nabla_i u_z^v= \p_iu_{z}^v+\phi_v u_z^v u_i^v$,
we obtain
\begin{align*}
	\frac{\p}{\p z}E(z)&=\int_M \left(g^{ij}(\n_i u^v_z)\o{u^v_j}\phi_{v\b{v}}+g^{ij}u^v_i\o{\n_{j}u^v_{\b{z}}}\phi_{v\b{v}}+g^{ij}u^v_i \o{u^v_j}\phi_{v\b{v}z}\right)d\mu_g\\
	&=\int_M \left(-g^{ij} u^v_z\n_i\o{u^v_j}\phi_{v\b{v}}-g^{ij}\n_{j}u^v_i\o{u^v_{\b{z}}}\phi_{v\b{v}}+g^{ij}u^v_i \o{u^v_j}\phi_{v\b{v}z}\right)d\mu_g\\
                           &=\int_M g^{ij}u^v_i
                             \o{u^v_j}\phi_{v\b{v}z}
                             d\mu_g,
\end{align*}
where the last equality follows from the harmonic equation (\ref{1.5}).
The factor $u^v_i\phi_{v\b{v}z}$ can be expressed
in term of $
A_{z\b{v}\b{v}}\o{u^v_i}$,
by Lemma \ref{lemma1} (ii), as follows, 
\begin{align*}
  \n_i \phi_{z\b{v}}=\phi_{v\b{v}z}u^v_i+\phi_{z\b{v};\b{v}}\o{u^v_i}=\phi_{v\b{v}z}u^v_i-
  A_{z\b{v}\b{v}}\o{u^v_i}.	
\end{align*}
Thus, again by the harmonic equation (\ref{1.5}), we obtain
\begin{aligns}\label{F1}
	\frac{\p}{\p z}E(z)&=\int_M g^{ij}u^v_i \o{u^v_j}\phi_{v\b{v}z}d\mu_g\\
	&=\int_M g^{ij} \o{u^v_j}\n_i\phi_{z\b{v}}d\mu_g+\int_M A_{z\b{v}\b{v}}\o{u^v_i}\o{u^v_j}g^{ij}d\mu_g\\
	&=-\int_M g^{ij} \n_i\o{u^v_j}\phi_{z\b{v}}d\mu_g+\int_M A_{z\b{v}\b{v}}\o{u^v_i}\o{u^v_j}g^{ij}d\mu_g\\
	&=\int_M A_{z\b{v}\b{v}}\o{u^v_i}\o{u^v_j}g^{ij}d\mu_g.
\end{aligns}
On the other hand
\begin{aligns}\label{F2}
	\langle A, du\rangle &=\left\langle A_{z\b{v}\b{v}}\o{u^v_i}\phi^{v\b{v}}dx^i\otimes \frac{\p}{\p v}, u^v_i dx^i\otimes \frac{\p}{\p v}+\o{u^v_i}dx^i\otimes \frac{\p}{\p\b{v}}\right\rangle\\
	&=\int_M A_{z\b{v}\b{v}}\o{u^v_i}\o{u^v_j}g^{ij}d\mu_g,
\end{aligns}
which is $	\frac{\p}{\p z}E(z)$, completing the proof.

\end{proof}

\subsection{The second variation}\label{subsec2.2}
 We shall use the method in \cite{AS} where the case
$M$ being the unit circle, namely $u$ being a closed geodesic,
is considered.
\begin{lemma}\label{lemma3}
 We have
\begin{multline*}\frac{\p}{\p\b{z}}\left(A_{z\b{v}\b{v}}(z,u(z,x))\o{u^v_i}\o{u^v_j}g^{ij}\right)=(-c(\phi)_{z\b{z};\b{v}\b{v}}-2A_{z\b{v}\b{v}}\o{a^v_{z;v}}-A_{z\b{v}\b{v};\b{v}}\o{a^v_z})\o{u^v_i}\o{u^v_j}g^{ij}\\
	+\p_{\b{v}}A_{z\b{v}\b{v}}\o{u^v_z}\o{u^v_i}\o{u^v_j}g^{ij}+2A_{z\b{v}\b{v}}\o{\p_z u^v_i}\o{u^v_j}g^{ij}.\end{multline*}
\end{lemma}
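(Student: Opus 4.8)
The proof is a direct chain-rule computation, so I will only indicate how the terms should be organized. The key point is that $A_{z\b{v}\b{v}}$ is a smooth function of the total-space variables $(z,v,\b{v})$ which, along the family of harmonic maps, is restricted to $v=u(z,x)$ and $\b{v}=\o{u(z,x)}$; hence its $\b{z}$-derivative splits into an explicit part (the partial $\b z$-derivative on $\mc{X}$) together with contributions coming from $\p_{\b z}u^v=u^v_{\b z}$ and $\p_{\b z}\o{u^v}=\o{u^v_z}$.

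First I would apply the chain rule to obtain
\[
\frac{\p}{\p\b{z}}\Big(A_{z\b{v}\b{v}}(z,u(z,x))\Big)=\frac{\p A_{z\b{v}\b{v}}}{\p\b{z}}+\big(\p_v A_{z\b{v}\b{v}}\big)\,u^v_{\b{z}}+\big(\p_{\b{v}}A_{z\b{v}\b{v}}\big)\,\o{u^v_z}.
\]
By Lemma \ref{lemma1}(iii) the middle term vanishes identically, since $\p_v A_{z\b{v}\b{v}}=A_{z\b{v}\b{v};v}=0$; this is exactly where the holomorphicity in $v$ of the harmonic Kodaira--Spencer representative is used. For the first term I substitute Lemma \ref{lemma1}(iv),
\[
\frac{\p}{\p\b{z}}A_{z\b{v}\b{v}}=-c(\phi)_{z\b{z};\b{v}\b{v}}-2A_{z\b{v}\b{v}}\,\o{a^v_{z;v}}-A_{z\b{v}\b{v};\b{v}}\,\o{a^v_z},
\]
so that $\p_{\b z}\big(A_{z\b v\b v}(z,u(z,x))\big)$ equals the sum of these three terms plus $\p_{\b v}A_{z\b v\b v}\,\o{u^v_z}$.

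It then remains to differentiate the remaining factor $\o{u^v_i}\,\o{u^v_j}\,g^{ij}$. Since $g^{ij}$ is the inverse metric of the fixed domain $(M^n,g)$ it is independent of $z$, and $\p_{\b z}\o{u^v_i}=\o{\p_z u^v_i}$; using the symmetry $g^{ij}=g^{ji}$ the two resulting terms coincide, so this factor contributes $2A_{z\b v\b v}\,\o{\p_z u^v_i}\,\o{u^v_j}\,g^{ij}$. Assembling all pieces through the Leibniz rule yields precisely the asserted identity.

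There is no genuine obstacle here; the only thing requiring care is the bookkeeping of the chain rule, namely remembering that $A_{z\b v\b v}$ depends on $\b z$ both explicitly and through $v=u(z,x)$, so that the vanishing $\p_v A_{z\b v\b v}=0$ from Lemma \ref{lemma1}(iii) is exactly what removes the would-be $u^v_{\b z}$ term, and noting that differentiating $\o{u^v_i}$ in $\b z$ produces $\o{u^v_z}$ (the conjugate of a $z$-derivative) rather than $u^v_z$.
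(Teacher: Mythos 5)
Your proposal is correct and follows essentially the same route as the paper: the chain rule through $v=u(z,x)$, the vanishing of the $u^v_{\b z}$ term via Lemma \ref{lemma1} (iii), the substitution of Lemma \ref{lemma1} (iv) for the explicit $\b z$-derivative, and the Leibniz rule on $\o{u^v_i}\o{u^v_j}g^{ij}$. Nothing further is needed.
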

\begin{proof}
From (\ref{family harmonic maps}), we have  
\begin{align*}\frac{\p}{\p\b{z}}A_{z\b{v}\b{v}}(z,u(z,x))=(\p_{\b{z}}A_{z\b{v}\b{v}})(z,u)+\p_{\b{v}}A_{z\b{v}\b{v}}\o{u^v_z}+\p_vA_{z\b{v}\b{v}}u^v_{\b{z}}.\end{align*}
This combined with Lemma \ref{lemma1} (iii)-(iv) gives
\begin{align*}
	\frac{\p}{\p\b{z}}\left(A_{z\b{v}\b{v}}(z,u(z,x))\o{u^v_i}\o{u^v_j}g^{ij}\right) =&(-c(\phi)_{z\b{z};\b{v}\b{v}}-2A_{z\b{v}\b{v}}\o{a^v_{z;v}}-A_{z\b{v}\b{v};\b{v}}\o{a^v_z})\o{u^v_i}\o{u^v_j}g^{ij}\\
	&+\p_{\b{v}}A_{z\b{v}\b{v}}\o{u^v_z}\o{u^v_i}\o{u^v_j}g^{ij}+2A_{z\b{v}\b{v}}\o{\p_z u^v_i}\o{u^v_j}g^{ij}.
\end{align*}
\end{proof}
We recall the definition of divergence of $\alpha$
for any one form $\alpha=\alpha_i dx^i\in A^1(M)$, 
\begin{align}\label{div}
\text{div}(\alpha)=g^{ij}\n_i \alpha_j,
\end{align}
and Stokes' theorem that
\begin{align}
	\int_M \text{div}(\alpha)d\mu_g=0.
\end{align}
Let 
\begin{align}\label{W-0}
W=P_{\mathcal V}
U_*(\frac{\partial}
{\partial z}
) 
=u_z^v 
\frac{\partial}
{\partial v}
+\phi_{\bar v z} \phi^{v\bar v}
\frac{\partial}{\partial v}=(u_z^v- a_z^v)\frac{\partial}{\partial v} 
\end{align}
be the vertical  projection  of push-forward 
$U_*(\frac{\partial}
{\partial z}
)$, 
and
\begin{aligns}\label{alpha}
\alpha =(A, W)
	=A_{z\b{v}\b{v}}\o{u^v_i}(\o{u^v_z}-\o{a^v_z})dx^i.	
\end{aligns} 
\begin{lemma}\label{lemma4}
If $\alpha$ is given by (\ref{alpha}), then 	
\begin{multline*}
\text{div}(\alpha) =g^{ij}A_{z\b{v}\b{v};\b{v}}\o{u^v_i}\o{u^v_j}(\o{u^v_z}-\o{a^v_z})
\\+g^{ij}A_{z\b{v}\b{v}}\o{u^v_j}(\o{\p_i u^v_z+(\p_v\log\phi_{v\b{v}})u^v_z u^v_i-A^{v}_{z\b{v}}\o{u^v_i}-a^{v}_{z;v}u^v_i}).
\end{multline*}
\end{lemma}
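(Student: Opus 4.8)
The plan is to regard $\alpha$ as the pointwise Hermitian pairing $\alpha=(A,W)$, where $A\in A^1(M,u^*T\mc X_z)$ is the tensor (\ref{defA}) and $W=(u^v_z-a^v_z)\tfrac{\p}{\p v}\in A^0(M,u^*T\mc X_z)$ is the vertical field (\ref{W-0}), and then to expand $\text{div}(\alpha)=g^{ij}\n_i\alpha_j$ by the Leibniz rule. Since the connection $\n$ on $T^*M\otimes u^*T\mc X_z$ is induced by the Levi--Civita connections of $(M,g)$ and $(\mc X_z,\Phi)$ it is compatible with $(\cdot,\cdot)$, and since it already incorporates the Christoffel symbols $\Gamma^k_{ij}$ acting on the $dx^j$-factor, one gets $\n_i\alpha_j=((\n A)_{ij},W)+(A_j,\n_i W)$, where $(\n A)_{ij}$ is the $(i,j)$-component of the full covariant derivative of $A$ and $\n_i W$ the bundle-covariant derivative of the section $W$. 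Tracing with $g^{ij}$ thus reduces the lemma to computing the two tensors $\n_i W$ and $(\n A)_{ij}$.

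First I would compute $\n_i W$, which is elementary. The only nonvanishing Christoffel symbol of the fibre metric is $\Gamma^v_{vv}=\p_v\log\phi_{v\b v}$, so $\n_i\tfrac{\p}{\p v}=(\p_v\log\phi_{v\b v})u^v_i\tfrac{\p}{\p v}$; combining this with the chain rule $\p_i(a^v_z\circ u)=(\p_v a^v_z)u^v_i+(\p_{\b v}a^v_z)\o{u^v_i}$, the identity $\p_{\b v}a^v_z=A^v_{z\b v}$ from (\ref{1.4}), and $a^v_{z;v}=\p_v a^v_z+(\p_v\log\phi_{v\b v})a^v_z$, one obtains
\begin{align*}
\n_i W=\big(\p_i u^v_z+(\p_v\log\phi_{v\b v})u^v_z u^v_i-A^v_{z\b v}\o{u^v_i}-a^v_{z;v}u^v_i\big)\tfrac{\p}{\p v}.
\end{align*}
Pairing $A_j=A^v_{z\b v}\o{u^v_j}\tfrac{\p}{\p v}$ with this in the Hermitian metric and using $A^v_{z\b v}\phi_{v\b v}=A_{z\b v\b v}$ produces exactly the second term of the claimed formula.

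Next I would compute $(\n A)_{ij}$. The clean way is to notice that $A$ is the composition of the $(0,1)$-projection of $du$ with the (pulled-back) Kodaira--Spencer endomorphism $\mathbf{A}'=A^v_{z\b v}\tfrac{\p}{\p v}\otimes d\b v$ of (\ref{A-pre}), i.e. $A_j=\mathbf{A}'\big(\o{u^v_j}\tfrac{\p}{\p\b v}\big)$; since the $(0,1)$-projection is parallel on the Kähler fibre, the $dx^j$-Christoffel contributions cancel and $(\n A)_{ij}=\big(\n_{u_*\p_i}\mathbf{A}'\big)\big(\o{u^v_j}\tfrac{\p}{\p\b v}\big)+\mathbf{A}'\big(\o{\n_i u^v_j}\tfrac{\p}{\p\b v}\big)$, with $\n_i u^v_j=\p_i u^v_j-\Gamma^k_{ij}u^v_k+(\p_v\log\phi_{v\b v})u^v_i u^v_j$ the Hessian of $u$. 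By Lemma \ref{lemma1}(iii) ($A_{z\b v\b v;v}=0$) the $(1,0)$-part of $\n\mathbf{A}'$ vanishes, while raising the index with the parallel fibre metric gives the $(0,1)$-part $\n_{\b v}\mathbf{A}'=A_{z\b v\b v;\b v}\phi^{v\b v}\tfrac{\p}{\p v}\otimes d\b v$; hence $\big(\n_{u_*\p_i}\mathbf{A}'\big)\big(\o{u^v_j}\tfrac{\p}{\p\b v}\big)=A_{z\b v\b v;\b v}\phi^{v\b v}\o{u^v_i}\,\o{u^v_j}\tfrac{\p}{\p v}$. Pairing with $W$ and tracing with $g^{ij}$, the Hessian term contributes $A_{z\b v\b v}\o{W^v}\,\overline{g^{ij}\n_i u^v_j}$, which vanishes because $u$ is harmonic, by (\ref{1.5}); what survives, $g^{ij}A_{z\b v\b v;\b v}\o{u^v_i}\o{u^v_j}\o{W^v}$, is the first term of the claimed formula. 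Adding the two traces completes the proof.

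The step requiring the most care is $(\n A)_{ij}$: one must keep track of the two target Christoffel symbols $\p_v\log\phi_{v\b v}$ and $\p_{\b v}\log\phi_{v\b v}$, use that $A_{z\b v\b v}(z,u(z,x))$ is a pulled-back quantity whose $x$-derivatives are controlled by $\p_v A_{z\b v\b v}=0$ and $\p_{\b v}A_{z\b v\b v}$ (Lemma \ref{lemma1}(iii)), and verify that the $M$-Christoffel contributions coming from the $\o{u^v_j}\,dx^j$-factor cancel those appearing in $\n_i\alpha_j$. It is equally essential that $u$ be harmonic, so that the Hessian term drops out after the $g^{ij}$-trace; without harmonicity the identity would carry the additional summand $g^{ij}A_{z\b v\b v}\,\overline{\n_i u^v_j}\,\o{W^v}$.
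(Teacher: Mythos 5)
Your proof is correct and follows essentially the same route as the paper: expand $\mathrm{div}(\alpha)=g^{ij}\n_i(A_{z\b v\b v}\o{u^v_j}(\o{u^v_z}-\o{a^v_z}))$ by the Leibniz rule, kill the Hessian term $g^{ij}\n_i\o{u^v_j}$ by harmonicity, use $A_{z\b v\b v;v}=0$ (Lemma \ref{lemma1}(iii)) so that $\n_iA_{z\b v\b v}=A_{z\b v\b v;\b v}\o{u^v_i}$, and compute $\n_i(u^v_z-a^v_z)$ exactly as you do. The only difference is presentational (you package the chain rule via the Kodaira--Spencer endomorphism, the paper differentiates the coordinate expression directly), and your displayed formula for $\n_iW$ agrees with the paper's.
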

\begin{proof}
By the definition of $\text{div}(\alpha)$	in (\ref{div}), we have 
\begin{align*}
\text{div}(\alpha) &=g^{ij}\n_i(A_{z\b{v}\b{v}}\o{u^v_j}(\o{u^v_z}-\o{a^v_z}))\\
&=g^{ij}\left((\n_i A_{z\b{v}\b{v}})\o{u^v_j}(\o{u^v_z}-\o{a^v_z})+A_{z\b{v}\b{v}}\o{u^v_j}\n_i(\o{u^v_z}-\o{a^v_z})+A_{z\b{v}\b{v}}(\o{u^v_z}-\o{a^v_z})\n_i\o{u^v_j}\right)\\
&=g^{ij}\left((\n_i A_{z\b{v}\b{v}})\o{u^v_j}(\o{u^v_z}-\o{a^v_z})+A_{z\b{v}\b{v}}\o{u^v_j}\n_i(\o{u^v_z}-\o{a^v_z})\right),
\end{align*}
where the last equality follows from harmonic equation (\ref{1.5}). Using Lemma \ref{lemma1} (ii), we find
\begin{align*}\n_iA_{z\b{v}\b{v}}=A_{z\b{v}\b{v};v}u^v_i+A_{z\b{v}\b{v};\b{v}}\o{u^v_i}=A_{z\b{v}\b{v};\b{v}}\o{u^v_i}\end{align*}
and 
\begin{align*}
\n_i(u^v_z-a^v_z)=\p_i u^v_z+(\p_v \log\phi_{v\b{v}})u^v_z u^v_i-A^{v}_{z\b{v}}\o{u^v_i}-a^{v}_{z;v}u^v_i,	
\end{align*}
so
\begin{multline*}
\text{div}(\alpha) =g^{ij}A_{z\b{v}\b{v};\b{v}}\o{u^v_i}\o{u^v_j}(\o{u^v_z}-\o{a^v_z})
\\+g^{ij}A_{z\b{v}\b{v}}\o{u^v_j}(\o{\p_i u^v_z+(\p_v\log\phi_{v\b{v}})u^v_z u^v_i-A^{v}_{z\b{v}}\o{u^v_i}-a^{v}_{z;v}u^v_i}).
\end{multline*}
\end{proof}

\begin{lemma}
  The second variation
  $\frac{\p^2}{\p z\p\b{z}}E(z)$   is
\begin{aligns}\label{2.3}
 &\quad \frac{\p^2}{\p z\p\b{z}}E(z)\\
  &=\frac{\p}{\p\b{z}}\int_M A_{z\b{v}\b{v}}\o{u^v_i}\o{u^v_j}g^{ij}d\mu_g\\
&=\int_M \left(\frac{\p}{\p\b{z}}(A_{z\b{v}\b{v}}\o{u^v_i}\o{u^v_j}g^{ij})	-2\text{div}(\alpha)+\text{div}(c(\phi)_{z\b{z};\b{v}}\o{u^v_j}dx^j)\right)d\mu_g\\
&=\int_M \left(c(\phi)_{z\b{z}}g^{ij}\phi_{v\b{v}}u^v_i\o{u^v_j}+g^{ij}A_{z\b{v}\b{v}}A^{\b{v}}_{\b{z}v}u^v_i\o{u^v_j}-g^{ij}\n_i A_{z\b{v}\b{v}}\o{u^v_j}(\o{u^v_z}-\o{a^v_z})\right)d\mu_g. 
\end{aligns}
\end{lemma}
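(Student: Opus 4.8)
The plan is to compute $\frac{\p^2}{\p z\p\b z}E(z)$ by differentiating the first-variation formula $\frac{\p}{\p z}E(z)=\int_M A_{z\b v\b v}\,\o{u^v_i}\,\o{u^v_j}\,g^{ij}\,d\mu_g$ from (\ref{F1}) in the $\b z$-direction. The immediate difficulty is that $\b\p$ does not commute with the integral in any clean way: both the volume form $d\mu_g$ is fixed (good, since $g$ lives on the domain $M$ and is independent of $z$) but the integrand depends on $z$ and $\b z$ through the harmonic map $u(z,x)$, so one picks up terms involving $\o{\p_z u^v_i}$ and $\o{u^v_{\b z}}$ which are \emph{not} expressible purely in terms of $du$ and $A$. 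The strategy of Axelsson--Schumacher, which we follow, is to absorb these unwanted derivative-of-$u$ terms into divergence terms that integrate to zero by Stokes' theorem.

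First I would apply Lemma \ref{lemma3} to get $\frac{\p}{\p\b z}(A_{z\b v\b v}\o{u^v_i}\o{u^v_j}g^{ij})$, which produces a $c(\phi)_{z\b z;\b v\b v}$ term, two terms involving $\o{a^v_{z;v}}$ and $\o{a^v_z}$, a term with $\p_{\b v}A_{z\b v\b v}\o{u^v_z}$, and crucially the term $2A_{z\b v\b v}\o{\p_z u^v_i}\o{u^v_j}g^{ij}$ carrying the obstruction $\o{\p_z u^v_i}$. Next I would bring in the two divergence terms $-2\,\mathrm{div}(\alpha)$ and $\mathrm{div}(c(\phi)_{z\b z;\b v}\o{u^v_j}dx^j)$, expanded via Lemma \ref{lemma4} and (\ref{div}); since each integrates to zero, adding them changes nothing but is designed precisely so that the $\o{\p_i u^v_z}$ pieces and the $a^v_z$, $a^v_{z;v}$ pieces cancel against their counterparts coming out of Lemma \ref{lemma3}. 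One also uses $\int_M \mathrm{div}(c(\phi)_{z\b z;\b v}\o{u^v_j}dx^j)d\mu_g=0$ to convert the bare $c(\phi)_{z\b z;\b v\b v}\o{u^v_i}\o{u^v_j}g^{ij}$ term into $c(\phi)_{z\b z}g^{ij}\phi_{v\b v}u^v_i\o{u^v_j}$ up to $\mathrm{div}$-exact pieces, integration by parts moving one $\n_{\b v}$ off $c(\phi)$ and using the harmonic equation (\ref{1.5}) to kill $\n_i\o{u^v_j}$ contractions. The remaining genuine contributions are then the three displayed terms: $c(\phi)_{z\b z}g^{ij}\phi_{v\b v}u^v_i\o{u^v_j}$, the positive-looking $g^{ij}A_{z\b v\b v}A^{\b v}_{\b z v}u^v_i\o{u^v_j}$ coming from the $A^v_{z\b v}\o{u^v_i}$ term in $\n_i(u^v_z-a^v_z)$, and $-g^{ij}\n_iA_{z\b v\b v}\o{u^v_j}(\o{u^v_z}-\o{a^v_z})$ which is exactly $-\langle \n A, W\rangle$-type and survives because $\n A$ is genuinely nonzero in dimension $>1$.

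The bookkeeping obstacle — and the step I expect to be the most error-prone — is matching signs and Christoffel/connection terms between the three expressions (Lemma \ref{lemma3}, Lemma \ref{lemma4}, and the divergence of $c(\phi)_{z\b z;\b v}\o{u^v_j}dx^j$), making sure the covariant derivatives $\n_i$ acting on tensors with mixed $v,\b v$ weights are handled with the correct connection coefficient $\phi_v u^v_i$ from (\ref{cov2}); a single mismatched term here is what produced the extraneous $\frac{1}{4\ell}\int A_i\cdot\int A_{\b j}$ in \cite[Theorem 6.2]{AS} (cf. the footnote to Proposition \ref{prop2}). Once the three terms are isolated, I would write the result in the invariant form stated and note that it sets up the subsequent identification of $-\langle \n A,W\rangle$ with an operator expression via the linearized harmonic map equation for $W$, leading eventually to (\ref{0.2}) in Theorem \ref{thm0.4}.
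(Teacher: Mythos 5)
Your proposal follows essentially the same route as the paper: differentiate the first-variation formula in $\b z$, insert the two divergence-exact terms $-2\,\text{div}(\alpha)$ and $\text{div}(c(\phi)_{z\b{z};\b{v}}\o{u^v_j}dx^j)$ (which integrate to zero), expand via Lemmas \ref{lemma3} and \ref{lemma4}, and let the $\o{\p_i u^v_z}$ and $a^v_z$, $a^v_{z;v}$ terms cancel. One correction to the mechanism you describe: the $c(\phi)_{z\b{z};\b{v}\b{v}}$ term from Lemma \ref{lemma3} is cancelled directly by the corresponding piece of $\text{div}(c(\phi)_{z\b{z};\b{v}}\o{u^v_j}dx^j)$, and the surviving piece $g^{ij}c(\phi)_{z\b{z};v\b{v}}u^v_i\o{u^v_j}$ is converted into $c(\phi)_{z\b{z}}g^{ij}\phi_{v\b{v}}u^v_i\o{u^v_j}$ not by integration by parts but by the elliptic identity $(\Box+1)c(\phi)_{z\b{\beta}}=A^{v}_{z\b{v}}A^{\b{v}}_{\b{z}v}$ of Lemma \ref{lemma1}(v); that same identity contributes $-A\o{A}$ and is exactly what reduces the coefficient of $g^{ij}A_{z\b{v}\b{v}}A^{\b{v}}_{\b{z}v}u^v_i\o{u^v_j}$ from the $2$ produced by $-2\,\text{div}(\alpha)$ down to the $1$ in the stated formula.
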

  \begin{proof}
 Similar computations as above give
\begin{aligns}\label{2.1}
\text{div}(c(\phi)_{z\b{z};\b{v}}\o{u^v_j}dx^j) &=g^{ij}\n_i(c(\phi)_{z\b{z};\b{v}}\o{u^v_j})=g^{ij}(\n_ic(\phi)_{z\b{z};\b{v}})\o{u^v_j}	\\
&=g^{ij}c(\phi)_{z\b{z};v\b{v}}u^v_i\o{u^v_j}+g^{ij}c(\phi)_{z\b{z};\b{v}\b{v}}\o{u^v_i}\o{u^v_j}.
\end{aligns}
Adding up the formulas
in Lemmas \ref{lemma3}, \ref{lemma4} and (\ref{2.1}) results in
\begin{aligns}\label{2.2}
&\quad \frac{\p}{\p\b{z}}(A_{z\b{v}\b{v}}\o{u^v_i}\o{u^v_j}g^{ij})	-2\text{div}(\alpha)+\text{div}(c(\phi)_{z\b{z};\b{v}}\o{u^v_j}dx^j)\\
&=(-c(\phi)_{z\b{z};\b{v}\b{v}}-2A_{z\b{v}\b{v}}\o{a^v_{z;v}}-A_{z\b{v}\b{v};\b{v}}\o{a^v_z})\o{u^v_i}\o{u^v_j}g^{ij}\\
	&\quad+\p_{\b{v}}A_{z\b{v}\b{v}}\o{u^v_z}\o{u^v_i}\o{u^v_j}g^{ij}+2A_{z\b{v}\b{v}}\o{\p_z u^v_i}\o{u^v_j}g^{ij}-2g^{ij}A_{z\b{v}\b{v};\b{v}}\o{u^v_i}\o{u^v_j}(\o{u^v_z}-\o{a^v_z})\\
&\quad -2g^{ij}A_{z\b{v}\b{v}}\o{u^v_j}(\o{\p_i u^v_z+(\p_v\log\phi_{v\b{v}})u^v_z u^v_i-A^{v}_{z\b{v}}\o{u^v_i}-a^{v}_{z;v}u^v_i})\\
	&\quad+g^{ij}c(\phi)_{z\b{z};v\b{v}}u^v_i\o{u^v_j}+g^{ij}c(\phi)_{z\b{z};\b{v}\b{v}}\o{u^v_i}\o{u^v_j}\\
	&=g^{ij}c(\phi)_{z\b{z};v\b{v}}u^v_i\o{u^v_j}+2g^{ij}A_{z\b{v}\b{v}}A^{\b{v}}_{\b{z}v}u^v_i\o{u^v_j}-g^{ij}A_{z\b{v}\b{v};\b{v}}\o{u^v_i}\o{u^v_j}(\o{u^v_z}-\o{a^v_z})\\
	&=c(\phi)_{z\b{z}}g^{ij}\phi_{v\b{v}}u^v_i\o{u^v_j}+g^{ij}A_{z\b{v}\b{v}}A^{\b{v}}_{\b{z}v}u^v_i\o{u^v_j}-g^{ij}\n_i A_{z\b{v}\b{v}}\o{u^v_j}(\o{u^v_z}-\o{a^v_z}),
\end{aligns}
where in the second equality we used 
$A_{z\b{v}\b{v};\b{v}}=\p_{\b{v}}A_{z\b{v}\b{v}}-2A_{z\b{v}\b{v}}\p_{\b{v}}\log\phi_{v\b{v}},$
the last equality follows from Lemma \ref{lemma1} (iii) (v).
Our lemma now follows from Theorem \ref{thm2.1},  (\ref{div}) and (\ref{2.2}).
  \end{proof}

Now we set 
\begin{align}\label{N1}
	& V=P_{\mc{V}}U_*(\frac{\p}{\p\b{z}})=P_{\mc{V}}\left(\frac{\p}{\p\b{z}}+\o{u^v_z}\frac{\p}{\p\b{v}}+u^v_{\b{z}}\frac{\p}{\p v}\right)=u^v_{\b{z}}\frac{\p}{\p v};\\\label{N2}
	&\mc{L}=\Delta+\frac{1}{2}|du|^2=\Delta+g^{ij}\phi_{v\b{v}}u^v_i\o{u^v_j};\\\label{N3}
	&\mc{G}=g^{ij}\phi_{v\b{v}}u^v_i u^v_j \frac{\p}{\p v}\otimes d\b{v}\in \text{Hom}(u^*\o{T\mc{X}_z},u^*T\mc{X}_z).
\end{align}
By conjugation, $\o{\mc{G}}=g^{ij}\phi_{v\b{v}}\o{u^v_i} \o{u^v_j} \frac{\p}{\p \b{v}}\otimes dv\in \text{Hom}(u^*T\mc{X}_z,u^*\o{T\mc{X}_z)}$.
\begin{lemma}\label{lemma5}
	We have 
	\begin{itemize}
	\item[(i)] $\n A=0$;
	\item[(ii)] $\mc{L}(W)=\mc{G}(\o{V})-\n^*A$;
	\item[(iii)] $\mc{L}(\o{V})=\o{\mc{G}}(W)$.
	\end{itemize}
\end{lemma}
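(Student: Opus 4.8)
The plan is to prove (i) by a direct computation parallel to (\ref{1.6}), and to obtain (ii) and (iii) by differentiating the harmonic map equation (\ref{1.5}) in the $z$- and $\b{z}$-directions.

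\textbf{Part (i).} Write $A = A^v_{z\b{v}}\o{u^v_j}\,dx^j\otimes\frac{\p}{\p v}$ with $A^v_{z\b{v}} = A_{z\b{v}\b{v}}\phi^{v\b{v}}$, regarded as a function of $(z, u(z,x))$. Exactly as in (\ref{1.6}) the Christoffel symbols $\Gamma^k_{ij}$ are symmetric and drop out under antisymmetrization, so $\n A = (\p_i(A^v_{z\b{v}}\o{u^v_j}) + \phi_v u^v_i A^v_{z\b{v}}\o{u^v_j})\,dx^i\w dx^j\otimes\frac{\p}{\p v}$. Now Lemma \ref{lemma1}(iii) together with $e^{\phi} = \phi_{v\b{v}}$ give $\p_v A^v_{z\b{v}} = -\phi_v A^v_{z\b{v}}$; expanding $\p_i A^v_{z\b{v}} = (\p_v A^v_{z\b{v}})u^v_i + (\p_{\b{v}}A^v_{z\b{v}})\o{u^v_i}$, the $u^v_i$-terms cancel against $\phi_v u^v_i A^v_{z\b{v}}\o{u^v_j}$ and there remain only $(\p_{\b{v}}A^v_{z\b{v}})\o{u^v_i}\o{u^v_j} + A^v_{z\b{v}}\p_i\o{u^v_j}$, which is symmetric in $i,j$ (the second term because $\p_i\o{u^v_j} = \p_j\o{u^v_i}$). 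Hence $\n A = 0$.

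\textbf{Parts (ii) and (iii).} Abbreviate $\mc{L}(s) := \Delta s + \frac12|du|^2 s$ for sections $s$ of $u^*T\mc{X}_z$, and set $du^{(1,0)} := u^v_i\,dx^i\otimes\frac{\p}{\p v}$. For (ii), since $W = u^v_z\frac{\p}{\p v} - a^v_z\frac{\p}{\p v}$, linearity gives $\mc{L}(W) = \mc{L}(u^v_z\frac{\p}{\p v}) - \mc{L}(a^v_z\frac{\p}{\p v})$, and I would compute the two terms separately. Differentiating the harmonic equation $g^{ij}\n_i u^v_j = 0$ in $z$ — using $\p_z\Gamma^k_{ij} = 0$, the harmonic equation for $u$ once more, and the expansion $\p_z\phi_v = \phi_{vz} + \phi_{vv}u^v_z + \phi_{v\b{v}}\o{u^v_{\b{z}}}$ — one finds, after the $\phi_{vv}$-terms cancel and the term $g^{ij}\phi_{v\b{v}}u^v_i\o{u^v_j}(\cdot)$ coming from $\p_i\phi_v = \phi_{vv}u^v_i + \phi_{v\b{v}}\o{u^v_i}$ is identified as $\frac12|du|^2(\cdot)$, that $\mc{L}(u^v_z\frac{\p}{\p v}) = \mc{G}(\o V) + g^{ij}\phi_{vz}u^v_i u^v_j\frac{\p}{\p v}$, the surviving metric–variation term $g^{ij}\phi_{v\b{v}}u^v_i u^v_j\o{u^v_{\b{z}}}\frac{\p}{\p v}$ being precisely $\mc{G}(\o V)$. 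For the horizontal lift, the chain rule and Lemma \ref{lemma1}(ii) give $\n_i(a^v_z) = a^v_{z;v}u^v_i + A^v_{z\b{v}}\o{u^v_i}$, i.e.\ $\n(a^v_z\frac{\p}{\p v}) = a^v_{z;v}\,du^{(1,0)} + A$; hence $\Delta(a^v_z\frac{\p}{\p v}) = \n^*(a^v_{z;v}\,du^{(1,0)}) + \n^*A$, and computing $\n^*(a^v_{z;v}\,du^{(1,0)})$ from the harmonic equation, Lemma \ref{lemma1}(i) ($a^v_{z;vv} = -\phi_{zv}$) and the Ricci identity on $(\mc{X}_z,\Phi)$ normalized by (\ref{1.1}) (through which $a^v_{z;v;\b{v}}$ becomes a multiple of $\phi_{v\b{v}}a^v_z$) yields $\mc{L}(a^v_z\frac{\p}{\p v}) = g^{ij}\phi_{vz}u^v_i u^v_j\frac{\p}{\p v} + \n^*A$. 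Subtracting the two, the $\phi_{vz}$-terms cancel and $\mc{L}(W) = \mc{G}(\o V) - \n^*A$. For (iii) the argument is the same but simpler, since $u^v_{\b{z}}$ is already the full vertical $(1,0)$-part of $U_*\frac{\p}{\p\b{z}}$ with no horizontal $(1,0)$-part to remove: differentiating $g^{ij}\n_i u^v_j = 0$ in $\b{z}$ gives $\mc{L}(u^v_{\b{z}}\frac{\p}{\p v}) = \mc{G}(\o W)$ — here $-\phi_{v\b{v}}\o{a^v_z} = \phi_{v\b{z}}$ matches the $\o{a^v_z}$-part of $\mc{G}(\o W)$ with the $\phi_{v\b{z}}$ produced by $\p_{\b{z}}\phi_v$ — and conjugating gives $\mc{L}(\o V) = \o{\mc{G}}(W)$.

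\textbf{Main obstacle.} The technical heart is the two identities for $\mc{L}(u^v_z\frac{\p}{\p v})$ and $\mc{L}(a^v_z\frac{\p}{\p v})$: one must track the $z$-dependence of the fibre Christoffel symbol $\phi_v$, commute the $M$-covariant derivatives using the Ricci identity with the surface curvature (\ref{1.1}), and verify both that the $\phi_{vv}$-contributions cancel identically and that the remainder (the terms not of the form $\Delta$, $\frac12|du|^2(\cdot)$ or $\mc{G}(\cdot)$) assembles into exactly $\pm\n^*A$ with the correct sign. Once these two are established the subtraction giving (ii) and the conjugation giving (iii) are immediate; the three formulas are then used in the sequel to rewrite the second term in (\ref{0.2}).
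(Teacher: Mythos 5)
Your proposal is correct and follows essentially the same route as the paper: differentiate the harmonic equation in $z$ (resp.\ $\b z$), use Lemma \ref{lemma1}(i)--(iii) and the identity $\n_i a^v_z=a^v_{z;v}u^v_i+A^v_{z\b v}\o{u^v_i}$, and check that the $\phi_{vz}$- and $\phi_{vv}$-contributions cancel; your splitting of $W$ into $u^v_z\frac{\p}{\p v}$ and $a^v_z\frac{\p}{\p v}$ is exactly the organization of the paper's alternative argument in Remark \ref{rem2.5}, while the paper's main proof runs the same computation on $u^v_z-a^v_z$ in one pass.
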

\begin{proof}
(i) By the definition of $\n$ in (\ref{nabla}), Lemma \ref{lemma1} (ii) and (\ref{defA}), $\n A$ is
\begin{align*}
\n A
&=\n\left(A_{z\b{v}\b{v}}\o{u^v_l}\phi^{v\b{v}}dx^l\otimes \frac{\p}{\p v}\right)	\\
&=\n_i\left(A_{z\b{v}\b{v}}\o{u^v_l}\phi^{v\b{v}}\right)	dx^i\wedge dx^l\otimes \frac{\p}{\p v}\\
&=\left(A_{z\b{v}\b{v};\b{v}}\o{u^v_l}\o{u^v_i}\phi^{v\b{v}}+A_{z\b{v}\b{v}}(\p_i\p_l\o{u^v}-\o{\Gamma^k_{il}}\o{u^v_k}+\phi_{\b{v}}\o{u^v_l}\o{u^v_i})\phi^{v\b{v}}\right)dx^i\wedge dx^l\otimes \frac{\p}{\p v}\\
&=0.
\end{align*}
Note that the last equality follows as follows:  If we set $$\alpha_{il}=A_{z\b{v}\b{v};\b{v}}\o{u^v_l}\o{u^v_i}\phi^{v\b{v}}+A_{z\b{v}\b{v}}(\p_i\p_l\o{u^v}-\o{\Gamma^k_{il}}\o{u^v_k}+\phi_{\b{v}}\o{u^v_l}\o{u^v_i})\phi^{v\b{v}},$$ then $\alpha_{il}=\alpha_{li}$, hence $\alpha_{il} dx^i\wedge dx^l=\alpha_{il}dx^l\wedge dx^i$, which implies that $\alpha_{il} dx^i\wedge dx^l=0$.

(ii) When acting on $W\in A^0(M, u^*T\mc{X}_z)$ given
in (\ref{W-0}), $-\Delta=g^{ij}\n_i\n_j$, and
\begin{align}
-\Delta W=g^{ij}(u^v_z-a^v_z)_{;ji}	\frac{\p}{\p v}. 
\end{align}
The coefficient above, by Lemma \ref{lemma1} (i)-(iii)  and
$-a_{z;v}^v=\phi_z$, is
\begin{aligns}\label{2.4}
  &\quad
  g^{ij}(u^v_z-a^v_z)_{;ji} \\
  &=g^{ij}(\p_j u^v_z+\phi_v u^v_z u^v_j-A^{v}_{z\b{v}}\o{u^v_j}-a^v_{z;v}u^v_j)_{;i}\\
	 &=g^{ij}\left[\p_i\p_j u^v_z-\p_k u^v_z\Gamma^k_{ij}+\p_ju^v_z\phi_v u^v_i\right.\\
	&\quad\left. +(\phi_{vv}u^v_i+\phi_{v\b{v}}\o{u^v_i}-\phi_v\phi_v u^v_i)u^v_zu^v_j\right.\\
	 &\quad \left.+\phi_v(\p_i u^v_z+u^v_z\phi_v u^v_i)u^v_j-A^{v}_{z\b{v};\b{v}}\o{u^v_i}\o{u^v_j}+\phi_{zv}u^v_iu^v_j+\phi_{z\b{v}}\o{u^v_i}u^v_j\right].
\end{aligns}
The first three terms in RHS of (\ref{2.4}), by (\ref{1.5}), are
\begin{aligns}\label{2.5}
	&\quad g^{ij}\left(\p_i\p_j u^v_z-\p_k u^v_z\Gamma^k_{ij}+\p_ju^v_z\phi_v u^v_i\right)\\
	&=\p_z\left[g^{ij}(\p_i\p_j u^v-\p_k u^v\Gamma^k_{ij}+\p_ju^v\phi_v u^v_i)\right]\\
	&\quad-g^{ij}u^v_j\phi_v \p_z u^v_i-g^{ij}u^v_j\p_z\phi_v u^v_i\\
	&=\p_z(g^{ij}\n_j u^v_i)-g^{ij}u^v_j\phi_v \p_z u^v_i-g^{ij}u^v_ju^v_i\p_z\phi_v(z,u(z,x)) \\
	&=g^{ij}\left[-(\phi_{zv}+\phi_{v\b{v}}\o{u^v_{\b{z}}}+\phi_{vv}u^v_z)u^v_iu^v_j-u^v_j\phi_v\p_z u^v_i\right].
\end{aligns}
Substituting (\ref{2.5}) into (\ref{2.4}),  we obtain
\begin{aligns}\label{2.6}
  &\quad g^{ij}(u^v_z-a^v_z)_{;ji}\\
  &=g^{ij}\left[-(\phi_{zv}+\phi_{v\b{v}}\o{u^v_{\b{z}}}+\phi_{vv}u^v_z)u^v_iu^v_j-u^v_j\phi_v\p_z u^v_i\right.\\
	&\quad\left. +(\phi_{vv}u^v_i+\phi_{v\b{v}}\o{u^v_i}-\phi_v\phi_v u^v_i)u^v_zu^v_j\right.\\
	 &\quad \left.+\phi_v(\p_i u^v_z+u^v_z\phi_v u^v_i)u^v_j-A^{v}_{z\b{v};\b{v}}\o{u^v_i}\o{u^v_j} +\phi_{zv}u^v_iu^v_j+\phi_{z\b{v}}\o{u^v_i}u^v_j\right]\\
	&=(g^{ij}\phi_{v\b{v}}\o{u^v_i}u^v_j) u^v_z-g^{ij}A^{v}_{z\b{v};\b{v}}\o{u^v_i}\o{u^v_j}+g^{ij}\phi_{z\b{v}}\o{u^v_i}u^v_j-g^{ij}\phi_{v\b{v}}\o{u^v_{\b{z}}}u^v_iu^v_j.\\
	&=(g^{ij}\phi_{v\b{v}}\o{u^v_i}u^v_j)(u^v_z-a^v_z)-g^{ij}\phi_{v\b{v}}\o{u^v_{\b{z}}}u^v_iu^v_j-g^{ij}A_{z\b{v}\b{v};\b{v}}\o{u^v_i}\o{u^v_j}\phi^{v\b{v}}.
\end{aligns}
By (\ref{nabla*}), (\ref{defA}), (\ref{W-0}), (\ref{N1})-(\ref{N3}) and (\ref{2.6}), we get
\begin{align*}
	\mc{L}(W) &=\mc{L}\left((u^v_z-a^v_z)\frac{\p}{\p v}\right)\\
	&=\left(g^{ij}\phi_{v\b{v}}\o{u^v_{\b{z}}}u^v_iu^v_j+g^{ij}\n_i A_{z\b{v}\b{v}}\o{u^v_j}\phi^{v\b{v}}\right)\frac{\p}{\p v}\\
	&=\mc{G}(\o{V})-\n^*A.
\end{align*}
(iii) Similarly, by a direct calculation, 
$\nabla_i \phi_v=\phi_{vv}u_i^v+\phi_{v\bar v}\overline{u_i^v}-\phi_v\phi_v u_i^v $,
and
\begin{align*}
 g^{ij}(u^v_{\b{z}})_{;ji}&=g^{ij}\left(\p_j u^v_{\b{z}}+u^v_{\bar z}\phi_v u^v_j\right)_{;i}\\
 &=g^{ij}\left[\p_i\p_ju^v_{\b{z}}-\p_k u^v_{\b{z}}\Gamma^k_{ij}+\p_j u^v_{\b{z}}\phi_v u^v_i+(\p_i u^v_{\b{z}}+u^v_{\b{z}}\phi_v u^v_i)\phi_v u^v_j\right.\\
 &\quad\left.+ u^v_{\b{z}}u^v_j\left(\phi_{vv}u^v_i+\phi_{v\b{v}}\o{u^v_i}-\phi_v\phi_v u^v_i\right)\right]	\\
 &=g^{ij}\left[-(\phi_{v\b{z}}+\phi_{vv}u^v_{\b{z}}+\phi_{v\b{v}}\o{u^v_{z}})u^v_iu^v_j-\p_{\b{z}}u^v_i u^v_j\phi_v\right.\\
 &\quad\left.+(\p_i u^v_{\b{z}}+u^v_{\b{z}}\phi_v u^v_i)\phi_v u^v_j\right.\\
 &\quad\left.+ u^v_{\b{z}}u^v_j\left(\phi_{vv}u^v_i+\phi_{v\b{v}}\o{u^v_i}-\phi_v\phi_v u^v_i\right)\right]\\
 &=(g^{ij}\phi_{v\b{v}}\o{u^v_i}u^v_j) u^v_{\b{z}}-g^{ij}\phi_{v\b{z}}u^v_iu^v_j-g^{ij}\phi_{v\b{v}}\o{u^v_{z}}u^v_i u^v_j\\
 &=(g^{ij}\phi_{v\b{v}}\o{u^v_i}u^v_j) u^v_{\b{z}}-g^{ij}\phi_{v\b{v}}u^v_i u^v_j(\o{u^v_z}-\o{a^v_z}),
 \end{align*}
 where the third equality follows from (\ref{2.5}) by replacing $z$ by $\b{z}$. 
By conjugation, we conclude that 
\begin{align}
\mc{L}(\o{V})=\mc{L}\left( \o{u^v_{\b{z}}}\frac{\p}{\p\b{v}}\right)=\left(g^{ij}\phi_{v\b{v}}\o{u^v_i} \o{u^v_j}(u^v_z-a^v_z)\right)\frac{\p}{\p\b{v}}=\o{\mc{G}}(W).
\end{align}
\end{proof}
\begin{rem}\label{rem2.5}
The formulas (ii) and (iii) above can also be proved easily
  by choosing a normal coordinates $x^j$
  near $x_0$ and
  holomorphic normal coordinate $v$ at $v_0=u(z_0, x_0)$.
We sketch the proof of (ii) here. 
The Christoffel symbol
on the Riemann surface $\mathcal X_z$ is
$\Gamma_{vv}^v=
\partial_v \log \phi_{v\bar v}=\phi_{v}$ and $\phi_v=\partial_v\phi_v=0$  at $v_0$, and $\Gamma^i_{jk}=0$ at $x_0\in M$.
Denote  $\nabla$ also the connection
on $u^*T \mathcal X_z\otimes T^*M$. We have
$$
\nabla W
=d(u_z^v-a_z^v)\otimes \frac{\partial}{\partial v}
+(u_z^v-a_z^v)\otimes \phi_v du^v\frac{\partial}{\partial v}
$$
and 
\begin{equation*}
  \begin{split}
\nabla
\nabla W
&=\nabla (d(u_z^v-a_z^v))
\otimes \frac{\partial}{\partial v}
+ d(u_z^v-a_z^v)\otimes \phi_v du^v\frac{\partial}{\partial v}
+d((u_z^v-a_z^v)\phi_v)\otimes
du^v \otimes \frac{\partial}{\partial v}
\\
&+ (u_z^v-a_z^v)\phi_v\otimes
\nabla(du^v) \otimes \frac{\partial}{\partial v}
+ (u_z^v-a_z^v)\phi_v du^v
\otimes\phi_v du^v \frac{\partial}{\partial v}.    
  \end{split}
\end{equation*}
Evaluating it at $v_0=u(z_0, x_0)$ we get
\begin{equation*}
  \begin{split}
\nabla
\nabla W
&=\nabla (d(u_z^v-a_z^v))
\otimes \frac{\partial}{\partial v}
+d((u_z^v-a_z^v) \phi_v) \otimes
du^v \otimes \frac{\partial}{\partial v}.
  \end{split}
\end{equation*}
At $v_0$ the differential $d((u_z^v-a_z^v) \phi_v)
=d(u_z^v-a_z^v) \phi_v +(u_z^v-a_z^v) d\phi_v 
=(u_z^v-a_z^v) \partial_v\phi_v du^v +
(u_z^v-a_z^v) \partial_{\bar v} \phi_v d\bar u^v 
$ and 
$\Delta W
=-\text{Tr}_g\nabla \nabla W$ is
\begin{equation}
  \label{eq:eval-W-0}
\left(\Delta_g (u_z^v) -
\Delta_g (a_z^v)  - (u_z^v -a_z^v)\phi_{v\bar v} 
\text{Tr}_g (d \bar{u^v} \otimes du^v)\right)\frac{\partial}{\partial v}.  
\end{equation}
Here $\text{Tr}_g$ is the trace function on $TM\otimes TM$
with respect to the metric $g=(g^{ij})$.
Differentiating the harmonic equation $\text{Tr}_g \nabla du^v=0$ in $z$ and
evaluated at $v_0=u(z_0, x_0)$ we find
$$
\Delta_g (u_z^v) =g^{ij}(
 \partial_{\bar v}
\Gamma_{v v}^v
 u^{\bar v}_z
+\partial_z \Gamma_{v v}^v 
)u_i^v u_j^v
$$
since 
$\Gamma_{v v}^v=
\partial_v \log \phi_{v\bar v}=\phi_v$.
 The first term above gives
$$
g^{ij}\phi_{v\b{v}}\o{u^v_{\b{z}}}u^v_iu^v_j\frac{\p}{\p v} =\mathcal{G}(\bar V), 
$$
and  the second term
is
\begin{align}\label{2.17}
g^{ij}\partial_z \Gamma_{v v}^v u^v_i u^v_j
=g^{ij}\phi_{vz} u^v_i u^v_j.
\end{align}
The term
$\Delta_g (a_z^v)$  is
$$
\Delta_g (a_z^v)
=-\text{Tr}_g \nabla_g d a_z^v
=-\text{Tr}_g \nabla_g 
\left(
\partial_v(a_z^v) u_i^v dx^i
+ \partial_{\bar v}(a_z^v) \overline{u_i^v} dx^i
\right),
$$
the second term
is 
$$
-\text{Tr}_g \nabla_g  (\partial_{\bar v}(a_z^v) \overline{u_i^v} dx^i)\frac{\p}{\p v}
=-\nabla_i (A^v_{z\b{v}}\b{u}^v_i) \frac{\p}{\p v}
=\nabla^* A
$$
and the first term, using the harmonicity of $u$
with normal coordinates $(x^j, v)$, is 
$$-\text{Tr}_g \nabla_g 
\partial_v(a_z^v) u_i^v dx^i=g^{ij}\phi_{zv}u^v_i u^v_j$$
at $v_0=u(z_0, x_0)$,
which is canceled by (\ref{2.17}); we omit the details here.
Finally the third term in
$(\ref{eq:eval-W-0})$ is
$$
-\frac 12 |du|^2 W.
$$
Thus
$$
\Delta W
=\mathcal{G}(\bar V) -\nabla^* A 
-\frac 12 |du|^2 W,
$$
i.e.,
$$
\mathcal L (W)=\Delta W
+\frac 12 |du|^2 W
=\mathcal{G}(\bar V) -\nabla^* A.
$$
This completes the proof of (ii).
\end{rem}
\begin{lemma}\label{lemma6}
  The operators $\mc{L}-\mc{G}\mc{L}^{-1}\o{\mc{G}}$ and $\frac{1}{2}|du|^2-\mc{G}\mc{L}^{-1}\o{\mc{G}}$	are non-negative and symmetric   when acting on 
  $ A^{0}(M, u^*T\mc{X}_z)$, and 
\begin{align}\label{2.10}
\text{Ker}\left(\mc{L}-\mc{G}\mc{L}^{-1}\o{\mc{G}}\right)\subset H=\text{Ker}\Delta.
\end{align}
\end{lemma}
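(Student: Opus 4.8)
The plan is to realise $\mc{G}\mc{L}^{-1}\o{\mc{G}}$ as a symmetric non-negative operator that is dominated simultaneously by $\mc{L}$ and by the zeroth order operator $\frac{1}{2}|du|^2$. First I would check that $\mc{L}$ is invertible: for $s\in A^0(M,u^*T\mc{X}_z)$ one has $\n^* s=0$, so $\langle\mc{L}s,s\rangle=\|\n s\|^2+\int_M\frac{1}{2}|du|^2|s|^2\,d\mu_g\geq 0$, and vanishing forces $\n s=0$ (hence $|s|$ constant) together with $|du|^2|s|^2\equiv 0$; unless $du\equiv 0$ (a trivial case where $\mc{G}=0$, $\mc{L}=\Delta$ and everything is immediate) this gives $s\equiv 0$. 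Thus $\mc{L}$ is a positive self-adjoint elliptic operator with trivial kernel on the compact manifold $M$, so $\mc{L}^{-1}$ exists as a smoothing operator, and the same holds for the conjugate operator on $u^*\o{T\mc{X}_z}$. A short pointwise computation of the two bundle maps gives $\mc{G}^*=\o{\mc{G}}$ with respect to the fibre metrics, whence $(\mc{G}\mc{L}^{-1}\o{\mc{G}})^*=\mc{G}\mc{L}^{-1}\o{\mc{G}}$ and $\langle\mc{G}\mc{L}^{-1}\o{\mc{G}}s,s\rangle=\langle\mc{L}^{-1}(\o{\mc{G}}s),\o{\mc{G}}s\rangle\geq 0$; so both operators in the statement are symmetric, and what remains is the two positivity inequalities and the kernel inclusion.

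The crucial input is the pointwise bound $|\o{\mc{G}}s|\leq\frac{1}{2}|du|^2\,|s|$. It follows from the Cauchy--Schwarz inequality for the positive-definite form $(g^{ij})$, applied to the covectors $(u^v_i)$ and $(\o{u^v_i})$, which yields $|g^{ij}\o{u^v_i}\o{u^v_j}|\leq g^{ij}u^v_i\o{u^v_j}$; since $|\frac{\p}{\p v}|=|\frac{\p}{\p\b{v}}|=\sqrt{\phi_{v\b{v}}}$, this translates into the asserted bound for $\o{\mc{G}}s$. Now set $t=\o{\mc{G}}s$ and $p=\mc{L}^{-1}t$; then $X:=\langle\mc{L}^{-1}t,t\rangle=\langle\mc{L}p,p\rangle=\langle t,p\rangle$ is real and $\geq 0$, and $\langle\mc{L}p,p\rangle=\|\n p\|^2+\int_M\frac{1}{2}|du|^2|p|^2\,d\mu_g\geq\int_M\frac{1}{2}|du|^2|p|^2\,d\mu_g$. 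Bounding $X=|\langle t,p\rangle|\leq\int_M|\o{\mc{G}}s|\,|p|\,d\mu_g\leq\int_M\frac{1}{2}|du|^2|s|\,|p|\,d\mu_g$ and applying Cauchy--Schwarz for the measure $\frac{1}{2}|du|^2\,d\mu_g$ gives $X\leq Y^{1/2}X^{1/2}$ with $Y:=\langle\frac{1}{2}|du|^2 s,s\rangle$, hence $X\leq Y$. This is precisely $\langle\big(\frac{1}{2}|du|^2-\mc{G}\mc{L}^{-1}\o{\mc{G}}\big)s,s\rangle\geq 0$.

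With this in hand the first operator is immediate: $\mc{L}-\mc{G}\mc{L}^{-1}\o{\mc{G}}=\Delta+\big(\frac{1}{2}|du|^2-\mc{G}\mc{L}^{-1}\o{\mc{G}}\big)$ is a sum of two symmetric non-negative operators. For the kernel inclusion, if $s\in\Ker\big(\mc{L}-\mc{G}\mc{L}^{-1}\o{\mc{G}}\big)$ then $\langle\Delta s,s\rangle+\langle\big(\frac{1}{2}|du|^2-\mc{G}\mc{L}^{-1}\o{\mc{G}}\big)s,s\rangle=0$ forces each non-negative summand to vanish; in particular $\|\n s\|^2=\langle\Delta s,s\rangle=0$, so $\n s=0$ and $\Delta s=\n^*\n s=0$, i.e. $s\in H=\Ker\Delta$. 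The one place that genuinely needs care is the domination $\frac{1}{2}|du|^2\geq\mc{G}\mc{L}^{-1}\o{\mc{G}}$: one cannot simply invert the operator inequality $\mc{L}\geq\frac{1}{2}|du|^2$ because $|du|^2$ may vanish on part of $M$, so the self-improving bound $X\leq Y^{1/2}X^{1/2}$ (equivalently, the variational characterisation of $\langle\mc{L}^{-1}t,t\rangle$) is what does the real work; the rest is routine and valid for $M$ of arbitrary dimension.
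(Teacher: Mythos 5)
Your proof is correct, and its overall skeleton coincides with the paper's: both arguments reduce everything to the single inequality $\langle(\tfrac{1}{2}|du|^2-\mc{G}\mc{L}^{-1}\o{\mc{G}})e,e\rangle\geq 0$, obtained ultimately from the pointwise Cauchy--Schwarz bound $|g^{ij}\o{u^v_i}\o{u^v_j}|\leq g^{ij}u^v_i\o{u^v_j}$ (the paper's inequality (2.7), which it verifies by the explicit computation $|u^v_i|^2|u^v_j|^2-\mathrm{Re}((\o{u^v_i})^2(u^v_j)^2)=2(ad-bc)^2\geq 0$ in normal coordinates), and then add $\Delta\geq 0$ to pass to $\mc{L}-\mc{G}\mc{L}^{-1}\o{\mc{G}}$. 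Where you genuinely diverge is in how the term $\langle\mc{L}^{-1}\o{\mc{G}}e,\o{\mc{G}}e\rangle$ is controlled: the paper bounds it by $\int_M(\tfrac{1}{2}|du|^2)^{-1}|\o{\mc{G}}e|^2d\mu_g$, i.e.\ it inverts the operator inequality $\mc{L}\geq\tfrac{1}{2}|du|^2$, which is delicate precisely where $|du|$ vanishes (the multiplication operator $(\tfrac{1}{2}|du|^2)^{-1}$ need not be bounded, so operator monotonicity of the inverse does not apply directly and would require a regularization $\tfrac{1}{2}|du|^2+\epsilon$). Your self-improving estimate $X\leq Y^{1/2}X^{1/2}$ with $X=\langle\mc{L}^{-1}\o{\mc{G}}s,\o{\mc{G}}s\rangle$ and $Y=\langle\tfrac{1}{2}|du|^2s,s\rangle$ reaches the same conclusion without ever forming $(\tfrac{1}{2}|du|^2)^{-1}$, so it is the more robust implementation of that step. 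Your kernel argument is also slightly cleaner: you write $\mc{L}-\mc{G}\mc{L}^{-1}\o{\mc{G}}=\Delta+(\tfrac{1}{2}|du|^2-\mc{G}\mc{L}^{-1}\o{\mc{G}})$ as a sum of two non-negative symmetric operators and let each quadratic form vanish separately, whereas the paper tracks the equality case through its chain of inequalities; both yield $\Ker(\mc{L}-\mc{G}\mc{L}^{-1}\o{\mc{G}})\subset\Ker\Delta$.
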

\begin{proof}
  Note first that  $\langle \mc{L} e, e \rangle >0$ for any $e\neq 0\in A^{0}(M, u^*T\mc{X}_z)$. Hence $\mc{L}^{-1}$ is well-defined.
  Note that $\mathcal L \ge \frac 12
  \frac{1}{2}|du|^2 $ as symmetric operators
on   $A^{0}(M, u^*T\mc{X}_z)$, so that for  $e \in A^{0}(M, u^*T\mc{X}_z)$, 
\begin{aligns}\label{2.8}
  &\langle (\mc{L}-{\mc{G}}\mc{L}^{-1}\o{\mc{G}}) e, e \rangle
  \\
  &\geq \langle (\frac{1}{2}|du|^2-{\mc{G}}\mc{L}^{-1}\o{\mc{G}}) e, e \rangle\\
 &= \langle (g^{ij}\phi_{v\b{v}}u^v_i\o{u^v_j}-{\mc{G}}\mc{L}^{-1}\o{\mc{G}}) e, e \rangle\\
&=\langle (g^{ij}\phi_{v\b{v}}u^v_i\o{u^v_j})e,e\rangle-\langle(g^{ij}\phi_{v\b{v}}u^v_i\o{u^v_j}+\Delta)^{-1}\o{\mc{G}}e,\o{\mc{G}}e\rangle\\
&\geq \int_M 	(g^{ij}\phi_{v\b{v}}u^v_i\o{u^v_j}-(g^{ij}\phi_{v\b{v}}u^v_i\o{u^v_j})^{-1}(g^{ij}\phi_{v\b{v}}\o{u^v_i}\o{u^v_j}g^{kl}\phi_{v\b{v}}u^v_k u^v_l))|e|^2d\mu_g,
\end{aligns}
	where the equalities hold if and only if $\Delta
        e=\Delta\o{\mc{G}}e=0$. Now we claim that 
	\begin{align}\label{2.7} g^{ij}\phi_{v\b{v}}\o{u^v_i}\o{u^v_j}g^{kl}\phi_{v\b{v}}u^v_k u^v_l\leq (g^{ij}\phi_{v\b{v}}u^v_i\o{u^v_j})^2,\end{align}
and the equality holds if and only if $u^v_i=c u^v_j$. In fact, by taking normal coordinates at a fixed point, $g_{ij}=\delta_{ij}$, the above inequality is equivalent to 
	\begin{align*}\sum_{i<j} \left(\text{Re}((\o{u^v_i})^2 (u^v_j)^2)-|u^v_i|^2|u^v_j|^2\right)\leq 0.\end{align*}
	Denote $u^v_i=a+bi$, $u^v_j=c+di$, then 
	\begin{align*}|u^v_i|^2|u^v_j|^2-\text{Re}((\o{u^v_i})^2 (u^v_j)^2)=2(ad-bc)^2\geq 0,\end{align*}
	and the equality holds iff $u^v_i=c u^v_j$ for some constant
        $c$, which completes the proof of (\ref{2.7}). Substituting
        (\ref{2.7}) into (\ref{2.8}) gives
	\begin{align}\label{2.9}
		\langle (\mc{L}-{\mc{G}}\mc{L}^{-1}\o{\mc{G}}) e, e \rangle\geq \langle (\frac{1}{2}|du|^2-{\mc{G}}\mc{L}^{-1}\o{\mc{G}}) e, e \rangle \geq 0.
	\end{align}
Moreover, if $e\in\text{Ker}\left(\mc{L}-\mc{G}\mc{L}^{-1}\o{\mc{G}}\right)$, then the equality in  (\ref{2.9}) holds, which implies $e\in \text{Ker}\Delta$. The symmetricity of  $\mc{L}-\mc{G}\mc{L}^{-1}\o{\mc{G}}$  and $\frac{1}{2}|du|^2-\mc{G}\mc{L}^{-1}\o{\mc{G}}$ follows from 
\begin{align*}
	\langle \o{\mc{G}}(e_1), e_2\rangle=\langle e_1, \mc{G}(e_2)\rangle
\end{align*}
for any $e_1\in A^0(M, u^*T\mc{X}_z)$ and $e_2\in A^0(M, u^*\o{T\mc{X}_z})$.
\end{proof}
From Lemma \ref{lemma5}, we have
\begin{align}\label{2.11}
\left(\mc{L}-\mc{G}\mc{L}^{-1}\o{\mc{G}}\right)(W)=-\n^*A.
\end{align}
By taking inverse $\left(\mc{L}-\mc{G}\mc{L}^{-1}\o{\mc{G}}\right)^{-1}$ to both sides of (\ref{2.11}), 
$$W\equiv -\left(\mc{L}-\mc{G}\mc{L}^{-1}\o{\mc{G}}\right)^{-1}\n^*A \quad \text{mod}\quad\text{Ker}\left(\mc{L}-\mc{G}\mc{L}^{-1}\o{\mc{G}}\right)$$
Combining with (\ref{2.10}), we have 
\begin{align}\label{2.12}
\n W=-\n\left(\mc{L}-\mc{G}\mc{L}^{-1}\o{\mc{G}}\right)^{-1}\n^*A.	
\end{align}
Substituting (\ref{2.12}) into (\ref{2.3}), we obtain the second variation of 
the energy.
\begin{thm}\label{thm2.2}
	The second variation of the energy is as follows:
	\begin{align}\label{2.19}
	\frac{\p^2}{\p z\p\b{z}}E(z)=\frac{1}{2}\int_M c(\phi)_{z\b{z}}|du|^2d\mu_g+\langle(Id-\n\left(\mc{L}-\mc{G}\mc{L}^{-1}\o{\mc{G}}\right)^{-1}\n^*)A,A\rangle.	
	\end{align}
\end{thm}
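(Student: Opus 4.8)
The plan is to read off the stated identity (\ref{2.19}) directly from the already established ``raw'' second variation (\ref{2.3}) together with the relation (\ref{2.12}), by matching each of the three integrands in (\ref{2.3}) with an intrinsic $L^2$-quantity. First I rewrite the curvature integrand: since $|du|^2=2g^{ij}\phi_{v\b{v}}u^v_i\o{u^v_j}$ by (\ref{energy}), the term $c(\phi)_{z\b{z}}g^{ij}\phi_{v\b{v}}u^v_i\o{u^v_j}$ equals $\frac12 c(\phi)_{z\b{z}}|du|^2$, which produces the first summand of (\ref{2.19}). Next I recognize the integrand $g^{ij}A_{z\b{v}\b{v}}A^{\b{v}}_{\b{z}v}u^v_i\o{u^v_j}$ as the pointwise norm $(A,A)$ of the tensor $A$ of (\ref{defA}): writing $A=A^v_{z\b{v}}\o{u^v_l}\,dx^l\otimes\frac{\p}{\p v}$ one has $(A,A)=A^v_{z\b{v}}\o{u^v_i}\cdot\o{A^v_{z\b{v}}\o{u^v_j}}\cdot g^{ij}\phi_{v\b{v}}=A_{z\b{v}\b{v}}A^{\b{v}}_{\b{z}v}\o{u^v_i}u^v_j\,g^{ij}$, which by symmetry of $g^{ij}$ is exactly the second integrand, so this term contributes $\langle A,A\rangle$.

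The third integrand, $-g^{ij}\n_iA_{z\b{v}\b{v}}\o{u^v_j}(\o{u^v_z}-\o{a^v_z})$, is the one requiring genuine work, and I claim it equals the pointwise pairing $(\n^*A,W)$, where $W=(u^v_z-a^v_z)\frac{\p}{\p v}$ is the vertical vector field from (\ref{W-0}). To see this I would use Lemma \ref{lemma1}(iii), which gives $\n_iA_{z\b{v}\b{v}}=A_{z\b{v}\b{v};\b{v}}\o{u^v_i}$ (the ${}_{;v}$-part vanishes), and then compute $\n^*A$ from (\ref{nabla*}): expanding $\n_j(A^v_{z\b{v}}\o{u^v_i})$, the term $A^v_{z\b{v}}\,g^{ij}\o{\n_ju^v_i}$ vanishes by the harmonic equation (\ref{1.5}), and one is left with $\n^*A=-\big(g^{ij}A_{z\b{v}\b{v};\b{v}}\phi^{v\b{v}}\o{u^v_i}\o{u^v_j}\big)\frac{\p}{\p v}$. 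Pairing this with $W$ in the bundle metric reproduces precisely the third integrand, so that term of (\ref{2.3}), integrated over $M$, equals $\langle\n^*A,W\rangle=\langle A,\n W\rangle$.

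Finally I substitute (\ref{2.12}), $\n W=-\n(\mc{L}-\mc{G}\mc{L}^{-1}\o{\mc{G}})^{-1}\n^*A$. This step is justified because, by (\ref{2.11}) (a consequence of Lemma \ref{lemma5}), $W$ solves $(\mc{L}-\mc{G}\mc{L}^{-1}\o{\mc{G}})(W)=-\n^*A$, and by Lemma \ref{lemma6} — in particular (\ref{2.10}) — the indeterminacy of $W$ lies in $\text{Ker}(\mc{L}-\mc{G}\mc{L}^{-1}\o{\mc{G}})\subset H=\text{Ker}\n\cap\text{Ker}\n^*$, which is killed by $\n$. Hence the third term becomes $\langle A,\n W\rangle=-\langle A,\n(\mc{L}-\mc{G}\mc{L}^{-1}\o{\mc{G}})^{-1}\n^*A\rangle=-\langle\n(\mc{L}-\mc{G}\mc{L}^{-1}\o{\mc{G}})^{-1}\n^*A,A\rangle$, the last equality because $(\mc{L}-\mc{G}\mc{L}^{-1}\o{\mc{G}})^{-1}$ is self-adjoint and positive (Lemma \ref{lemma6}), so the pairing is real. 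Adding the three contributions gives $\frac{\p^2}{\p z\p\b z}E(z)=\frac12\int_M c(\phi)_{z\b z}|du|^2d\mu_g+\langle(Id-\n(\mc{L}-\mc{G}\mc{L}^{-1}\o{\mc{G}})^{-1}\n^*)A,A\rangle$, which is (\ref{2.19}). The only delicate point in this scheme is the middle one: carefully tracking the bundle metric in the computation of $\n^*A$ and correctly invoking harmonicity to delete the spurious term; everything else is bookkeeping.
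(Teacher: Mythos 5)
Your proposal is correct and follows essentially the same route as the paper: it starts from the raw second-variation formula (\ref{2.3}), identifies the three integrands as $\tfrac12 c(\phi)_{z\bar z}|du|^2$, $(A,A)$ and $(\nabla^*A,W)$ (using Lemma \ref{lemma1}(iii) and the harmonic equation exactly as the paper does), and then substitutes (\ref{2.12}) with the same justification via Lemmas \ref{lemma5} and \ref{lemma6} and the kernel inclusion (\ref{2.10}). No gaps.
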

\begin{proof}
From (\ref{2.3}) and (\ref{2.12}), we have 
\begin{align*}
	\frac{\p^2}{\p z\p\b{z}}E(z) &=\int_M \left(c(\phi)_{z\b{z}}g^{ij}\phi_{v\b{v}}u^v_i\o{u^v_j}+g^{ij}A_{z\b{v}\b{v}}A^{\b{v}}_{\b{z}v}u^v_i\o{u^v_j}-g^{ij}\n_i A_{z\b{v}\b{v}}\o{u^v_j}(\o{u^v_z}-\o{a^v_z})\right)d\mu_g\\
	&=\frac{1}{2}\int_M c(\phi)_{z\b{z}}|du|^2 d\mu_g+ \langle A, A\rangle+\langle\n^*A, W\rangle\\
	&=\frac{1}{2}\int_M c(\phi)_{z\b{z}}|du|^2 d\mu_g+ \langle A, A\rangle+\langle A, \n W\rangle\\
	&=\frac{1}{2}\int_M c(\phi)_{z\b{z}}|du|^2 d\mu_g+ \langle A, A\rangle+\langle A, -\n\left(\mc{L}-\mc{G}\mc{L}^{-1}\o{\mc{G}}\right)^{-1}\n^*A\rangle\\
	&=\frac{1}{2}\int_M c(\phi)_{z\b{z}}|du|^2d\mu_g+\langle(Id-\n\left(\mc{L}-\mc{G}\mc{L}^{-1}\o{\mc{G}}\right)^{-1}\n^*)A,A\rangle,
\end{align*}
where the last equality follows from Lemma \ref{lemma6}, and that $\mc{L}-\mc{G}\mc{L}^{-1}\o{\mc{G}}$ is symmetric. 
\end{proof}

\begin{prop}\label{prop1}
If $\dim M=1$, then 	
\begin{align*}
	\frac{\p^2 E^{1/2}}{\p z\p\b{z}} 
 &=	\frac{1}{2}\frac{1}{E^{1/2}}\left(\int_M (\Box+1)^{-1}(|A|^2)d\mu_g+\langle\frac{1}{2}|du|^2(|du|^2+\Delta)^{-1}A, A\rangle\right),
\end{align*}
where $\Box=-\phi_{v\b{v}}\p_v\p_{\b{v}}$ and $|A|^2=|A_{z\b{v}}^v|^2(\frac{1}{2}|du|^2)$ is a smooth function on $(z, v)=(z,u(z,x))$.
If we take the arc-length parametrization at $z=z_0$, i.e. $\frac{1}{2}|du|^2(z_0)=1$, then the first and the second variations  of geodesic length function are given by
\begin{align*}
	\frac{\p\ell(z)}{\p z}|_{z=z_0}=\frac{1}{2}\langle A,du\rangle
\end{align*}
and
 \begin{align*}
 \frac{\p^2\ell(z)}{\p z\p\b{z}}|_{z=z_0}=\frac{1}{2}\left(\int_M (\Box+1)^{-1}(|A|^2)d\mu_g+\langle(2+\Delta)^{-1}A, A\rangle\right).
 \end{align*}
\end{prop}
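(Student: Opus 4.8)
\emph{Overall strategy and the curvature term.} The plan is to feed Theorem~\ref{thm2.2} into the elementary $\partial\bar\partial$-identity
$$\frac{\partial^2 E^{1/2}}{\partial z\partial\b z}=\frac{1}{2E^{1/2}}\left(\frac{\partial^2 E}{\partial z\partial\b z}-\frac{1}{2E}\,\frac{\partial E}{\partial z}\frac{\partial E}{\partial\b z}\right),$$
and to exploit that when $\dim M=1$ two things happen: $\n^2=0$, so $\Delta=\n\n^*$ on $A^1(M,u^*T\mc X_z)$; and every harmonic map from a circle is a constant-speed geodesic, so (after passing to the arc-length parametrization, which changes nothing) $c:=\frac12|du|^2$ is constant on $M$. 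With this, the curvature term is immediate: Lemma~\ref{lemma1}(v) reads $c(\phi)_{z\b z}=(\Box+1)^{-1}(|A^v_{z\b v}|^2)$ as a function on $\mc X_z$, and since the constant $c$ commutes with $(\Box+1)^{-1}$ we get $\frac12\int_M c(\phi)_{z\b z}|du|^2\,d\mu_g=\int_M(\Box+1)^{-1}(|A|^2)\,d\mu_g$, the first term of the claimed formula.

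\emph{Reducing the operator term.} Next I would simplify $\langle(\mathrm{Id}-\n(\mc L-\mc G\mc L^{-1}\o{\mc G})^{-1}\n^*)A,A\rangle$. In the arc-length parametrization, the harmonic map equation (\ref{1.5}) says $\n^*(du^{(1,0)})=0$, so $du^{(1,0)}=u^v_1\,dx\otimes\frac{\partial}{\partial v}$ is harmonic, hence $\n$-parallel on $M$; in particular $u^v_1\frac{\partial}{\partial v}$ is a parallel section of $u^*T\mc X_z$. Consequently $\mc G=c\cdot J$, where $J\colon u^*\o{T\mc X_z}\to u^*T\mc X_z$ is the parallel isometric bundle map carrying the parallel frame $\o{u^v_1}\frac{\partial}{\partial\b v}$ to $u^v_1\frac{\partial}{\partial v}$, and $J$ composed with its conjugate $\bar J$ is the identity. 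Hence $\mc G\mc L^{-1}\o{\mc G}=c^2\mc L^{-1}$, so $\mc L-\mc G\mc L^{-1}\o{\mc G}=\Delta(\Delta+2c)(\Delta+c)^{-1}$ on $(\Ker\Delta)^\perp$ (in agreement with Lemma~\ref{lemma6}). Since $\n$ commutes with functions of the Laplacian and $\n\n^*=\Delta$ on $A^1$, one computes $(\mathrm{Id}-\n(\mc L-\mc G\mc L^{-1}\o{\mc G})^{-1}\n^*)A=\mb H(A)+\frac{c}{\Delta+2c}(A-\mb H(A))$; pairing this against $A$, using $\frac{c}{\Delta+2c}=\frac12$ on $\Ker\Delta$ and self-adjointness, gives
$$\langle(\mathrm{Id}-\n(\mc L-\mc G\mc L^{-1}\o{\mc G})^{-1}\n^*)A,A\rangle=\tfrac12\|\mb H(A)\|^2+\left\langle\tfrac12|du|^2(|du|^2+\Delta)^{-1}A,A\right\rangle.$$

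\emph{The harmonic part and assembly.} It remains to compute $\|\mb H(A)\|^2$. Parallel transport of $T\mc X_z$ around a closed geodesic fixes the (parallel) tangent direction, hence is trivial, so $u^*T\mc X_z$ is a trivial flat line bundle over $M=S^1$; as $\dim M=1$ its harmonic $1$-forms are the parallel ones and span the line $\mb C\cdot du^{(1,0)}$, with $\|du^{(1,0)}\|^2=E$. Therefore $\mb H(A)=\frac1E\langle A,du\rangle\,du^{(1,0)}$, and by Theorem~\ref{thm2.1}, $\|\mb H(A)\|^2=\frac1E|\langle A,du\rangle|^2=\frac1E\frac{\partial E}{\partial z}\frac{\partial E}{\partial\b z}$. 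Substituting the three computed pieces into the $\partial\bar\partial$-identity for $E^{1/2}$, the term $\frac12\|\mb H(A)\|^2$ cancels $\frac1{2E}\frac{\partial E}{\partial z}\frac{\partial E}{\partial\b z}$ exactly, which yields the asserted formula. Finally, for a closed geodesic $|du|$ is constant, so by $(\ref{LE})$ one has $\ell=E^{1/2}$; the geodesic length statements are then the specialization of the formula at $z=z_0$, using that the normalization $\frac12|du|^2(z_0)=1$ gives $E(z_0)=1$, $|du|^2(z_0)=2$, $|A|^2|_{z_0}=|A^v_{z\b v}|^2$, and $\frac12|du|^2(|du|^2+\Delta)^{-1}|_{z_0}=(2+\Delta)^{-1}$; comparing the resulting second variation with \cite[Theorem 6.2]{AS} then isolates the extra term noted in the footnote.

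\emph{Main obstacle.} The step I expect to be hardest is the middle one: establishing $\mc G=c\cdot J$ with $J$ parallel and $J\bar J=\mathrm{Id}$, and carrying out the consequent reduction of $(\mc L-\mc G\mc L^{-1}\o{\mc G})^{-1}$ to the scalar resolvent $(\Delta+c)\big(\Delta(\Delta+2c)\big)^{-1}$ on $(\Ker\Delta)^\perp$. This is exactly the point where one-dimensionality of $M$ and the geodesic equation are used, and where the bookkeeping on $\Ker\Delta$ must be done carefully; once this reduction is in hand the remainder is routine algebra with self-adjoint operators.
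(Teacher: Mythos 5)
Your proposal is correct and follows essentially the same route as the paper's proof: the same $\p\b\p$-identity for $E^{1/2}$, the same reduction $\mc{G}\mc{L}^{-1}\o{\mc{G}}=(\tfrac12|du|^2)^2\mc{L}^{-1}$ (you get it from the parallel isometry $J$, the paper from $[\mc{L},\mc{G}]=0$ together with $\mc{G}\o{\mc{G}}=(\tfrac12|du|^2)^2$), and the same spectral evaluation of $\n(\mc{L}-\mc{G}\mc{L}^{-1}\o{\mc{G}})^{-1}\n^*A$, which you carry out by hand via the factorization $\Delta(\Delta+2c)(\Delta+c)^{-1}$ where the paper cites the eigenvector decomposition of Axelsson--Schumacher. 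The only slip is the normalization in the last step: under arc-length parametrization one has $\ell=E^{1/2}\ell_0^{1/2}$ with $\ell_0=\int_M d\mu_g$ and $E(z_0)=\ell_0$ (not $\ell=E^{1/2}$ and $E(z_0)=1$), but the two factors of $\ell_0^{1/2}$ cancel, so your final formulas for $\p\ell/\p z$ and $\p^2\ell/\p z\p\b z$ are unaffected.
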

\begin{proof}
	By the condition $\dim M=1$, we denote $g=g_{tt}dt\otimes dt$, then the harmonic equation (\ref{harmonic}) is reduced to
	\begin{align}\label{harmonicdim1}
	\n_t u^v_t=\p_t u^v_t-\Gamma^t_{tt}u^v_t+\phi_v (u^v_t)^2=0,	
	\end{align}
        where $\Gamma^t_{tt}=\frac{1}{2}\p_t\log g_{tt}$.
        It gives then
\begin{align}
\n_t(\frac{1}{2}|du|^2)=\n_t(g^{tt}\phi_{v\b{v}}u^v_t\o{u^v_t})=g^{tt}\phi_{v\b{v}}(\n_t u^v_t\o{u^v_t}+u^v_t\o{\n_t u^v_t})=0,	
\end{align}
which implies that $|du|^2$ is a constant on $M$ for each $z$. Also by (\ref{harmonicdim1}), one has
\begin{align*}
(g^{tt}\phi_{v\b{v}}u^v_t u^v_t \o{e^v})_{;tt}=	g^{tt}\phi_{v\b{v}}u^v_t u^v_t \o{e^v}_{;tt},
\end{align*}
which concludes that $\mc{L}\mc{G}=\mc{G}\mc{L}$ when acting on the element in $A^0(M, u^*\o{T\mc{X}_z})$, thus
\begin{align*}
\mc{G}\mc{L}^{-1}\b{\mc{G}}=\mc{L}^{-1}(\mc{L}\mc{G}-\mc{G}\mc{L})\mc{L}^{-1}\b{\mc{G}}+\mc{L}^{-1}\mc{G}\b{\mc{G}}=\mc{L}^{-1}(\frac{1}{2}|du|^2)^2,	
\end{align*}
where the last equality follows from 
$$\mc{G}\b{\mc{G}}=(g^{tt}\phi_{v\b{v}}u^v_tu^v_t)(g^{tt}\phi_{v\b{v}}\o{u^v_t}\o{u^v_t})=(g^{tt}\phi_{v\b{v}}u^v_t\o{u^v_t})^2=(\frac{1}{2}|du|^2)^2.$$
In $\dim M=1$, then $\n^2=0$, and
\begin{align}
\n\Delta=\n(\n\n^*+\n^*\n)=\n\n^*\n=\Delta\n,	
\end{align}
which implies that $\n \mc{L}=\mc{L}\n$ by $\mc{L}=\Delta+\frac{1}{2}|du|^2$ and noting that $|du|^2$ is constant. Thus
\begin{aligns}
\n\left(\mc{L}-\mc{G}\mc{L}^{-1}\o{\mc{G}}\right)^{-1}\n^*A	&=\n\left(\mc{L}-\mc{L}^{-1}(\frac{1}{2}|du|^2)^2\right)^{-1}\n^*A\\
&=\left(\mc{L}-\mc{L}^{-1}(\frac{1}{2}|du|^2)^2\right)^{-1}\n\n^*A\\
&=\left(\mc{L}-\mc{L}^{-1}(\frac{1}{2}|du|^2)^2\right)^{-1}\Delta A
\end{aligns}
by noting $\n A=0$ (see Lemma \ref{lemma5} (i)).
Further the eigenvector decomposition method
of \cite[Lemma 7.2]{AS} implies that the last term is
 \begin{align}\label{2.18}
 \left(\mc{L}-\mc{L}^{-1}(\frac{1}{2}|du|^2)^2\right)^{-1}\Delta A=A-\frac{1}{2}|du|^2(|du|^2+\Delta)^{-1}A-\frac{1}{2}\mb{H}(A)
 \end{align}
 
We substitute now (\ref{2.18}) into (\ref{2.19}), and use Lemma
\ref{lemma1} (v),
to find
\begin{aligns}\label{2.20}
  &\quad
  \frac{\p^2}{\p z\p\b{z}}E(z)
  \\
  &=\frac{1}{2}\int_M (\Box+1)^{-1}(\frac{|A|^2}{\frac{1}{2}|du|^2})|du|^2d\mu_g+\langle\frac{1}{2}|du|^2(|du|^2+\Delta)^{-1}A+\frac{1}{2}\mb{H}(A), A\rangle\\
	&=\int_M (\Box+1)^{-1}(|A|^2)d\mu_g+\langle\frac{1}{2}|du|^2(|du|^2+\Delta)^{-1}A, A\rangle+\frac{1}{2}\|\mb{H}(A) \|^2.
	\end{aligns}
By  Proposition \ref{prop1},  $u^v_tdt\otimes \frac{\p}{\p v}\in A^1(M,u^*T\mc{X}_z)$ is harmonic, which is unique up to a constant factor  since $\dim M=1$. Thus 
\begin{align}
\|\mb{H}(A)	\|^2=\left|\frac{1}{\|u^v_tdt\otimes \frac{\p}{\p v}\|}\langle A,u^v_tdt\otimes \frac{\p}{\p v}\rangle\right|^2 =\frac{1}{E}|\langle A,du\rangle|^2=\frac{1}{E}\left|\frac{\p E}{\p z}\right|^2,
\end{align}
where the last equality follows from Theorem \ref{thm2.1}.
The equality (\ref{2.20}) now becomes
 \begin{aligns}\label{squareroot}
 \frac{\p^2 E^{1/2}}{\p z\p\b{z}} &=\frac{1}{2}E^{-1/2}\left(\frac{\p^2}{\p z\p\b{z}}E-\frac{1}{2E}\left|\frac{\p E}{\p z}\right|^2\right)\\
 &=	\frac{1}{2}\frac{1}{E^{1/2}}\left(\int_M (\Box+1)^{-1}(|A|^2)d\mu_g+\langle\frac{1}{2}|du|^2(|du|^2+\Delta)^{-1}A, A\rangle\right).
 \end{aligns}
If we take the arc-length parametrization at $z=z_0$, i.e. $\frac{1}{2}|du|^2=1$ at $z=z_0$, denote $\ell_0:=\int_M d\mu_g$, then the geodesic length function is 
\begin{aligns}\label{Length-Energy}
\ell(z)&:=\int_M \sqrt{g^{tt}\phi_{v\b{v}}u^v_t\o{u^v_t}}d\mu_g=\int_M (\frac{1}{\sqrt{2}}|du|)d\mu_g\\
&=	\frac{1}{\sqrt{2}}|du| \ell_0=\left(\int_M \frac{1}{2}|du|^2d\mu_g\right)^{1/2}\ell_0^{1/2}\\
&=E^{1/2}\ell_0^{1/2},
\end{aligns}
 and $\ell(z_0)=\ell_0$. From  Theorem \ref{thm2.1} and (\ref{squareroot}), we obtain  the first and the second variations  of geodesic length function 
\begin{align}
  \label{apple}
  \frac{\p\ell(z)}{\p z}|_{z=z_0}=\left(\frac{1}{2}E^{-1/2}\ell_0^{1/2}\frac{\p E}{\p z}\right)|_{z=z_0}=\left(\frac{1}{2}\frac{\p E}{\p z}\right)|_{z=z_0}=\frac{1}{2}\langle A,du\rangle
\end{align}
and
 \begin{align}\label{S2length}
 \frac{\p^2\ell(z)}{\p z\p\b{z}}|_{z=z_0}=\frac{1}{2}\left(\int_M (\Box+1)^{-1}(|A|^2)d\mu_g+\langle(2+\Delta)^{-1}A, A\rangle\right).
 \end{align}
 \end{proof} 
\begin{rem}\label{rem}
  Note that the above formula
  (\ref{apple}), in the special case of Euclidean metric
  on the circle with $g^{tt}=1$, and $\frac{1}{2}|du|^2=
  g^{tt}u_t^v\overline{u_t^v}\phi_{v\bar v}=
  u_t^v\overline{u_t^v}\phi_{v\bar v}=1$ at $z_0$, takes the following
  form
  $$\frac{\p\ell(z)}{\p z}|_{z=z_0}=\frac{1}{2}\langle
  A,du\rangle=\frac{1}{2}\int A_{z\bar v\bar
    v}\overline{u_t^vu_t^v}g^{tt} dt=\frac{1}{2}\int A_{z\bar v\bar
    v}\overline{u_t^vu_t^v} dt=\frac{1}{2}\int_{\gamma_z} A_z.$$
  This agrees with the one given in \cite[Theorem 1.1]{AS}, where the last equality follows from \cite[Definition 3.2]{AS}.
 However  comparing (\ref{S2length}) with \cite[Theorem 6.2,  (38)]{AS}, we
 find there is a
 extra term
 $\frac{1}{4\ell(\gamma_s)}\int_{\gamma_s}A_i\cdot\int_{\gamma_s}A_{\b{j}}$
 in their formula. This minor error is due the following: from (\ref{Length-Energy}), the first variation is 
 \begin{align}
 \frac{\p\ell}{\p z}=\frac{1}{2}E^{-1/2}\ell_0^{1/2}\frac{\p E}{\p z},	
 \end{align}
and the second variation has two terms 
\begin{aligns}
 \frac{\p^2\ell(z)}{\p z\p\b{z}}|_{z=z_0}&=\left(\frac{1}{2}E^{-1/2}\ell_0^{1/2}\frac{\p^2 E}{\p z\p\b{z}}-\frac{1}{4}E^{-3/2}\ell_0^{1/2}\frac{\p E}{\p z}\frac{\p E}{\p \b{z}}\right)|_{z=z_0}\\
 &=\frac{1}{2}\frac{\p^2 E}{\p z\p\b{z}}-\frac{1}{4\ell_0}\frac{\p E}{\p z}\frac{\p E}{\p \b{z}}.
\end{aligns}
 So the term $-\frac{1}{4\ell_0}\frac{\p E}{\p z}\frac{\p E}{\p \b{z}}$ was lost in their computations. 
\end{rem}

\subsection{Plurisubharmonicity}\label{subsec2.3}
In this section, we will prove the logarithm of the energy $\log E(z)$ is strictly plurisubharmonic on Teichm\"uller space. 

\begin{lemma}\label{lemma7}
The operator 
\begin{align}
  \n \Delta^{-1} \n^*-\n(\mc{L}-{\mc{G}}\mc{L}^{-1}\o{\mc{G}})^{-1}\n^*
\end{align}
is
 non-negative when acting on $
 A^1(M, u^*T\mc{X}_z)$, i.e,
 $$
 \langle (\n \Delta^{-1} \n^*-\n(\mc{L}-{\mc{G}}\mc{L}^{-1}\o{\mc{G}})^{-1}\n^* )f, f
 \rangle\ge 0
 $$
 for any $f\in
 A^1(M, u^*T\mc{X}_z)$.
\end{lemma}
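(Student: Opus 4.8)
The plan is to write $T:=\mc{L}-\mc{G}\mc{L}^{-1}\o{\mc{G}}$ and reduce the statement to the elementary fact that $T\ge\Delta$ as symmetric operators on $A^0(M,u^*T\mc{X}_z)$ (which is exactly the content of Lemma~\ref{lemma6}) implies the reverse inequality for the inverses, $T^{-1}\le\Delta^{-1}$, on the relevant subspace. Once that is done, the lemma follows by sandwiching these inverses between $\n$ and $\n^*$.

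First I would do the bookkeeping of kernels and images. For $f\in A^1(M,u^*T\mc{X}_z)$ put $\psi:=\n^*f\in A^0(M,u^*T\mc{X}_z)$. If $e\in H=\Ker\n\cap\Ker\n^*$ then $\langle\psi,e\rangle=\langle\n^*f,e\rangle=\langle f,\n e\rangle=0$, so $\psi\in H^{\perp}$. On $A^0$ one has $\Delta=\n^*\n$, hence $\Ker\Delta=\Ker\n=H$ there, and since $\Delta$ is self-adjoint elliptic with closed range, $H^{\perp}=\mathrm{Im}\,\Delta$; thus $\Delta^{-1}\psi$ is well defined. By Lemma~\ref{lemma6}, $T$ is symmetric and non-negative with $\Ker T\subset H$, and since $T$ is a second-order self-adjoint elliptic operator ($\Delta$ plus an operator of non-positive order) it has closed range, so $\mathrm{Im}\,T=(\Ker T)^{\perp}\supset H^{\perp}\ni\psi$ and $T^{-1}\psi$ is well defined as well. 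Using the adjointness $\langle\n g,f\rangle=\langle g,\n^*f\rangle$ we get
\[
\langle\n\Delta^{-1}\n^*f,f\rangle=\langle\Delta^{-1}\psi,\psi\rangle,\qquad
\langle\n T^{-1}\n^*f,f\rangle=\langle T^{-1}\psi,\psi\rangle,
\]
so the lemma is equivalent to $\langle\Delta^{-1}\psi,\psi\rangle\ge\langle T^{-1}\psi,\psi\rangle$ for every $\psi\in H^{\perp}$.

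For this last inequality I would argue directly, avoiding the spectral theorem. Set $a=\Delta^{-1}\psi\in H^{\perp}$ and $b=T^{-1}\psi$, so that $\Delta a=\psi=Tb$. Expanding $\langle\Delta(a-b),a-b\rangle\ge0$ and using that $\Delta$ is self-adjoint together with $\Delta a=\psi$ gives
\[
\langle\psi,a\rangle-2\Re\langle\psi,b\rangle+\langle\Delta b,b\rangle\ge0.
\]
Now $T-\Delta=\tfrac12|du|^2-\mc{G}\mc{L}^{-1}\o{\mc{G}}\ge0$ on $A^0(M,u^*T\mc{X}_z)$ by Lemma~\ref{lemma6}, so $\langle\Delta b,b\rangle\le\langle Tb,b\rangle=\langle\psi,b\rangle$, a non-negative real number; substituting this bound yields $\langle\psi,a\rangle-\langle\psi,b\rangle\ge0$, i.e. $\langle\Delta^{-1}\psi,\psi\rangle\ge\langle T^{-1}\psi,\psi\rangle$, which is precisely what was needed.

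The only genuinely delicate point — and the step I would check most carefully — is the functional-analytic bookkeeping: verifying that $\n^*f\in H^{\perp}=\mathrm{Im}\,\Delta\subset\mathrm{Im}\,T$ so that $\Delta^{-1}$ and $T^{-1}$ act on the same element, and that $T$ genuinely has closed range with $\mathrm{Im}\,T=(\Ker T)^{\perp}$. With that in hand, the comparison $T\ge\Delta\Rightarrow T^{-1}\le\Delta^{-1}$ is the short computation above, and everything else is just the adjoint identities for $\n$ and $\n^*$ and Lemma~\ref{lemma6}.
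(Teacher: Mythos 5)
Your proof is correct and follows essentially the same route as the paper: both reduce the claim, via $\langle\n B\n^*f,f\rangle=\langle B\n^*f,\n^*f\rangle$, to the operator monotonicity $T\ge\Delta\ge0\Rightarrow T^{-1}\le\Delta^{-1}$ on $H^{\perp}$, with Lemma \ref{lemma6} supplying $T-\Delta=\tfrac12|du|^2-\mc{G}\mc{L}^{-1}\o{\mc{G}}\ge0$. Your expansion of $\langle\Delta(a-b),a-b\rangle\ge0$ is the same algebra as the paper's resolvent identity $\Delta^{-1}-T^{-1}=\Delta^{-1}(T-\Delta)T^{-1}$ unwound term by term; the only added value is your explicit check that $\n^*f\in H^{\perp}\subset\mathrm{Im}\,T$, a domain issue the paper passes over silently.
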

 \begin{proof}
 For any $f\in A^1(M, u^*T\mc{X}_z)$, we denote $e=\n^* f\in A^0(M, u^*T\mc{X}_z)$.
 Denote $D_1=\mc{L}-\mc{G}\mc{L}^{-1}\o{\mc{G}}$ and $D_2=\Delta$.
 So
  $D_1$ is non-negative and symmetric, and
\begin{align}\label{2.15}
\langle (D_1-D_2) \tilde{e}, \tilde{e}\rangle=\left\langle\left(\frac{1}{2}|du|^2-{\mc{G}}\mc{L}^{-1}\o{\mc{G}}\right)\tilde{e}, \tilde{e}\right\rangle \geq 0
\end{align}
for any $\tilde{e}\in A^0(M, u^*T\mc{X}_z)$,
by Lemma \ref{lemma6}, 
Since
	\begin{align*}
	D_2^{-1}-D_1^{-1}=D_2^{-1}(D_1-D_2)D_1^{-1},	
	\end{align*}
so
\begin{aligns}\label{2.16}
 \langle (D_2^{-1}-D_1^{-1})e,e\rangle &=\langle D_2^{-1}(D_1-D_2)D_1^{-1}e,e\rangle\\
 &=\langle (D_1-D_2)D_1^{-1}e, D_2^{-1}e\rangle\\
 &=\langle (D_1-D_2)D_1^{-1}e,(D_1^{-1}+D_2^{-1}(D_1-D_2)D_1^{-1})e\rangle\\
 &=\langle(D_1-D_2)D_1^{-1}e, D_1^{-1}e\rangle\\
 &\quad+\langle D_2^{-1}(D_1-D_2)D_1^{-1}e,(D_1-D_2)D_1^{-1}e\rangle\\
 &\geq 0,
\end{aligns}
where the last inequality holds by (\ref{2.15}). 
From (\ref{2.16}), we get
\begin{align*}
	\langle(\n \Delta^{-1} \n^*-\n(\mc{L}-{\mc{G}}\mc{L}^{-1}\o{\mc{G}})^{-1}\n^*)f,f\rangle&=\langle( \Delta^{-1}-(\mc{L}-{\mc{G}}\mc{L}^{-1}\o{\mc{G}})^{-1})\n^*f,\n^*f\rangle\\
	&=\langle( \Delta^{-1}-(\mc{L}-{\mc{G}}\mc{L}^{-1}\o{\mc{G}})^{-1})e,e\rangle\geq 0. 
\end{align*}
 \end{proof}
From Lemma \ref{lemma8}, \ref{lemma9}, \ref{lemma7} and Theorem
\ref{thm2.2},
we conclude that
\begin{aligns}\label{2.13}
	\frac{\p^2}{\p z\p\b{z}}E(z)&=\frac{1}{2}\int_M c(\phi)_{z\b{z}}|du|^2d\mu_g+\langle(Id-\n\left(\mc{L}-\mc{G}\mc{L}^{-1}\o{\mc{G}}\right)^{-1}\n^*)A,A\rangle\\
	&=\frac{1}{2}\int_M c(\phi)_{z\b{z}}|du|^2d\mu_g+\langle(\Delta^{-1}\Delta-\n\Delta^{-1}\n^*)A,A\rangle\\
	&\quad+\langle(\n\Delta^{-1}\n^*-\n\left(\mc{L}-\mc{G}\mc{L}^{-1}\o{\mc{G}}\right)^{-1}\n^*)A,A\rangle+\langle\mb{H}(A),A\rangle\\
	&\geq \frac{1}{2}\int_M c(\phi)_{z\b{z}}|du|^2d\mu_g+\|\mb{H}(A)\|^2.
\end{aligns}
 Note that $u^v_ldx^l\otimes\frac{\p}{\p v}$ is  harmonic (see Proposition \ref{prop1}), so 
\begin{aligns}\label{2.14}
\|\mb{H}(A)\|^2&\geq \frac{1}{\|u^v_ldx^l\otimes \frac{\p}{\p v}\|^2}|\langle A, u^v_ldx^l\otimes \frac{\p}{\p v}\rangle|^2\\
&=\frac{1}{E}\left|\int_M A_{z\b{v}\b{v}}\b{u}^v_i \b{u}^v_j g^{ij}d\mu_g\right|^2=	\frac{1}{E}\frac{\p E}{\p z}\frac{\p E}{\p\b{z}},
\end{aligns}
where the last equality follows from Theorem \ref{thm2.1}.
Substituting (\ref{2.14}) into (\ref{2.13}), we obtain
\begin{align*}
\frac{\p^2}{\p z\p\b{z}}\log E(z) &=-\frac{1}{E^2}\frac{\p E}{\p z}	\frac{\p E}{\p\b{z}}+\frac{1}{E}\frac{\p^2 E}{\p z\p\b{z}}\\
&\geq -\frac{1}{E^2}\frac{\p E}{\p z}	\frac{\p E}{\p\b{z}}+\frac{1}{E}\left(\frac{1}{E}\frac{\p E}{\p z}\frac{\p E}{\p\b{z}}+\frac{1}{2}\int_M c(\phi)_{z\b{z}}|du|^2d\mu_g\right)\\
&= \frac{1}{\|du\|^2}\int_M c(\phi)_{z\b{z}}|du|^2d\mu_g>0
\end{align*}
by Lemma \ref{lemma1} (vi) and noting that Teichm\"uller curve $\pi:\mc{X}\to \mc{T}$ is not infinitesimally trivial.
\begin{thm}\label{thm2.3}
Let $\pi:\mc{X}\to \mc{T}$ be Teichm\"uller curve over Teichm\"uller space $\mc{T}$. Let $(M^n, g)$ be a Riemannian manifold and consider the energy of the harmonic map from $(M^n,g)$ to $\mc{X}_z$,  $z\in \mc{T}$. 
Then the logarithm of energy $\log E(z)$ is a strictly plurisubharmonic function on Teichm\"uller space. In particular, the energy function is also strictly plurisubharmonic. 	
\end{thm}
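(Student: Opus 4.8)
The plan is to read off strict positivity of $\frac{\p^2}{\p z\p\b z}\log E(z)$ from the second variation formula (\ref{2.19}), combined with the operator inequalities of Lemmas \ref{lemma9} and \ref{lemma7} and the harmonicity of $du$. By the reduction at the start of Section \ref{sec2} it is enough to work on a one-dimensional holomorphic slice, with $A=A_z$ as in (\ref{defA}); I would begin from
\begin{align*}
\frac{\p^2}{\p z\p\b z}\log E(z)=\frac{1}{E}\frac{\p^2 E}{\p z\p\b z}-\frac{1}{E^2}\frac{\p E}{\p z}\frac{\p E}{\p\b z}
\end{align*}
and insert $\frac{\p E}{\p z}=\langle A,du\rangle$ from Theorem \ref{thm2.1} together with (\ref{2.19}) from Theorem \ref{thm2.2}.

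To bound the operator term in (\ref{2.19}) I would split, using Lemma \ref{lemma8},
\begin{align*}
Id-\n(\mc L-\mc G\mc L^{-1}\o{\mc G})^{-1}\n^*=(\Delta^{-1}\Delta-\n\Delta^{-1}\n^*)+(\n\Delta^{-1}\n^*-\n(\mc L-\mc G\mc L^{-1}\o{\mc G})^{-1}\n^*)+\mb H,
\end{align*}
where $\mb H$ is the harmonic projection. Lemma \ref{lemma9} gives non-negativity of the first bracket on $A^1(M,u^*T\mc X_z)$ and Lemma \ref{lemma7} of the second, so
\begin{align*}
\frac{\p^2 E}{\p z\p\b z}\geq\frac12\int_M c(\phi)_{z\b z}|du|^2\,d\mu_g+\|\mb H(A)\|^2.
\end{align*}

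The decisive input is then the harmonicity of $du$: by Proposition \ref{prop1} the $(1,0)$-part $u^v_l\,dx^l\otimes\frac{\p}{\p v}$ of $du$ is a harmonic section of $u^*T\mc X_z$, of norm-square $E$, so comparing the projection of $A$ onto all harmonic forms with its projection onto this single line and using Theorem \ref{thm2.1} gives $\|\mb H(A)\|^2\geq\frac1E|\langle A,du\rangle|^2=\frac1E\frac{\p E}{\p z}\frac{\p E}{\p\b z}$. Plugging this into the previous display and then into the logarithmic expansion, the $|\p E|^2$ terms cancel and we obtain
\begin{align*}
\frac{\p^2}{\p z\p\b z}\log E(z)\geq\frac{1}{\|du\|^2}\int_M c(\phi)_{z\b z}|du|^2\,d\mu_g,
\end{align*}
which is strictly positive by Lemma \ref{lemma1}(vi), a Teichm\"uller curve being never infinitesimally trivial. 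Since $t\mapsto e^t$ is increasing and convex this also yields strict plurisubharmonicity of $E$ itself.

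The part requiring the most care is the non-negativity statement of Lemma \ref{lemma7}, i.e. the operator comparison $\Delta^{-1}\geq(\mc L-\mc G\mc L^{-1}\o{\mc G})^{-1}$. This rests on $\frac12|du|^2-\mc G\mc L^{-1}\o{\mc G}\geq0$ from Lemma \ref{lemma6} (whose proof uses the pointwise Cauchy--Schwarz inequality (\ref{2.7})) and the resolvent identity $D_2^{-1}-D_1^{-1}=D_2^{-1}(D_1-D_2)D_1^{-1}$; one also needs $\Ker(\mc L-\mc G\mc L^{-1}\o{\mc G})\subset\Ker\Delta$ so that $\n W=-\n(\mc L-\mc G\mc L^{-1}\o{\mc G})^{-1}\n^*A$ follows from the identity $(\mc L-\mc G\mc L^{-1}\o{\mc G})(W)=-\n^*A$ of Lemma \ref{lemma5}. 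Granting these, the assembly above is complete.
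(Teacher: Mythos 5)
Your proposal is correct and follows essentially the same route as the paper: the same three-term splitting of $Id-\n(\mc L-\mc G\mc L^{-1}\o{\mc G})^{-1}\n^*$ via Lemma \ref{lemma8}, the same positivity inputs from Lemmas \ref{lemma9}, \ref{lemma7} and \ref{lemma6}, and the same lower bound $\|\mb H(A)\|^2\geq \frac1E\frac{\p E}{\p z}\frac{\p E}{\p\b z}$ coming from the harmonicity of $du$ and Theorem \ref{thm2.1}, leading to the bound by $\frac{1}{\|du\|^2}\int_M c(\phi)_{z\b z}|du|^2 d\mu_g>0$ via Lemma \ref{lemma1}(vi). No gaps.
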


By \cite[Lemma 3]{Sch1}, for any two positive functions $a, b$, one has 
\begin{align*}
(a+b)\sqrt{-1}\p\b{\p}\log (a+b)\geq a\sqrt{-1}\p\b{\p}\log a+b\sqrt{-1}\p\b{\p}\log b.
\end{align*}
Combining with the above inequality we have 
\begin{cor}
	The logarithm of a sum of the energy functions 
	$$\log \sum_{i=1}^{N}E_i(z)$$
	is also strictly plurisubharmonic. 
\end{cor}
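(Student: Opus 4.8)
The plan is to deduce this corollary directly from Theorem \ref{thm2.3} together with the Schumacher inequality \cite[Lemma 3]{Sch1} recalled just above, by induction on $N$. The base case $N=1$ is precisely the statement of Theorem \ref{thm2.3}. For the inductive step I would assume that $\log\sum_{i=1}^{N-1}E_i$ is strictly plurisubharmonic and write $\sum_{i=1}^{N}E_i=\left(\sum_{i=1}^{N-1}E_i\right)+E_N$. Each $E_i(z)$ is a smooth positive function on $\mc{T}$ (see Section \ref{sec2}; positivity is the standing assumption that makes $\log E_i$ meaningful), hence so is $a:=\sum_{i=1}^{N-1}E_i$, and applying \cite[Lemma 3]{Sch1} with this $a$ and $b:=E_N$ gives
\begin{align*}
\left(\sum_{i=1}^{N}E_i\right)\sqrt{-1}\p\b{\p}\log\left(\sum_{i=1}^{N}E_i\right)&\geq \left(\sum_{i=1}^{N-1}E_i\right)\sqrt{-1}\p\b{\p}\log\left(\sum_{i=1}^{N-1}E_i\right)\\
&\quad+E_N\sqrt{-1}\p\b{\p}\log E_N
\end{align*}
as $(1,1)$-forms on $\mc{T}$.

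Next I would observe that, by the inductive hypothesis, $\sqrt{-1}\p\b{\p}\log\left(\sum_{i=1}^{N-1}E_i\right)>0$, and by Theorem \ref{thm2.3}, $\sqrt{-1}\p\b{\p}\log E_N>0$, while the coefficients $\sum_{i=1}^{N-1}E_i$ and $E_N$ are strictly positive functions; hence the right-hand side of the displayed inequality is a strictly positive $(1,1)$-form. Dividing by the positive scalar function $\sum_{i=1}^{N}E_i$ then yields $\sqrt{-1}\p\b{\p}\log\sum_{i=1}^{N}E_i>0$, which closes the induction and proves the corollary.

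There is no genuine obstacle here: the entire substance of the argument is already contained in Theorem \ref{thm2.3} and in the elementary convexity inequality \cite[Lemma 3]{Sch1}, which encodes the superadditivity of $t\mapsto t\,\sqrt{-1}\p\b{\p}\log t$ along a sum. The only point one has to monitor is that strictness survives each step of the induction, and this is automatic since every summand $E_i\sqrt{-1}\p\b{\p}\log E_i$ contributes a strictly positive term and none of the weights $E_i$ vanishes. (Alternatively one could bypass the induction and estimate $\p\b{\p}\log\sum_i E_i$ in one stroke, differentiating twice and bounding the mixed terms by Cauchy--Schwarz, but invoking \cite[Lemma 3]{Sch1} is shorter and cleaner.)
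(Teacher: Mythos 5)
Your argument is correct and is essentially the paper's own proof: the authors likewise deduce the corollary from Theorem \ref{thm2.3} together with the inequality $(a+b)\sqrt{-1}\p\b{\p}\log(a+b)\geq a\sqrt{-1}\p\b{\p}\log a+b\sqrt{-1}\p\b{\p}\log b$ from \cite[Lemma 3]{Sch1}, merely leaving the induction on $N$ implicit where you spell it out. Nothing further is needed.
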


As a corollary, we proved
\begin{cor}[\cite{Wol1,Wol2, Wol3}]\label{geodesic}
Let $\gamma(z)$ be a smooth family of closed geodesic curves over  Teichm\"uller space. Then both the length function $\ell(\gamma(z))$ and the logarithm of length function $\log \ell(\gamma(z))$ are strictly plurisubharmonic.	In particular, the geodesic length function is strictly convex along Weil-Petersson geodesics.
\end{cor}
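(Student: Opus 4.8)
The plan is to reduce Corollary~\ref{geodesic} to Theorem~\ref{main theorem} by specialising the construction to $\dim M=1$. A smooth family of closed geodesics $\gamma(z)\subset\mc{X}_z$, $z\in\mc{T}$, is the same datum as a smooth family of harmonic maps $u(z)\colon(S^1,g)\to(\mc{X}_z,\Phi)$ lying in one fixed free homotopy class, $(S^1,g)$ a fixed circle; each $\gamma(z)$ is non-constant, so $du(z)\not\equiv0$ and $E(z)>0$. First I would record, as in the proof of Proposition~\ref{prop2}, that for $\dim M=1$ the energy density $\tfrac12|du(z)|^2=g^{tt}\phi_{v\b{v}}u^v_t\o{u^v_t}$ is constant on $S^1$ for every $z$; hence energy and squared length are proportional,
\begin{equation*}
\ell(\gamma(z))=\Bigl(\int_{S^1}d\mu_g\Bigr)^{1/2}E(z)^{1/2},\qquad
\log\ell(\gamma(z))=\tfrac12\log E(z)+\tfrac12\log\int_{S^1}d\mu_g,
\end{equation*}
the factor $\int_{S^1}d\mu_g$ being a positive constant independent of $z$.

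With this in hand the two plurisubharmonicity statements are essentially free. By Theorem~\ref{main theorem}, $\log E$ is strictly plurisubharmonic on $\mc{T}$; since $\log\ell(\gamma(z))$ differs from $\tfrac12\log E(z)$ by a constant, it too is strictly plurisubharmonic. For $\ell(\gamma(z))$ itself I would use that $\log E$ strictly plurisubharmonic forces $E^{c}=e^{c\log E}$ strictly plurisubharmonic for every $c>0$: with $h=\log E$,
\begin{equation*}
\frac{\p^2 e^{ch}}{\p z^{\alpha}\p\b{z}^{\beta}}\,\xi^{\alpha}\b{\xi}^{\beta}
=e^{ch}\Bigl(c\,\frac{\p^2 h}{\p z^{\alpha}\p\b{z}^{\beta}}\xi^{\alpha}\b{\xi}^{\beta}+c^{2}\bigl|\tfrac{\p h}{\p z^{\alpha}}\xi^{\alpha}\bigr|^{2}\Bigr)>0,\qquad 0\neq\xi\in T\mc{T},
\end{equation*}
the first term being positive and the second nonnegative; taking $c=\tfrac12$ and multiplying by the constant $(\int_{S^1}d\mu_g)^{1/2}$ yields the claim for $\ell(\gamma(z))$. (Alternatively, the explicit formula of Proposition~\ref{prop2} displays $\p_{\alpha}\p_{\b{\beta}}E^{1/2}\,\xi^{\alpha}\b{\xi}^{\beta}$ as a sum of two nonnegative terms whose first summand, the integral over $S^1$ of $(\Box+1)^{-1}$ --- a positivity-improving operator --- applied to the nonnegative function $|\xi^{\alpha}A^{v}_{\alpha\b{v}}|^{2}\,\tfrac12|du|^2$, is strictly positive because $\xi^{\alpha}A^{v}_{\alpha\b{v}}$ represents the non-zero Kodaira--Spencer class of the Teichm\"uller curve.)

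The genuinely non-formal point --- and the step I expect to be the main obstacle --- is the final clause, strict convexity along Weil--Petersson geodesics: strict plurisubharmonicity does not imply it directly. For a function $f$ on $\mc{T}$ and a Weil--Petersson geodesic $\gamma(t)$ with $\dot\gamma=X+\o{X}$, $X=w^{\alpha}\tfrac{\p}{\p z^{\alpha}}$, the K\"ahler condition gives
\begin{equation*}
\frac{d^{2}}{dt^{2}}f(\gamma(t))=2\,\Re\!\bigl(w^{\alpha}w^{\beta}(\p_{\alpha}\p_{\beta}f-\Gamma^{\gamma}_{\alpha\beta}\p_{\gamma}f)\bigr)+2\,w^{\alpha}\o{w}^{\beta}\p_{\alpha}\p_{\b{\beta}}f,
\end{equation*}
and while the last term is positive by plurisubharmonicity, the covariant holomorphic Hessian in the first term has no sign. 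Dominating it forces one to use the precise expansion (\ref{WPG-0}) of the hyperbolic metrics $\Phi_t$ along the geodesic together with the splitting of $\n W$ under $T\mc{X}_z=T^{(1,0)}\oplus T^{(0,1)}$, i.e.\ the full strength of the argument proving Theorem~\ref{thm0.5}; applied with $\dim M=1$ it shows not only that $\ell(\gamma)^{2}=(\int_{S^1}d\mu_g)\,E$ is strictly convex (that is the case $\dim M=1$ of Theorem~\ref{thm0.5}) but --- as Wolpert first proved and as Wolf's formula \cite{Wolf} for $d^{2}\ell/dt^{2}$ makes explicit --- that $\ell(\gamma)$ itself is strictly convex, the relevant power being the $\dim M=1$ analogue of the threshold $c=5/6$ of Corollary~\ref{corY1}.
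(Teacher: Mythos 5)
Your treatment of the two plurisubharmonicity claims is correct and is essentially the paper's own route: $\ell(\gamma(z))=E(z)^{1/2}\ell_0^{1/2}$ with $\ell_0=\int_{S^1}d\mu_g$ a constant (this is (\ref{Length-Energy}), resting on the constancy of $|du|^2$ along the fibre when $\dim M=1$), and then $\log\ell=\tfrac12\log E+\mathrm{const}$ together with the elementary fact that $e^{ch}$ is strictly plurisubharmonic when $h$ is. Nothing to add there.

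The gap is in the final clause. You correctly observe that strict plurisubharmonicity does not by itself give convexity along Weil--Petersson geodesics, but your proposed repair does not close the gap. The machinery of Theorem \ref{thm0.5} and Corollary \ref{corY1} yields strict convexity of $E^c$ only for $c>5/6$, i.e.\ of $\ell^{2c}$ for $2c>5/3$; the length function itself is $E^{1/2}\ell_0^{1/2}$, and $1/2<5/6$, so that argument as it stands says nothing about $\ddot\ell$. Your suggestion that the ``$\dim M=1$ analogue of the threshold'' would drop to $1/2$ is plausible but is precisely what would have to be proved, and you do not prove it --- you defer to Wolpert and to Wolf's second-variation formula, which amounts to citing the conclusion. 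The paper closes this step differently and much more cheaply: it invokes Wolpert's comparison between the complex Hessian and the Weil--Petersson Riemannian Hessian of the geodesic length function,
\begin{align*}
\p\b{\p}\ell\ \leq\ \ddot{\ell}\ \leq\ 3\,\p\b{\p}\ell
\end{align*}
from \cite[Section 3]{Wol2}. Since the first part of the corollary gives $\p\b{\p}\ell>0$, the left-hand inequality immediately yields $\ddot\ell>0$ along any Weil--Petersson geodesic. To make your write-up complete you should either quote this comparison inequality or actually carry out the $\dim M=1$ refinement of the convexity estimate down to the exponent $1/2$; as written, the last clause is not established.
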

\begin{proof}
	From (\ref{Length-Energy}), the relation between the geodesic length function and the energy function is 
\begin{align}\label{LE}
\ell(\gamma(z))=E(z)^{1/2}\ell^{1/2}_0.	
\end{align}
From Theorem \ref{thm2.3}, one concludes that $\log \ell(\gamma(z))$ is  strictly plurisubharmonic, which implies that $\ell(\gamma(z))$ is also a strict plurisubharmonic function.
The strict convexity of geodesic length function along Weil-Petersson geodesics follows from the following comparison between the complex Hessian and WP Riemannian Hessian (see \cite[Section 3]{Wol2})
$$\p\b{\p}\ell\leq \ddot{\ell}\leq 3\p\b{\p}\ell.$$
\end{proof}
In the next section we shall prove a general convexity result
along Weil-Petterson geodesics for general harmonic maps $u: M\to
\mathcal X_z$ instead of a closed geodesic $u: S^1\to \mathcal X_z$.
\begin{defn}
A complex manifold $N$ is Stein if it admits a plurisubharmonic exhaustion (proper) function	$\mc{F}:N\to \mb{R}$.
\end{defn}

\begin{cor}
If $(u_0)_*: \pi_1(M)\to \pi_1(\mc{X}_{z_0})$ is surjective, then the energy function $E(z)$ is proper and strictly plurisubharmonic. In particular, Teichm\"uller space $\mc{T}$ is a complex Stein manifold. 
\end{cor}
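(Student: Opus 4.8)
The plan is to deduce the corollary by combining the strict plurisubharmonicity of $\log E$ proved in Theorem \ref{thm2.3} with Yamada's properness estimate, and then appealing to the definition of a Stein manifold recalled above. First I would record that under the hypothesis the energy never vanishes: if $E(z)=0$ for some $z$ then $du(z)\equiv 0$, so the harmonic map $u(z)$ is constant and $u_0$ is null-homotopic, contradicting the surjectivity of $(u_0)_*$ onto the nontrivial group $\pi_1(\mc{X}_{z_0})$ (recall $g\ge 2$). Hence $E>0$ on all of $\mc{T}$, so $\log E$ is a well-defined smooth function and Theorem \ref{thm2.3} applies to give that $\log E$ is strictly plurisubharmonic. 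Since
\[
\sqrt{-1}\,\p\b\p E=E\left(\sqrt{-1}\,\p\b\p\log E+\sqrt{-1}\,\p\log E\w\b\p\log E\right)\ge E\,\sqrt{-1}\,\p\b\p\log E>0,
\]
$E$ itself is strictly plurisubharmonic on $\mc{T}$.

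Next I would invoke \cite[Proposition 3.2.1]{Yamada}: when $(u_0)_*\colon\pi_1(M)\to\pi_1(\mc{X}_{z_0})$ is surjective, $E\colon\mc{T}\to\mathbb{R}$ is proper, i.e. each sublevel set $\{z\in\mc{T}:E(z)\le c\}$ is compact. Heuristically, if a sequence $z_k$ eventually leaves every compact subset of $\mc{T}$ then the surjectivity forces the harmonic maps $u(z_k)$ to stretch some essential loop as the conformal structure degenerates, whence $E(z_k)\to\infty$; I would simply cite Yamada's proof of this rather than reproduce it. Combined with the previous paragraph, this exhibits $E$ as a smooth, proper, strictly plurisubharmonic exhaustion function on $\mc{T}$.

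Finally, by the definition of a Stein manifold recalled above — equivalently, by Grauert's solution of the Levi problem applied to the strictly plurisubharmonic exhaustion function $E$ — Teichm\"uller space $\mc{T}$ is a complex Stein manifold, which completes the proof. I do not expect a genuinely new obstacle at this stage: all of the analytic content has already been carried out in Theorem \ref{thm2.3} (the subject of Subsection \ref{subsec2.3}) and in Yamada's properness estimate, and the only remaining ingredients are the elementary $\p\b\p$ identity above and the quotation of the Levi problem. If one wished to avoid quoting Grauert, the same conclusion follows directly from the paper's own definition of Steinness, since $E$ is literally a plurisubharmonic proper exhaustion function.
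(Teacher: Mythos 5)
Your proposal is correct and follows essentially the same route as the paper: strict plurisubharmonicity from Theorem \ref{thm2.3}, properness from Yamada, and Steinness from the definition of a Stein manifold via a proper strictly plurisubharmonic exhaustion. The only (minor) point the paper makes that you leave implicit is that the unconditional Steinness of $\mc{T}$ requires exhibiting at least one pair $(M,u_0)$ satisfying the hypothesis, which the paper does by taking $M=\mc{X}_{z_0}$ and $u_0=\mathrm{Id}$.
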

\begin{proof}
The first part follows from \cite[Proposition 3.1.1]{Yamada}. For the second part, we take $M=\mc{X}_{z_0}$ and $u_0=Id$, then $(u_0)_*: \pi_1(M)\to \pi_1(\mc{X}_{z_0})$ is surjective. In this case, the energy function is  proper and strictly plurisubharmonic, which implies that Teichm\"uller space is Stein. 	
\end{proof}

\section{Convexity of energy along Weil-Petersson geodesic}\label{sec3}

In this section, we will give a simple proof on the convexity of the energy along Weil-Petersson geodesics   \cite[Theorem 3.1.1]{Yamada}.

Let $\mc{M}_{-1}$ denote the space of hyperbolic metrics. Now suppose that $\sigma(t)$ is a Weil-Petersson geodesic parametrized by arc-length in Teichm\"uller space $\mc{T}=\mc{M}_{-1}/\mc{D}_0$, where $\mc{D}_0$ is the identity component  of the diffeomorphism group. Then we can lift $\sigma(t)$ horizontally to $\mc{M}_{-1}$. The lift $\Phi_t$ is itself a geodesic in $\mc{M}_{-1}$ with its tangent vector $h$ in $T_{\Phi_t}\mc{M}_{-1}$ satisfying the tracefree, transverse  condition (see e.g. \cite{Ficher} or \cite[(1)]{Yamada}),
\begin{align}
\text{Tr}_{\Phi}h=0\quad \delta_{\Phi}h=0.	
\end{align}
From \cite[(3.4)]{Wolf}, the metrics $\Phi_t$ satisfy 
\begin{multline}\label{WPG}
	\Phi_t=\phi_0dvd\b{v}+t(q dv^2+\o{q}d\b{v}^2)\\
	+t^2/2\left(\frac{2|q|^2}{\phi^2_0}-2(\Delta-2)^{-1}\frac{2|q|^2}{\phi^2_0}\right)\phi_0dvd\b{v}+O(t^4).
\end{multline}
 Here $qdv^2$ is a holomorphic quadratic form, $\phi_0dvd\b{v}$ is a
 hyperbolic metric. We denote by $\Phi$ the  matrix representation
 of $\Phi_t$ with respect to the basis $\{dv, d\b{v}\}$, i.e. 
 \begin{align}
 \Phi_t=(dv, d\b{v})\Phi\otimes\left(\begin{matrix}{}
 dv \\
d\b{v}	
\end{matrix}\right). 	
 \end{align}
Then 
\begin{align}
\Phi=\left(\begin{matrix}{}
 \Phi_{vv} & \Phi_{v\b{v}}\\
 \Phi_{v\b{v}} & \Phi_{\b{v}\b{v}}	
\end{matrix}
\right)	=\left(\begin{matrix}{}
 tq  & \frac{\phi_0}{2}+\frac{t^2}{2}\left(\frac{|q|^2}{\phi^2_0}+\alpha\right)\phi_0\\
 \frac{\phi_0}{2}+\frac{t^2}{2}\left(\frac{|q|^2}{\phi^2_0}+\alpha\right)\phi_0 & t\o{q}   
\end{matrix}
\right)+O(t^4),
\end{align}
where
\begin{align}
  \label{3.5}
\alpha=-(\Delta-2)^{-1}\frac{2|q|^2}{\phi^2_0}\geq\frac{1}{3}\frac{|q|^2}{\phi_0^2}> 0\quad a.e. ,\end{align} (see \cite[Lemma 5.1]{Wolf}). 

Let $(M^n, g)$ be a Riemannian manifold and consider the energy $E(u)$ of a smooth map $u$ from $(M^n, g)\to (\Sigma, \Phi_t)$. Let $\tilde{u}: M\to \Sigma$ be a fixed smooth map.  By Theorem \ref{1.1}, for each $t$, there exists a harmonic map $u(t)$ homotopic to $\tilde{u}$ and is unique unless its image is a point or a geodesic. Following the argument in \cite[Page 36]{Yamada}, the following function
\begin{align}
E(t):=E(u(t))	
\end{align}
is well-defined and smooth.  Note that the metric $\Phi_t\in A^0(\Sigma,\otimes^2 T^*\Sigma)$, so $u^*\Phi_t\in A^0(M,\otimes^2T^*M)$, and is given by
\begin{aligns}
u^*\Phi_t &=u^*\left(\Phi_{vv}dv\otimes dv+\Phi_{v\b{v}}dv\otimes d\b{v}+\Phi_{v\b{v}}d\b{v}\otimes dv+\Phi_{\b{v}\b{v}}d\b{v}\otimes d\b{v}\right)	\\
&=\left(\Phi_{vv}u^v_i u^v_j+\Phi_{v\b{v}}u^v_i\o{u^v_j}+\Phi_{v\b{v}}\o{u^v_i}u^v_j+\Phi_{v\b{v}}\o{u^v_i}\o{u^v_j}\right)dx^i\otimes dx^j\\
&=\Phi_{\alpha\beta}u^\alpha_{i}u^{\beta}_j dx^i\otimes dx^j,
\end{aligns}
where $\alpha,\beta\in \{v,\b{v}\}$ and $u^{\b{v}}_i:=\o{u^v_i}$. 
Recall  the trace
$\text{Tr}_g$ with respect to the Riemannian metric $g$.
Then the energy $E(t)$ can be expressed as 
\begin{align}\label{energy1}
E(t)=E(u(t))=\frac{1}{2}\int_M g^{ij}u^{\alpha}_i u^{\beta}_j \Phi_{\alpha\beta}d\mu_g=\frac{1}{2}\int_M \text{Tr}_g (u(t)^*\Phi_t)d\mu_g.	
\end{align}

\begin{thm}[{\cite[Theorem 3.1.1]{Yamada}}]\label{Yamada}
 Under the assumptions above, the function $E(t)$ is a strictly convex function in $t$, and hence the energy function $E: \mc{T}\to \mb{R}$ is strictly convex along any Weil-Petersson geodesic in $\mc{T}$.	
\end{thm}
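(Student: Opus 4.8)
The plan is to prove $\ddot E(t)>0$ for every $t$; since translation along the geodesic preserves the hypotheses, it suffices to treat $t=0$. Write $F(t,u)=\tfrac12\int_M\text{Tr}_g(u^*\Phi_t)\,d\mu_g$, so that $E(t)=F(t,u(t))$ with $u(t)$ the energy‑minimizing harmonic map into $(\Sigma,\Phi_t)$, and put $W=\dot u(0)$. Since $u(t)$ is a critical point of $F(t,\cdot)$ within its homotopy class, differentiating the relation $F_u(t,u(t))=0$ once more and using the symmetry of mixed second derivatives gives the envelope identity
\[
\ddot E(0)=F_{tt}(0,u_0)-F_{uu}(0,u_0)[W,W],
\]
where $F_{uu}(0,u_0)[\cdot,\cdot]$ is the index form of $u_0$, which is \emph{non-negative} because $u_0$ minimizes energy into a non-positively curved target (the same identity also drops out of a direct Taylor expansion of $\text{Tr}_g(u(t)^*\Phi_t)$, the $\ddot u(0)$-terms disappearing by harmonicity of $u_0$). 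Thus $-F_{uu}$ carries the \emph{unfavourable} sign, so a soft estimate cannot succeed: both terms must be computed and a cancellation displayed. For the first term, since $F$ is linear in $\Phi_t$, $F_{tt}(0,u_0)=\tfrac12\int_M\text{Tr}_g u_0^*(\ddot\Phi_0)\,d\mu_g$; by Wolf's expansion (\ref{WPG}), $\ddot\Phi_0=2\bigl(|q|^2\phi_0^{-2}+\alpha\bigr)\phi_0\,dv\,d\b v$ with $\alpha=-(\Delta-2)^{-1}\bigl(2|q|^2\phi_0^{-2}\bigr)$, so up to a fixed positive constant
\[
F_{tt}(0,u_0)=\int_M\bigl(|q|^2\phi_0^{-2}+\alpha\bigr)|du|^2\,d\mu_g ,
\]
and (\ref{3.5}) gives $\alpha\ge\tfrac13|q|^2\phi_0^{-2}$; in particular $\int_M\alpha\,|du|^2\,d\mu_g>0$ strictly, since $q\not\equiv0$ (the geodesic has unit speed), $\alpha>0$ a.e., and $|du|^2\not\equiv0$ (if the image of $u_0$ is a point the statement is vacuous, and if it is a closed geodesic it reduces to Corollary~\ref{geodesic}).

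Next I would treat the index term, and this is where the splitting enters. Because $(\Sigma,\Phi_0)$ has curvature $\equiv-1$,
\[
F_{uu}(0,u_0)[W,W]=\int_M\bigl(|\n W|^2+|du|^2|W|^2-|\langle du,W\rangle|_g^2\bigr)\,d\mu_g .
\]
Decompose $W=W'+\o{W'}$ along $u_0^*T_{\mb C}\Sigma=u_0^*T\Sigma\oplus u_0^*\o{T\Sigma}$; because $\Phi_0$ is K\"ahler the connection $\n$ preserves this splitting, so $|\n W|^2=2|\n W'|^2$ and the mixed terms drop. Differentiating the harmonic-map equation for $u(t)$ in $(\Sigma,\Phi_t)$ at $t=0$ — the variation of the Levi-Civita connection of $\Phi_t$ along $\dot\Phi=q\,dv^2+\o q\,d\b v^2$ being concentrated in the Christoffel symbol $\dot{\tilde\Gamma}^v_{\b v\b v}$, which the hyperbolic equation (\ref{1.1}) together with $\b\p q=0$ make explicit — one obtains, exactly as in Lemma~\ref{lemma5}, the identity
\[
\bigl(\mc L-\mc G\mc L^{-1}\o{\mc G}\bigr)(W')=-\n^*\hat A ,
\]
where $\hat A\in A^1(M,u_0^*T\Sigma)$ is the pull-back of the harmonic Beltrami differential of the geodesic direction (built from $q$), satisfying $\n\hat A=0$, and $\|\hat A\|^2=\tfrac12\int_M|q|^2\phi_0^{-2}|du|^2\,d\mu_g$.

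Solving for $\n W'$ as in (\ref{2.12}), substituting into the index form and integrating by parts exactly as in the proof of Theorem~\ref{thm2.2}, one computes $F_{uu}(0,u_0)[W,W]$ as a closed expression in $\hat A$ built from the operator $(\mc L-\mc G\mc L^{-1}\o{\mc G})^{-1}$, whose $|q|^2\phi_0^{-2}$-contribution matches the one in $F_{tt}(0,u_0)$ above. Those contributions cancel on subtracting, and one is left with an identity of the form
\[
\ddot E(0)=c\int_M\alpha\,|du|^2\,d\mu_g+\bigl\langle\bigl(\text{Id}-\n(\mc L-\mc G\mc L^{-1}\o{\mc G})^{-1}\n^*\bigr)\hat A,\hat A\bigr\rangle
\]
for a fixed $c>0$: the first term is strictly positive by the first paragraph, and the second is $\ge\|\mb H(\hat A)\|^2\ge0$ by Lemmas~\ref{lemma8}, \ref{lemma9} and \ref{lemma7}. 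Hence $\ddot E(0)>0$, which is the theorem. Tracking constants — using $\alpha\ge\tfrac13|q|^2\phi_0^{-2}$ together with $\|\mb H(\hat A)\|^2\ge|\langle\hat A,du\rangle|^2/\|du\|^2\ge(\dot E)^2/(2E)$, the analog of Theorem~\ref{thm2.1} — upgrades the bound to $E\ddot E\ge\tfrac16(\dot E)^2$, whence $(E^c)''\ge0$ for $c\ge5/6$ and $>0$ for $c>5/6$, i.e. Corollary~\ref{corY1}.

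I expect the main obstacle to be the computation underlying the second and third paragraphs: correctly linearizing the harmonic-map equation along $\Phi_t$, solving it for $W$ in terms of $\hat A$, and organizing the integration by parts so that the $|q|^2\phi_0^{-2}$ terms coming from $F_{tt}$ and from $F_{uu}[W,W]$ genuinely cancel, leaving the manifestly positive $\int_M\alpha|du|^2$ together with a non-negative remainder — and, for Corollary~\ref{corY1}, tracking the numerical constants in these bounds. This is precisely the point at which Yamada's original argument broke down (a misapplied Schwarz inequality), and the extra positivity concealed in Wolf's second-order coefficient $\alpha$ is exactly what offsets the unfavourable sign of $-F_{uu}$.
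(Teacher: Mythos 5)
Your first paragraph reproduces the paper's setup exactly: the envelope identity $\ddot E(0)=F_{tt}(0,u_0)-\delta^2E(u_0)(W_0,W_0)$, the evaluation of $F_{tt}$ from Wolf's expansion (\ref{WPG}), and the recognition that strict positivity must come from $\alpha\ge\tfrac13|q|^2\phi_0^{-2}$ once the $|q|^2\phi_0^{-2}$ contributions cancel. The gap is that the single estimate the theorem hinges on, $\delta^2E(u_0)(W_0,W_0)\le\int_M|q|^2\phi_0^{-2}\,\mathrm{Tr}_g(u_0^*\Phi_0)\,d\mu_g$, is never established: your second and third paragraphs replace it with a computation that is only described ("one obtains, exactly as in Lemma~\ref{lemma5}\dots one computes\dots those contributions cancel") and that you yourself flag as the main obstacle. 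Worse, the identity you assert, $(\mc L-\mc G\mc L^{-1}\o{\mc G})(W')=-\n^*\hat A$, is not the verbatim analogue of (\ref{2.11}): there the operator $\mc G\mc L^{-1}\o{\mc G}$ arises by eliminating $\o V$, the vertical part of the \emph{independent} derivative $U_*(\p/\p\b z)$, which satisfies the source-free equation $\mc L(\o V)=\o{\mc G}(W)$. For a real parameter $t$ the object to eliminate is $\o{W'}$, the literal conjugate of $W'$, whose equation is the conjugate of $\mc L(W')-\mc G(\o{W'})=-\n^*\hat A$ and therefore carries its own source $-\n^*\o{\hat A}$; substituting back leaves an extra term $-\mc G\mc L^{-1}\n^*\o{\hat A}$ on the right. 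So the closed formula for $\delta^2E$, the exact cancellation of the $|q|^2\phi_0^{-2}$ terms, and the constant in $E\ddot E\ge\tfrac16(\dot E)^2$ used for Corollary~\ref{corY1} would all have to be reproved from scratch. Since $-\delta^2E$ enters with the unfavourable sign, none of the conclusion survives without this step.

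The paper obtains the needed bound in two lines, by the splitting of $\n W_0$ rather than by solving for $W_0$. Because $\dot\Phi_0=q\,dv^2+\o q\,d\b v^2$ has no $dv\,d\b v$ component, the index term is the cross term $\delta^2E(u_0)(W_0,W_0)=-\mathrm{Re}\int_M(\n_iW_0)^v\,q\,u^v_jg^{ij}d\mu_g$, which sees only the $(1,0)$-part of $\n W_0$. Cauchy--Schwarz therefore costs only $\tfrac12\int_Mg^{ij}(\n_iW_0)^v\o{(\n_jW_0)^v}\phi_0\,d\mu_g=\tfrac12\|\n W_0\|^2$ (for real $W_0$ the full norm is twice the $(1,0)$-part) plus $\tfrac12\int_M|q|^2\phi_0^{-2}\mathrm{Tr}_g(u_0^*\Phi_0)\,d\mu_g$; combining with the index-form inequality $\|\n W_0\|^2\le\delta^2E(u_0)(W_0,W_0)$, valid because the target is non-positively curved, and absorbing $\tfrac12\delta^2E$ into the left-hand side gives exactly the required bound. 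This factor of one half is precisely what Yamada's misapplied Schwarz inequality was missing, and it avoids linearizing the harmonic-map equation altogether; the estimate $(\dot E)^2\le 6E\ddot E$ for Corollary~\ref{corY1} then follows from a separate Cauchy--Schwarz applied to $\dot E=\mathrm{Re}\int_Mg^{ij}u^v_iu^v_jq\,d\mu_g$, not from a harmonic projection.
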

\begin{proof}
  The metrics $\Phi_t$
in    (\ref{WPG}) and their first and second derivatives
at $t=0$ is
  \begin{align}\label{3.1}
  \Phi_0
  =\phi_0dvd\b{v}=\frac{\phi_0}{2}(dv\otimes d\b{v}+d\b{v}\otimes dv),\quad\dot{\Phi}_0=q dv^2+\o{q dv^2},\quad
 \ddot{\Phi}_0=\left(\frac{2|q|^2}{\phi^2_0}+2\alpha\right)\Phi_0. 	
\end{align}
By (\ref{energy1}), the energy at $t=0$ is 
\begin{align}\label{energy0}
E(0)=\frac{1}{2}\int_M g^{ij}u^v_i\o{u^v_j}\phi_0 d\mu_g. 	
\end{align}
	From \cite[Page 58, lemma 3.1.1]{Yamada}, the second derivative of $E(t)$ is given by
	\begin{align}\label{3.2}
	\frac{d^2}{dt^2}|_{t=0}E(t)&=\frac{1}{2}\int_M \text{Tr}_g (u^*_0\ddot{\Phi}_0)d\mu_g-\delta^2 E(u_0)(W_0,W_0),
	\end{align}
where $\text{Tr}_g (u^*_0\ddot{\Phi}_0):=g^{i\b{j}}(u_0)_{i}^{\alpha}(u_0)_j^{\beta}(\ddot{\Phi}_0)_{\alpha\b{\beta}}$, $\alpha,\beta\in \{v,\b{v}\}$, $W_0=\frac{d}{dt}|_{t=0}u(t)$ and 
\begin{align}\label{3.3}\delta^2E(u_0)(W_0,W_0)=-\frac{1}{2}\int_M (\dot{\Phi}_0)_{\alpha\beta}(\n W_0)^{\alpha}u^{\beta}_j g^{ij}d\mu_g.\end{align}
We substitute (\ref{3.1}) into (\ref{3.3}) and estimate
is from above Cauchy-Schwarz inequality,
\begin{aligns}\label{3.4}
	\delta^2E(u_0)(W_0,W_0)&=-\frac{1}{2}\int_M (\dot{\Phi}_0)_{\alpha\beta}(\n W_0)^{\alpha}u^{\beta}_j g^{ij}d\mu_g\\
	&=-\frac{1}{2}\int_M (q(\n W_0)^v u^v_jg^{ij}+\o{q(\n W_0)^vu^v_jg^{ij}})d\mu_g\\
&=-\text{Re}\int_M \left((\n_i W_0)^vqu^v_jg^{ij}\right)d\mu_g\\
&\leq \int_M\left (\frac{1}{2}g^{ij}(\n_iW_0)^v\o{(\n_jW_0)^v}\phi_0+\frac{1}{2}g^{ij}u^v_i\o{u^v_j}\frac{|q|^2}{\phi_0}\right)d\mu_g\\
&=\frac{1}{2}\int_M\left(g^{ij}(\n_iW_0)^v\o{(\n_jW_0)^{v}}\frac{\phi_0}{2}+g^{ij}\o{(\n_jW_0)^v}(\n_iW_0)^v\frac{\phi_0}{2}\right)d\mu_g\\
&\quad+\frac{1}{2}\int_M\left(g^{ij}u^v_i\o{u^v_j}\frac{\phi_0}{2}+g^{ij}\o{u^v_j}u^v_i\frac{\phi_0}{2}\right)\frac{|q|^2}{\phi_0^2}d\mu_g\\
&= \frac{1}{2}\|\n W_0\|^2+\frac{1}{2}\int_M\frac{|q|^2}{\phi_0^2}
\text{Tr}_{g}(u_0^*\Phi_0)d\mu_g\\
&\leq \frac{1}{2}\delta^2E(u_0)(W_0,W_0)+\frac{1}{2}\int_M\frac{|q|^2}{\phi_0^2}\text{Tr}_{g}(u_0^*\Phi_0)d\mu_g,
\end{aligns}
where the last inequality follows from  \cite[Page 15]{Jost} or \cite[Page 62]{Yamada}, $\|\n W_0\|^2\leq \delta^2E(u_0)(W_0,W_0)$. From (\ref{3.4}), we conclude that 
\begin{align}
	\delta^2E(u_0)(W_0,W_0)\leq \int_M\frac{|q|^2}{\phi_0^2}\text{Tr}_{g}(u_0^*\Phi_0)d\mu_g.
\end{align}
Substituting (\ref{3.1}) and (\ref{3.4}) into (\ref{3.2}) and using (\ref{3.5}), we get
 \begin{aligns}\label{3.8}
\frac{d^2}{dt^2}|_{t=0}E(t)&=\frac{1}{2}\int_M \text{Tr}_g (u^*_0\ddot{\Phi}_0)d\mu_g-\delta^2 E(u_0)(W_0,W_0)\\
&\geq \int_M \left(\frac{|q|^2}{\phi^2_0}+\alpha\right)\text{Tr}_g(u_0^*\Phi_0)d\mu_g- \int_M\frac{|q|^2}{\phi_0^2}\text{Tr}_{g}(u_0^*\Phi_0)d\mu_g\\
&=\int_M \alpha \text{Tr}_g(u_0^*\Phi_0)d\mu_g\\
&\geq \int_M \frac{|q|^2}{3\phi_0^2}\text{Tr}_g(u_0^*\Phi_0)d\mu_g> 0,
\end{aligns}
which completes the proof.
\end{proof}

As a corollary,  we prove 
\begin{cor}
The function $E(t)^{c}, c>5/6$ (resp. $c=5/6$) is strictly convex (resp. convex) along a Weil-Petersson geodesic. 	
\end{cor}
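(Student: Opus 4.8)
\emph{Plan.} The idea is to turn the statement into a pointwise inequality relating $\dot E$, $\ddot E$ and $E$ along the geodesic, and then to combine two estimates: the refined lower bound for $\ddot E(0)$ extracted from the proof of Theorem~\ref{Yamada}, and an upper bound for $\dot E(0)^2$ obtained by Cauchy--Schwarz. Setting $f(t)=E(t)^{c}$ and using that $E(t)>0$ (which holds as soon as $u_0$ is not homotopic to a constant map, the standing assumption of Theorem~\ref{Yamada}), one has
\[
f''(t)=c\,E(t)^{c-2}\big((c-1)\dot E(t)^2+E(t)\ddot E(t)\big),
\]
so it suffices to prove $(c-1)\dot E(t)^2+E(t)\ddot E(t)\ge 0$, with strict inequality when $c>5/6$. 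Since an affine reparametrization of a Weil--Petersson geodesic is again a Weil--Petersson geodesic parametrized by arc length, it is enough to establish this at $t=0$.

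For the first derivative I would argue that, $u(t)$ being harmonic for $\Phi_t$, the interior variation of the energy vanishes and only the metric variation survives, so with $\dot\Phi_0=q\,dv^2+\o q\,d\b v^2$ as in \eqref{3.1},
\[
\dot E(0)=\tfrac12\!\int_M \text{Tr}_g(u_0^*\dot\Phi_0)\,d\mu_g=\text{Re}\!\int_M g^{ij}q\,u_i^v u_j^v\,d\mu_g .
\]
Diagonalizing $g$ at a point and using $|\sum(u_i^v)^2|\le\sum|u_i^v|^2$ gives the pointwise inequality $|g^{ij}u_i^v u_j^v|\le g^{ij}u_i^v\o{u_j^v}$; since $\phi_0\, g^{ij}u_i^v\o{u_j^v}=\text{Tr}_g(u_0^*\Phi_0)$, applying Cauchy--Schwarz against the measure $\text{Tr}_g(u_0^*\Phi_0)\,d\mu_g$ and recalling $2E(0)=\int_M\text{Tr}_g(u_0^*\Phi_0)\,d\mu_g$ from \eqref{energy1} yields
\[
\dot E(0)^2\ \le\ 2E(0)\cdot Q,\qquad Q:=\int_M\frac{|q|^2}{\phi_0^2}\,\text{Tr}_g(u_0^*\Phi_0)\,d\mu_g .
\]
On the other hand, the chain of inequalities displayed in \eqref{3.8} (from the proof of Theorem~\ref{Yamada}) already gives, with $\alpha$ as in \eqref{3.5}, $\ddot E(0)\ge\int_M\alpha\,\text{Tr}_g(u_0^*\Phi_0)\,d\mu_g\ge\tfrac13 Q$.

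Combining the two bounds gives $E(0)\ddot E(0)\ge\tfrac13 E(0)Q\ge\tfrac16\dot E(0)^2$. For $c=5/6$ this is exactly $(c-1)\dot E(0)^2+E(0)\ddot E(0)=-\tfrac16\dot E(0)^2+E(0)\ddot E(0)\ge 0$, i.e.\ convexity. For $c>5/6$ one gets $(c-1)\dot E(0)^2+E(0)\ddot E(0)\ge(c-\tfrac56)\dot E(0)^2\ge 0$, which is $>0$ unless $\dot E(0)=0$; and in that remaining case $E(0)\ddot E(0)\ge\tfrac13 E(0)Q>0$, since $q\not\equiv 0$ and $u_0$ non-constant force $Q>0$ and $E(0)>0$. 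Hence $f''(0)>0$, giving strict convexity.

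The calculations are essentially routine; the only point that must be arranged with care is that the two estimates dovetail with precisely the exponent $5/6$. This is why the Cauchy--Schwarz split has to be taken with the weight $|q|^2/\phi_0^2$ that also governs the lower bound \eqref{3.8} for $\ddot E(0)$ (the factor $\tfrac13$ there being Wolf's estimate $\alpha\ge\tfrac13|q|^2/\phi_0^2$ of \eqref{3.5}); a cruder pairing would produce a worse constant. The pointwise inequality $|g^{ij}u_i^v u_j^v|\le g^{ij}u_i^v\o{u_j^v}$ is the one small lemma to check, and it follows immediately by diagonalizing $g$.
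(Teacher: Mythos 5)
Your argument is correct and is essentially the paper's own proof: the same two estimates, $\dot E(0)^2\le 2E(0)\int_M\frac{|q|^2}{\phi_0^2}\mathrm{Tr}_g(u_0^*\Phi_0)\,d\mu_g$ (the paper derives it via an explicit non-negative quadratic in an auxiliary parameter $\lambda$ rather than your pointwise diagonalization plus integral Cauchy--Schwarz, but it is the same inequality) and $\ddot E(0)\ge\frac13\int_M\frac{|q|^2}{\phi_0^2}\mathrm{Tr}_g(u_0^*\Phi_0)\,d\mu_g$ from the proof of Theorem \ref{Yamada}, combined to give $\dot E^2\le 6E\ddot E$ and hence the exponent $5/6$. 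The only cosmetic difference is that you handle all $c\ge 5/6$ uniformly via $E\ddot E\ge\frac16\dot E^2$, whereas the paper splits off the case $c\ge1$ before substituting.
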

\begin{proof} The second derivative 
	\begin{align}\label{3.5-1}
\frac{d^2}{dt^2}E(t)^c=cE^{c-2}\left(\left(c-1\right)\left(\frac{dE}{dt}\right)^2+E\frac{d^2E}{dt^2}\right). 
	\end{align}
        If $c\geq 1$ then this
gives
\begin{align}\label{3.9}
	\frac{d^2}{dt^2}|_{t=0}E(t)^c\geq cE^{c-1}\frac{d^2E}{dt^2}|_{t=0}>0
\end{align}
        by Theorem \ref{Yamada}.   
Now we assume that $5/6\leq c<1$. From \cite[Page 57]{Yamada}, the first derivative of the energy is 
\begin{align}\label{3.6}
\frac{dE}{dt}|_{t=0}=\frac{1}{2}\int_M \text{Tr}_g \left(u_0^*\dot{\Phi}_0\right)	d\mu_g=\text{Re}\int_M g^{ij}u^v_i u^v_j q d\mu_g.
\end{align}
The following quadratic polynomial in $q$ is non-negative
 \begin{aligns}
      \int_M (u^v_i-\o{u^v_i}\o{q}\phi_0^{-1}\lambda)(\o{u^v_j}-u^v_j q\phi_0^{-1}\o{\lambda})g^{ij}\phi_0 d\mu_g	
      \ge
    0
  \end{aligns}
where $\lambda=\int_M g^{ij}u^v_i u^v_j qd\mu_g/\int_M
g^{ij}u^v_i\o{u^v_j}|q|^2(\phi_0)^{-1}d\mu_g$ and $t=0$
in $u=u(t)$. Thus
 \begin{aligns}
\left| \int_M g^{ij}u^v_i u^v_j q d\mu_g\right|^2 &\leq \int_M g^{ij}u^v_i\o{u^v_j}\phi_0d\mu_g\int_M g^{ij}u^v_i\o{u^v_j}\frac{|q|^2}{\phi_0}d\mu_g\\
&=2E\int_M\frac{|q|^2}{\phi_0^2}\text{Tr}_{g}(u_0^*\Phi_0)d\mu_g \\
&\leq 6E\frac{d^2E}{dt^2},
 \end{aligns}
 where the second equality holds by (\ref{energy0}), the last inequality follows from (\ref{3.8}).
Combining with (\ref{3.6}) shows that 
\begin{align}\label{3.7}
\left(\frac{dE}{dt}\right)^2\leq 6E\frac{d^2E}{dt^2}.
\end{align}
Substituting (\ref{3.7}) into (\ref{3.5}) and using Theorem \ref{Yamada}, we have 
\begin{aligns}\label{3.10}
  \frac{d^2}{dt^2}E(t)^c
  &=cE^{c-2}\left(\left(c-1\right)\left(\frac{dE}{dt}\right)^2+E\frac{d^2E}{dt^2}\right)\\
		 &\geq cE^{c-2}\left(\left(c-1\right)6E\frac{d^2E}{dt^2}+E\frac{d^2E}{dt^2}\right)\\
                 &=c(6c-5)E^{c-1}\frac{d^2 E}{dt^2},
                           \end{aligns}
               and $\frac{d^2}{dt^2}E(t)^c =0$
               for $ c=5/6$ and 
               $   \frac{d^2}{dt^2}E(t)^c              >0$
               {for} $c\in (5/6,1)$ at $t=0$, where the second inequality holds since $c-1<0$ and (\ref{3.7}).  Combining (\ref{3.9}) with (\ref{3.10}), we complete the proof. 
\end{proof}

Another corollary is a positive answer to the Nielsen realization problem, which was answered by Kerckhoff \cite{Kerockhoff} long time ago. We say that a system of curves fills up the surface if the complement of the system is a union of disks.
\begin{cor}[{\cite[Theorem 5]{Kerockhoff}}]Any finite subgroup $G$ of the mapping class group of a surface $\Sigma$ can be realized as a isometry subgroup
of some hyperbolic metric on $\Sigma$.
\end{cor}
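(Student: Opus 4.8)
The plan is to build from $G$ a single function on Teichm\"uller space that is $G$-invariant, proper, and strictly convex along every Weil--Petersson geodesic, and then to take its minimum, which will be unique and therefore fixed by $G$. Recall that for a hyperbolic surface $\Sigma$ the mapping class group acts on $\mc{T}$ and the stabilizer of a point $z\in\mc{T}$ is exactly the isometry group of the hyperbolic surface $\mc{X}_z$; hence it suffices to produce one point of $\mc{T}$ fixed by all of $G$.

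First I would choose a finite system of simple closed curves $\gamma_1,\dots,\gamma_k$ filling $\Sigma$ and replace it by the $G$-orbit $\{g\cdot\gamma_i:\ g\in G,\ 1\le i\le k\}$, which again fills $\Sigma$ (any system containing a filling system fills) and is $G$-invariant by construction. Put
\begin{align*}
F(z)=\sum_{g\in G}\sum_{i=1}^{k}\ell\bigl(g\cdot\gamma_i(z)\bigr),\qquad z\in\mc{T},
\end{align*}
where $\ell(\gamma(z))$ denotes the length of the geodesic representative of $\gamma$ on $\mc{X}_z$. Since any $h\in G$ merely permutes the summands, $F(h\cdot z)=F(z)$. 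By Corollary \ref{geodesic} each summand $z\mapsto\ell(g\cdot\gamma_i(z))$ is strictly convex along every Weil--Petersson geodesic, hence so is the finite sum $F$; alternatively one may replace $\ell$ by $\ell^2$ and invoke Theorem \ref{Yamada} through the length--energy identity (\ref{LE}). Finally $F$ is proper, being the sum of the geodesic length functions of a filling system, which is a proper exhaustion of $\mc{T}$ \cite{Kerockhoff}.

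Next I would invoke geodesic convexity of the Weil--Petersson metric (Wolpert): any two points of $\mc{T}$ are joined by a Weil--Petersson geodesic lying in $\mc{T}$. Since $F$ is continuous and proper it attains a minimum; if $p_1\neq p_2$ were both minimizers, restricting $F$ to the Weil--Petersson geodesic from $p_1$ to $p_2$ and using strict convexity would produce a value of $F$ at an interior point strictly below $\min\{F(p_1),F(p_2)\}$, a contradiction. Hence $F$ has a unique minimum $p\in\mc{T}$. Because $F$ is $G$-invariant and its minimizer is unique, $h\cdot p=p$ for every $h\in G$, so $G$ lies in the stabilizer of $p$, i.e.\ in the isometry group of $(\Sigma,\Phi_p)$; this realizes $G$ as an isometry subgroup of a hyperbolic metric on $\Sigma$. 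The main obstacle is to assemble one function enjoying all three properties simultaneously, which is exactly what forces the passage to the $G$-orbit of a filling system together with the additivity of strict convexity; the other essential (and nontrivial) input is geodesic convexity of the incomplete Weil--Petersson metric, which is what upgrades strict convexity along geodesics into uniqueness of the minimizer, and hence into the sought fixed point of $G$.
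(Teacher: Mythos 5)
Your argument is correct and follows essentially the same route as the paper: build a $G$-invariant, proper, strictly Weil--Petersson-convex function from the lengths (or energies) of a $G$-orbit of a filling curve system, and take its unique minimum as the fixed point. The only differences are cosmetic --- you sum the lengths $\ell$ directly rather than the normalized squared lengths $\ell^2/\ell_0$, and you make explicit two inputs the paper leaves implicit, namely Wolpert's geodesic convexity of the Weil--Petersson metric and the identification of the stabilizer of a point of $\mc{T}$ with the isometry group of the corresponding hyperbolic surface.
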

\begin{proof}Take a collection $\gamma=\cup \gamma_i$ of curves which fill up $\Sigma$. Viewing $\gamma$ as a geodesic map
from the union of circles into $\mathcal X_z, z\in \mathcal T$, we can consider the energy function $E(\gamma(z))$ over $\mathcal T$. By (\ref{Length-Energy}), $E(\gamma_i(z))=\frac{\ell(\gamma_i(z))^2}{\ell(\gamma_i(0))}$ where $\ell(\gamma_i(0))$ is the geodesic length of $\gamma_i$ at some point in $\mathcal T$. Then the sum $E(\gamma(z))=\sum E(\gamma_i(z))$ is strictly convex along a Weil-Petersson geodesic, and proper on $\mathcal T$, since the geodesic length function is proper on $\mathcal T$ (Lemma 3.1 in \cite{Kerockhoff}). Hence $E(\gamma(z))$ has a unique minimum point. Now consider the filling family $G\gamma$. Since $G\gamma$ is $G$-invariant, $E(G\gamma(z))=\sum_{\alpha\in G\gamma} \frac{\ell(\alpha(z))^2}{\ell(\alpha(0))}$ is $G$-invariant.
Then this function has a unique minimum point $z_0$. This point should be invariant under $G$, i.e., $G$ acts as an isometry group on $z_0$.
\end{proof}

\end{document}